\def\cal{\mathcal}
\newtheorem{theorem}{Theorem}[section]
\newtheorem{proposition}{Proposition}[section]
\newtheorem{corollary}{Corollary}[section]
\newtheorem{lemma}{Lemma}[section]
\theoremstyle{remark}
\newtheorem{definition}{Definition}[section]
\newtheorem{question}{Question}[section]
\newtheorem{remark}{Remark}[section]
\newtheorem{notation}{Notation}[section]
\newtheorem{example}{Example}[]
\begin{document}

\title[On the visibility of the $+$achirality of alternating knots]{On the visibility of the $+$achirality of alternating knots}
\author[N. Ermotti, C. V. Quach Hongler, and C. Weber]{Nicola Ermotti, Cam Van Quach Hongler, and Claude Weber}

\begin{abstract}
This article is devoted to the study of prime alternating +achiral knots. In the case of arborescent knots, we prove in +AAA Visibility Theorem 5.1, that the symmetry is visible on a certain projection (not necessarily minimal) and that it is realised by a homeomorphism of order 4. In the general case (arborescent or not), if the prime alternating knot has no minimal projection on which +achirality is visible, we prove that the order of +achirality is necessarily equal to 4.
\end{abstract}

\maketitle
\tableofcontents

\section{Introduction}

In this article, the knots in $S^3$ and the projections in $S^2$ are assumed to be {\bf prime and oriented}.

Let $K$ be an alternating knot. It is well known that one can detect from a minimal projection $\Pi$ of $K$ many topological invariants (such as the genus and the crossing number (see for instance \cite{crow}, \cite{mu}) and many topological properties such as whether it is fibered or not (see for instance \cite {ga}). Therefore, it is natural to raise about {\bf achirality} (see Definition 2.1) the following

\begin{question}
 Is it possible to see the achirality of an alternating knot on one of its projections?
\end{question}

For $-$achirality, the answer is yes by \cite{erquwe}. In addition, there exists a minimal projection on which $-$achirality is visible.

For +achirality, if the knot is arborescent there is a projection on which +achirality is visible. However, the projection may not be minimal. More precisely, we prove the following theorem \ref{AAA}

\bigskip
\noindent{\bf Theorem 5.1 (+ AAA Visibility Theorem).} 
{\it Let $K \subset S^3$ be a prime, alternating, arborescent knot. Suppose further that $K$ is +achiral. Then there exists a projection $\Pi_K \subset S^2$ of $K$ (not necessarily minimal) and a diffeomorphism $\Phi: S^2 \longrightarrow S^2$ of order 4 such that:
\begin{itemize}
\item[1.] $\Phi$ preserves the orientation of $S^2$.
\item[2.] $\Phi$ preserves the orientation of the projection.
\item[3.] $\Phi(\Pi_K) = \widehat {\Pi}_K$ where $\widehat{\Pi}_K$ denotes the image of $\Pi_K$ by reflection through the projection plane.
\end{itemize}
}

An example of alternating, arborescent and +achiral knots (denoted +AAA knots) without minimal projection on which the +achirality is visible, is given by Dasbach-Hougardy in \cite{daho}. A detailed study of this family of knots is made below in \S 6. For each one, we can show, following our proof of +AAA Visibility Theorem 5.1, a non-minimal projection on which the +achirality is visible. The case of non-arborescent knots is more complex. However, we can prove the following theorem.

\medskip
\noindent {\bf  Order 4 Theorem 7.1.}
Let $K$ be an alternating +achitral knot without minimal  $+$achiral projection. Then the order of +achirality of $K$ is equal to~4.

\subsection{Organisation of the paper}

In \S2 we present our definition of visibility. It is based on twisted rotations in $\bf {R}^3$ and on a theorem of Feng Luo which states that a hyperbolic +achiral knot  
$K$ in $S^3$ is isotopic to a knot $K^*$ invariant by a twisted rotation of finite order. Roughly speaking, the +achirality is visible on a projection $\Pi$ if the reflection plane of the twisted rotation coincides with the projection plane.

In \S3 we explain how visibility according to this definition can be realised in the case of alternating knots. The starting point is Key Theorem 3.1 which is only a restatement of Menasco-Thisthlethwaite Flyping Theorem for the symmetry +achirality of alternating  knots. If $\Pi$ is a minimal projection of the +achiral knot $K$, there exists an orientation preserving homeomorphism of the projection plane which, with the flypes, transforms $\Pi$ into $\widehat {\Pi}$ where $\widehat {\Pi}$ designates the image of $\Pi$ by reflection through the projection plane.

If the flypes were not needed, we would be done. Indeed the homeomorphism of Key Theorem 3.1 is isotopic to a homeomorphism of finite order, which is then conjugate to a twisted rotation by Finite Order Lemma 3.1 and a theorem of Kerekjarto (\cite{coko}).

However, flypes are usually needed. Thus, the objective is to understand how flypes perform. For this,   we use a tool provided by Bonahon-Siebenmann which structures a knot projection. The salient points of their theory are recalled in the appendix. This involves dividing the projection plane into diagrams which can be of two kinds: twisted band diagrams or jewels. The partition of the plane is encoded by a tree called {\bf structure tree}. The {\bf +achirality isomorphism} (which is a composition of homeomorphisms and flypes) induces an automorphism on the structure tree. It is a fact that this automorphism has exactly one fixed point. There are two possibilities (Theorem 3.2): 
\begin{itemize}
\item[i)]The fixed point is an edge of the tree. It corresponds to a Haseman (Conway) circle invariant by the achirality isomorphism.
\item[ii)] The fixed point is a vertex, which necessarily corresponds to a jewel.
\end{itemize}

In \S4 we study the minimal projection for achiral arborescent alternating knots for which only case i) can occur. The fixed point of the automorphism corresponds to a Conway circle invariant by the achirality isomorphism. Proposition 4.1 describes the schematic types of minimal projections of such knots. It is already done in (Section 6 \cite{erquwe}) for the case of knots. However, the proof in the present paper is generalised to the case of links. It is based on the Murasugi decomposition of alternating links (\cite {quwe2}).

Finally, we have Theorem 4.1 which gives a necessary and sufficient condition for an arborescent alternating knot to be +achiral. The order of +achirality of a +AAA knot is~4.

Section \S5 deals with +AAA Visibility Theorem 5.1. The visibility of the +achirality of +AAA knots is proved by introducing the notion of depth for arborescent tangles (defined in {\S 5.1}). An outline of the proof is given in {\S 5.4} using a move called $\alpha$-move ({\S 5.3}) which provides the induction step.

In \S 6 we study in detail a family of knots that we call DH-knots (Definition~5.1). An example of this class of knots is described by Dasbach-Hougardy in  \cite{daho}. They are not -achiral but +achiral without  minimal projection on which +achirality is visible. Following our constructive proof, using the $\alpha$-move, we can exhibit for these knots, a non-minimal projection in which +achirality is visible.

In \S7, we prove Order 4 Theorem 7.1 which states that a +achiral alternating knot which has no minimal achiral projection, has its +achirality order equal to~4. By Theorem 5.1, the proof consists essentially in the analysis of the situation where the fixed point of the automorphism of the structure tree is a jewel.

 Appendix (\S8) recalls the canonical decomposition of the projection of a knot.

\subsection{Some comments on terminology}
(1) This article is written with the intention of being self-contained. 
We have therefore included full proofs of facts known but not always stated in the appropriate form. For instance, Finite Order Lemma 3.1 and a result on rational tangles. However, we advise the reader to return to \cite{erquwe} for more details if necessary.

\smallskip
\noindent (2) We emphasise that our point of view is strictly two-dimensional, since we consider projections of alternating links. There are no hidden Conway spheres as pointed out in \cite{thi}. Conway circles are necessarily boundaries of alternating diagrams and are called Haseman circles.

\smallskip
\noindent (3) For the same reason, our terminology ``arborescent" is based exclusively on the class of alternating knot projections. Thanks to Menasco-Thistlethwaite Flyping Theorem  (\cite{meth}), our definition is intrinsic, i.e. it only depends on the knot type. 

\smallskip
\noindent (4) A link projection is the image of a link in $S^3$ by a linear and generic projection (in the sense of Reidemeister) on $S^2$. The term ``diagram" will be used to designate a different object, defined in the appendix.

\subsubsection*{Acknowledgements} We are grateful to the referees for their careful reading of our manuscript. Their insightful comments were a great help in writing this version..

\section{On the visibility of the symmetries of a knot}

\subsection{The group of symmetries of a knot and twisted rotations\\ in 3-space}

Let $K \subset S^3$ be a knot in $S^3$. Let $\pi_0\mathrm{Diff}(S^3, K)$ be the group of isotopy classes of diffeomorphisms $f: S^3 \rightarrow S^3$ such that $f(K) = K$. By definition, it is {\bf the group of symmetries} of the knot $K$. 

It is well known that one can replace diffeomorphisms with homeomorphisms in the definition above. Isotopy classes produce isomorphic quotient groups.
\begin{definition} \
\begin{enumerate}
\item
A knot $K$ is {\bf $+$achiral} if there exists a {\bf mirror homeomorphism} $\Psi: (S^3, K) \rightarrow (S^3, K)$ which reverses the orientation of $S^3$ but preserves the orientation of $K$.

\item A knot $K$ is {\bf $-$achiral} if there exists a {\bf mirror homeomorphism} $\Psi: (S^3, K) \rightarrow (S^3, K)$ which reverses the orientation of both $S^3$ and~$K$.
\end{enumerate}
\end{definition}
\begin{theorem}[Feng Luo \cite{luo})]
Suppose that $K$ is hyperbolic. Then the subgroup $C_k$ of $\pi_0\mathrm{Diff}(S^3, K)$ of symmetries which preserve the knot orientation is a cyclic subgroup of order $k$.
\end{theorem}

Suppose that $K$ is +achiral. Then the order $k$ of $C_k$ is even. Let $\omega$ be a generator of this cyclic group. Then $\omega$ reverses the orientation of $S^3$. Write $k = 2^{\mu} u$ with $\mu \geq 1$ and $u$ odd. Then $\omega^u$ also reverses the orientation of $S^3$ and is of order $2^{\mu}$. 

\begin{definition}
The order of +achirality of $K$ is $2^{\mu}$.
\end{definition}

For hyperbolic knots, Feng Luo gives explicit diffeomorphisms of $S^3$ for each type of symmetry. In the case of +achirality, they are described in  (\cite{luo}, Corollary 5) and stated below as Theorem 2.2.

Let $m \geq 1$ be an integer. Consider a rotation $r_m$ in ${\bf R}^3$ with angle $2\pi / m$ and the reflection $R_e$ through a plane orthogonal to the axis of rotation.

\begin{definition}
A {\bf twisted rotation} $h_m$ of angle $2\pi / m$ is the composition $R_e \circ r_m$. 
\end{definition}
Let $n$ be the order of $h_m$. Two cases can occur:
\begin{enumerate} 
\item
$m$ is even: Then $n = m$.
\item $m$ is odd. Since $(h_m)^m = R_e$, the order $n$ of $h_m$ is $n=2m$. 
\end{enumerate} 
Feng Luo proves the following theorem, a beautiful achievement of the efforts of many topologists.

\begin{theorem}
(\cite{luo})
Let $K \subset S^3$ be a hyperbolic knot. Suppose that $K$ is +achiral. Then $K$ is isotopic to a knot $K^*$ which is  invariant by a twisted rotation $h_m$ of angle $2\pi / m$.
\end{theorem}

In fact, we have:

\begin{proposition}
The integer $m$ of $h_m$ is even.
\end{proposition}

\begin{proof} Suppose $m$ is odd. Since $(h_m)^m$ is equal to the reflection $R_e$, $K^*$ is invariant by $R_e$.
This implies that $K^*$ is a non-trivial connected sum. This contradicts the fact that $K^*$ is hyperbolic and therefore prime. 
\end{proof}
\begin{corollary} The order $n$ of the twisted rotation $h_m$ is equal to the order $m$ of the rotation.
\end{corollary}
Corollary 2.1 and some arguments presented in \cite{luo} provide proof of the following proposition. 
\begin{proposition}
The twisted rotation $h_m$ generates a cyclic group $C_m$ of order $m$ in $Diff(S^3 , K)$ and $C_m$ is sent isomorphically on the cyclic subgroup $C_k$ in $\pi_0Diff(S^3 , K)$.
\end{proposition}

Corollary 2.1 and Proposition 2.2 imply the useful following fact:

\medskip
\noindent {\bf Fact.} Since $m=k=2^{\mu}u$, we obtain the order of +achirality $2^\mu$ with $\mu \geq 1$ from the order $n=m$ of the twisted rotation.
\medskip

We thank the referees for drawing our attention to the fact that in the original definition of a twisted rotation, the integer $m$ could be odd. Fortunately, this case does not occur for +achiral hyperbolic knots. So, what might have been a small gap in Feng Luo's article does not matter.

\subsection{Visibility}
Let $K$ be a hyperbolic $\pm$achiral knot with a projection $\Pi$. 
We now define the {\bf achirality visibility} of a knot:

\begin{definition} 
A projection $\Pi$ of a $\pm$achiral knot $K$ on $S^2$ in the ambient space $\bf R^3$ makes {\bf achirality visible} if $\Pi$ is invariant by a twisted rotation such that the mirror plane of the reflection $R_e$ is the projection plane $S^2$.
\end{definition}

\begin{definition} 
A projection $\Pi$ of a $\pm$achiral knot $K$ on a 2-sphere $S^2$ in the ambient space $\bf R^3$ makes the {\bf achirality visible} if $\Pi$ is invariant by a twisted rotation such that the intersection of $S^2$ with the mirror plane of the reflection $R_e$ is a great circle.
\end{definition}

Although Definitions 2.4 and 2.5 are different, if a projection $\Pi$ on $S^2$ makes achirality visible in the sense of Definition 2.5, we can construct another diagram $\Pi'$ from $\Pi$ that makes achirality visible in the sense of Definition 2.4. 

For our purposes, we will use the notion of visibility of achirality from Definition 2.5.

\begin{definition} A projection of a $\pm$achiral knot $K$ which makes visible the\linebreak $\pm$achirality is called a {\bf $\pm$achiral projection} of $K$.
\end{definition}

\begin{figure}[h!]
\includegraphics[scale=0.15]{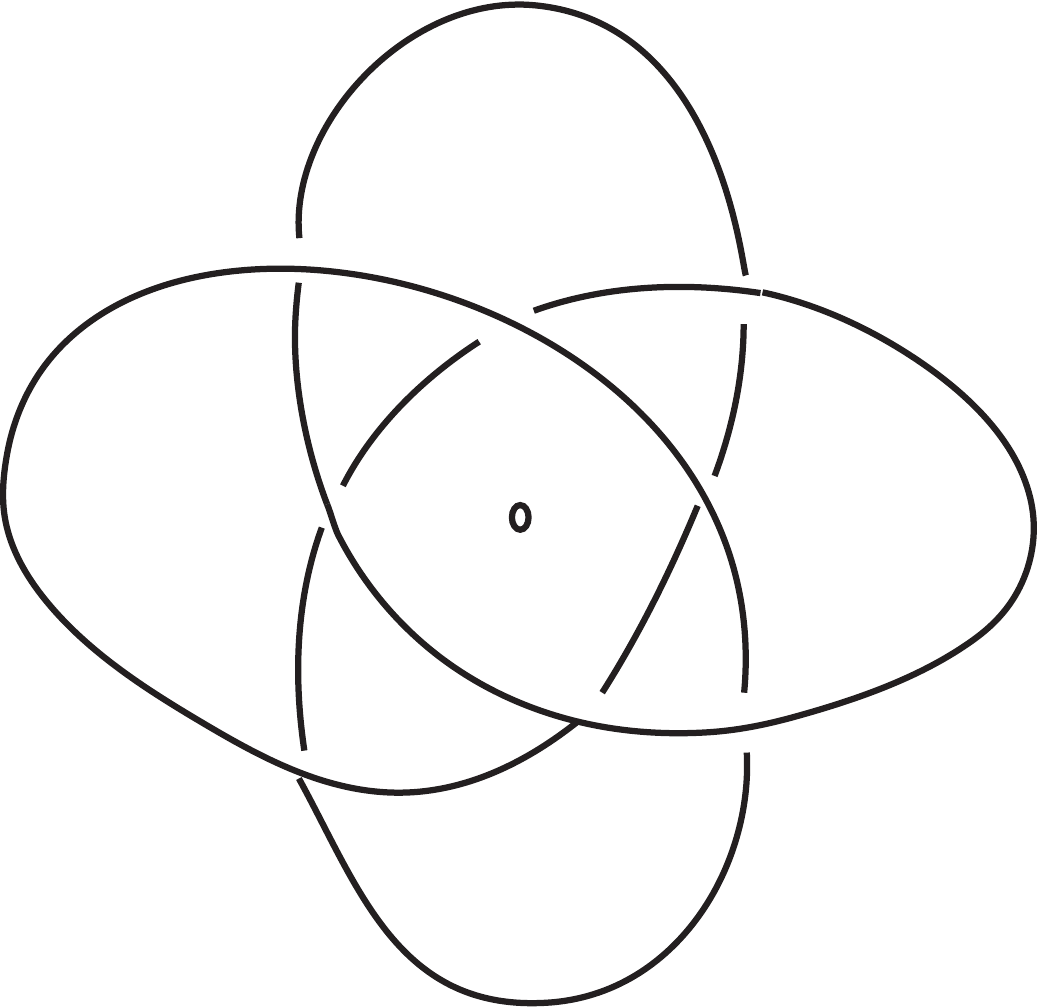}
\caption{Figure-eight knot.}
\end{figure}

\begin{example}
Fig.1 presents a +achiral projection of Figure-eight knot, as defined in Definition 2.4.
\end{example}

\section{Automorphism of the structure tree for an achiral knot}

\subsection{Key Theorem 3.1 and automorphism of the structure tree}
For basic facts about the canonical Bonahon-Siebenmann decomposition of a knot projection and about the structure tree, see Appendix. 

Fundamental to our analysis is {\bf Menasco-Thistlethwaite Flyping Theorem} which states that given any two reduced alternating diagrams $\Pi_1$ and $\Pi_2$ of an oriented prime alternating link $L$, $\Pi_1$ can be transformed to $\Pi_2$ by means of a finite sequence of flypes and orientation preserving homeomorphisms of $S^2$. The projections $\Pi_1$ and $\Pi_2$ are said to be {\bf isomorphic}.

Thus, our starting point is Key Theorem 3.1 which is only a reformulation of Menasco-Thisthlethwaite Flyping Theorem for $\pm$achiral alternating links.

\begin{theorem}[Key Theorem]
Let $K$ be a prime alternating oriented knot. Let $\Pi$ be an oriented minimal projection of $K$. Then $K$ is $\pm$achiral if and only if one can transform $\Pi$ into $\pm{\widehat {\Pi}}$ by a finite sequence of flypes and orientation preserving homeomorphisms of $S^2$.
\end{theorem}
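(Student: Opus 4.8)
\emph{Proof proposal.} The plan is to establish the two implications separately. The implication ``if $\Pi$ can be carried to $\pm\widehat\Pi$ by flypes and orientation‑preserving diffeomorphisms of $S^2$, then $K$ is $\pm$achiral'' is elementary: it uses only that flypes and orientation‑preserving diffeomorphisms of $S^2$ are realized ambiently in $S^3$, together with the fact that mirroring a diagram through its defining $2$‑sphere is realized by an orientation‑reversing diffeomorphism of $S^3$. The reverse implication is where the real input sits: it is deduced from the Menasco--Thistlethwaite Flyping Theorem (\cite{meth}), via the Tait‑type equivalence recalled above that for an alternating knot a projection is minimal if and only if it is reduced alternating.

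\emph{Flypability $\Rightarrow$ $\pm$achirality.} Suppose a finite sequence of flypes and orientation‑preserving diffeomorphisms of $S^2$ carries $\Pi$ to $\pm\widehat\Pi$. Every flype extends to an ambient isotopy of $S^3$ fixing the defining sphere $S^2$ setwise, and every orientation‑preserving diffeomorphism of $S^2$ extends to an orientation‑preserving diffeomorphism of $S^3$ preserving each side of $S^2$; composing them I obtain an orientation‑preserving diffeomorphism $F\colon S^3\to S^3$ carrying the picture of the diagram $\Pi$ (the curve $\Pi\subset S^2$ together with the over/under data, i.e. the knot $K$ lying above it) to the picture of $\pm\widehat\Pi$. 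Let $r\colon S^3\to S^3$ be the reflection through $S^2$: it reverses the orientation of $S^3$, fixes $S^2$ pointwise, and by definition of the mirror it interchanges the knot of $\Pi$ with the knot of $\widehat\Pi$, sending $+K$ to $+\widehat K$ and hence $\pm\widehat K$ back to $\pm K$. Then $\Psi:=r\circ F$ is an orientation‑reversing diffeomorphism of $S^3$ with $\Psi(K)=K$, preserving the orientation of $K$ in the $+$ case and reversing it in the $-$ case; that is, $K$ is $+$achiral, respectively $-$achiral.

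\emph{$\pm$achirality $\Rightarrow$ flypability.} Suppose $K$ is $\pm$achiral, so some diffeomorphism of $(S^3;K)$ reverses the orientation of $S^3$ and preserves (resp.\ reverses) that of $K$; equivalently, $\pm\widehat K$ coincides with $K$ as an oriented knot. Hence $\pm\widehat\Pi$, which a priori is an oriented projection of $\pm\widehat K$, is in fact an oriented projection of $K$. Moreover $\widehat\Pi$ is alternating (the mirror of an alternating diagram is alternating) and it is reduced, since $\Pi$ is minimal hence reduced; so $\Pi$ and $\pm\widehat\Pi$ are two reduced alternating oriented diagrams of the prime knot $K$. The Menasco--Thistlethwaite Flyping Theorem then provides a finite sequence of flypes and orientation‑preserving diffeomorphisms of $S^2$ turning $\Pi$ into $\pm\widehat\Pi$, as desired.

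\emph{Where the difficulty lies.} The mathematical substance is entirely absorbed by the Flyping Theorem and the equivalence ``minimal $\Leftrightarrow$ reduced alternating'', and the remaining work is essentially a bookkeeping of orientations. The main point to handle with care is the interplay of the three orientations involved --- that of $S^3$, that of the projection sphere $S^2$, and that of $K$ --- and in particular verifying that the sign ``$\pm$'' produced in the conclusion of each implication matches the sign ``$\pm$'' of the hypothesis. A secondary, routine point is to record that flypes and the chosen orientation‑preserving self‑diffeomorphisms of $S^2$ genuinely extend to orientation‑preserving diffeomorphisms of the ambient $S^3$, and that the Flyping Theorem, usually stated for unoriented diagrams up to orientation‑preserving homeomorphism of $S^2$, is compatible with knot orientations because flypes act naturally on them.
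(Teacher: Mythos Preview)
Your argument is correct and follows exactly the route the paper indicates: the paper does not spell out a proof but simply declares the Key Theorem to be ``a mere consequence of Menasco--Thistlethwaite's Flyping Theorem,'' and your two implications supply precisely the bookkeeping that this sentence compresses. Your care with the three orientations and with the ambient extension of flypes and $S^2$-diffeomorphisms is appropriate and matches what the paper is tacitly assuming.
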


\begin{figure}[h]
\includegraphics[scale=0.30]{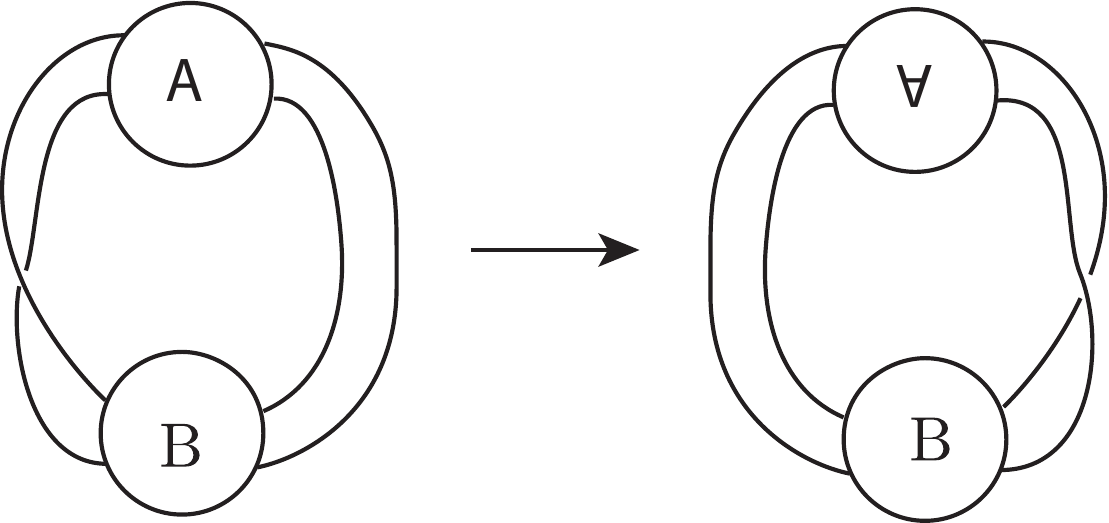}
\caption{A flype.}
\end{figure}

Let $K$ be an alternating achiral knot in $S^3$. Since the homeomorphisms and the flypes stated in Key Theorem 3.1 ``essentially commute", we can group the homeomorphisms together in a single homeomorphism called {\bf achirality homeomorphism}. Therefore, we can say that Theorem 3.1 asserts the existence of an isomorphism $\widetilde {\phi} : S^2  \longrightarrow S^2$ called {\bf achirality isomorphism} which is a composition of the achirality homeomorphism and a finite number of flypes and which is such that $\widetilde {\phi} (\Pi)=\widehat{\Pi}$.

Consider the structure trees ${\mathcal A}(K)$ and ${\mathcal A}(\widehat {K})$ (see \S 8.4). They only differ by the sign at the $\mathcal B$-vertices. As abstract trees without weights, they are canonically isomorphic.

Since this isomorphism induces an isomorphism ${\mathcal A}(K) \longrightarrow {\mathcal A}(\widehat{K})$, we can interpret it as an automorphism $\widetilde{\Phi}: {\mathcal A} (K) \longrightarrow {\mathcal A} (K)$ which sends a $\mathcal {B}$-vertex of weight $a$ to a $\mathcal {B}$-vertex of weight $-a$.

Lefschetz Fixed Point Theorem implies that $\widetilde{\Phi}$ has fixed points. We have proved in \cite{erquwe} the following result on the fixed point set of $\phi$.

\begin{theorem}
Let $K$ be a prime alternating achiral knot and $\widetilde{\Phi}: {\mathcal A} (K) \longrightarrow {\mathcal A} (K)$ be the automorphism induced by an achirality isomorphism of $K$. Then $\widetilde {\Phi}$ has exactly one fixed point.

There are two possibilities:
\begin{enumerate}
\item The fixed point is a vertex that corresponds to an invariant jewel.
\item The fixed point is the center of an edge whose two vertices are exchanged by $\widetilde {\Phi}$. Such an edge corresponds to a Haseman circle invariant by the achirality isomorphism and its vertices correspond both either to two twisted band diagrams or to two jewels.
\end{enumerate}
\end{theorem}

\subsection{Interpretation of Key Theorem 3.1 in terms of\\ twisted rotations}

Let $K$ again be an alternating +achiral knot and let $\Pi$ be a minimal projection of $K$. Then Theorem 3.1 implies that there exists a homeomorphism $\phi : (S^2 , \Pi) \longrightarrow (S^2 , \Pi)$ which preserves the orientation of both $S^2$ and $\Pi$ and which, after composition with the reflection $R_e$, realizes the +achirality of $K$, up to flypes.

2. Suppose no flype is needed. By Lemma 3.1 (see below in \S3.4) $\phi$ is isotopic, by an isotopy which leaves $\Pi$ invariant, to a finite order homeomorphism $ \phi^* : (S^2, \Pi) \longrightarrow (S^2 , \Pi)$. By Kerekjarto's theorem (\cite{coko}) $\phi^*$ is conjugate to a rotation, of course of finite order. So $R_e \circ \phi^*$ is conjugate to a twisted rotation which makes the + achirality visible.

3. Now suppose that some flypes are needed. Our approach is to reduce, as much as possible, the general case in case no flype is needed. The scheme of the reduction process is described as follows.

\subsection{Action of a homeomorphism on a decomposition of\\ the 2-sphere}

Suppose we have a finite decomposition $\mathcal{R} = \lbrace R_i \rbrace$ of the 2-sphere $S^2$ in connected planar surfaces, such that $R_i \cap R_j$ for $i \neq j$ is either empty or a common boundary component. Suppose further that we have a homeomorphism $g : S^2 \rightarrow S^2$ of finite order $n$ which respects the decomposition: for each index $i$, there exists an index $k(i)$ such that $g(R_i) = R_{k(i)}$. 

Consider some $R_i$ and the images $g(R_i) , g^2(R_i) , \dots , g^n(R_i) = R_i$. There are two possible cases:
\begin{itemize}
\item[i)] $g(R_i) , g^2(R_i) , \dots , g^n(R_i) = R_i$ are all distinct; we say that the orbit of $R_i$ is {\bf generic}.

\item[ii)] There exists an integer $m$ with $1 \leq m < n$ such that $g^m(R_i) = R_i$; we say that the orbit of $R_i$ is $\bf short$.
\end{itemize}

For a generic orbit, the restriction $g^n_ {\vert R_i}:R_i \rightarrow R_i$ is the identity. However for a short orbit, the restriction $g^m _{\vert R_i}:R_i \rightarrow R_i$ could be a non-trivial automorphism of $R_i$. In fact, the obstacles to an achiral projection come from the short orbits.

We now apply the above fact to a homeomorphism $\phi$ responsible of the $\pm$achirality symmetry of an alternating knot $K$. Let $\Sigma_i$ be a planar surface of the canonical decomposition of an alternating projection $\Pi$ of $K$. Let $\Gamma_i = \Pi \cap \Sigma_i$. Hence $(\Sigma_i , \Gamma_i)$ is a diagram of the decomposition. 

Let $\Sigma_j = \phi(\Sigma_i)$. Then $\phi(\Gamma_i) \subset \Sigma_j$ is flype equivalent in $\Sigma_j$ to $\Gamma_j$. If $i \neq j$, we transform $\Gamma_j$ by flypes into $\phi(\Gamma_i)$.

We continue these adjustments by flypes to $\phi^2(\Gamma_i)$ , \dots , $\phi^l(\Gamma_i)$ as long as it occurs in different diagrams. Now the following two cases can occur. 

i) The orbit of $\Sigma_i$ under $\phi$ is generic. We complete the adjustments when $l = n$. We do not need to make adjustments in the last step since $\phi^n \vert _{ \Sigma_i}$ is the identity. Therefore, the union of diagrams encountered in the sequence of modifications constitutes a subset of the projection $\Pi$, which is invariant by $\phi$.

ii) The orbit of $\Sigma_i$ is short. We put an end to the adjustments when $l = m$. But $\phi^m _{\vert \Sigma_i}: \Sigma_i  \rightarrow \Sigma_i$ could be a non-trivial automorphism of $\Sigma_i$. We know by hypothesis that $\Gamma_i$ is flype-equivalent to $\phi^m (\Gamma_i)$. But it is not certain that we can find a $\Gamma_i^*$ flype-equivalent to $\Gamma_i$ such that $\Gamma_i^* = \phi^m (\Gamma_i^*)$. If we can, we are done. If this is not possible, the involved knot has no minimal achiral projection. We will see in \S7 that this can only happen if the order of +achirality is equal to 4.

If this reduction process fails to produce a minimal projection on which +achirality is visible, there are two cases: 
\begin{itemize}
\item[1.] For arborescent knots, we can explicitly modify the projection in order to obtain a (non-minimal) projection that makes +achirality visible (\S5).
\item[2.] For non-arborescent knots, the case is open. 
\end{itemize}

\subsection {Finite order Lemma 3.1}

Although the following lemma is known by some specialists, we have not been able to find an explicit reference for the statement we need. Therefore, we include the lemma with proof for completeness. 
\begin{lemma}[Finite order lemma] Let $\Pi \subset S^2$ be a knot projection and let $\varphi : (S^2 , \Pi) \longrightarrow (S^2 , \Pi)$ be a homeomorphism. Then $\varphi$ is isotopic to a finite order homeomorphism (by an isotopy respecting $\Pi$).
\end{lemma}

\begin{proof}

Consider a projection $\Pi \subset S^2$. Associated with it is a cell decomposition of $S^2$ defined as follows:
\begin{itemize}
\item[0.] the 0-cells are the crossings.
\item[1.] the 1-cells are the arcs of $\Pi$ which connect two consecutive crossings.
\item[2.] the 2-cells are the closure of the regions of $S^2$ determined by $\Pi$.
\end{itemize}

The key to the proof is to triangulate the cell decomposition and then use the finitude properties of simplicial automorphisms. For this, we construct a simplicial complex $X$ as follows.

{\it0}. The set $X_0$ of vertices is a finite set in bijection with 0-cells. We denote such a bijection by $\eta_0$.
\begin{itemize}
\item[1.] For each 1-cell we add to $X_0$ a 1-simplex with extremities compatible with $\eta_0$. Let $X_1$ be the union of the 1-simplices. There is clearly a homeomorphism $\eta_1 : X_1 \longrightarrow \Pi$ which is restricted to $\eta_0$ on $X_0$. 
\item[2.] Let $C_2$ be a 2-cell  and let $\partial C_2$ be its boundary. The inverse image $(\eta_1)^{-1} (\partial C_2)$ is a cycle in the simplicial graph $X_1$.
One then forms a simplicial cone with the base this cycle and apex a new vertex. The cone contains 2-simplices and new 1-simplices. 
The union of $X_1$ and the cones obtained from all the 2-cells constitute the simplicial complex $X$.
Since each 2-cell is topologically a cone on its boundary (with apex some point in its interior), there is a homeomorphism $\eta : X \longrightarrow (S^2 , \Pi)$ which is restricted to $\eta_1$ on $X_1$. Admittedly, $\eta$ is not unique, but the choices involved do not have serious consequences. The homeomorphism $\eta$ triangulates the cell decomposition associated to $(S^2 , \Pi)$.
\end{itemize}
If in some particular cases, we do not get a simplicial complex of dimension 1 on $\Pi$, we can add some vertices to make it simplicial.

Now let $\varphi : (S^2 , \Pi) \longrightarrow (S^2 , \Pi)$ be the given homeomorphism. It induces a permutation of the 0-cells and of the 2-cells. Thus $\eta^{-1} \circ \varphi \circ \eta$ induces a permutation of the 0-simplices of $X_0$ and of the apexes of the cones. This permutation generates a simplicial automorphism of $\varphi : X \longrightarrow X$. Since this is a simplicial automorphism of a finite simplicial complex, it is of finite order. So $\eta \circ \varphi \circ \eta^{-1}$ is a homeomorphism $\varPhi$ of finite order of $(S^2 , \Pi)$.

Since $\varPhi$ and $\varphi$ both respect cell decomposition and induce the same permutation on  cells, they are isotopic by an isotopy which respects cells. To prove it, inductively use on the dimension of cells  Alexander's lemma which asserts the following fact: a homeomorphism of a closed cell which is the identity on its boundary is isotopic to identity by an isotopy fixed on the boundary.
\end {proof}

 \section[Minimal projection of an achiral alternating\\\mbox{} arborescent knot]{Minimal projection of an achiral alternating\\ arborescent knot}
The main goal of this section is to describe the possible minimal projections of the arborescent alternating achiral knots. However, many statements are valid for a more general framework (for projections corresponding to Case~2 in Theorem 3.2, i.e.  with an invariant Haseman circle).

Recall first that a tangle $\rm T$ is defined by a diagram (as described in Appendix) which is a pair $\rm{T}=(\Delta, \tau_{\Delta})$ where $\Delta$ is a disc on the projection plane $S^2$ and $\tau_{\Delta}$ is the intersection of a link projection $\Pi$ with $\Delta$. The {\bf boundary of the tangle} $\rm{T}$ denoted by $\partial \rm T$ is the boundary $\partial \Delta$ of the underlying disc $\Delta$. An {\bf orientation of the tangle} $\rm T=(\Delta, \tau_{\Delta})$ is an orientation of $\tau_{\Delta}$. 
\begin{definition} Two tangles $\rm{T}=(\Delta, \tau_{\Delta})$ and $\rm{T'}=(\Delta, \tau'_{\Delta})$ are {\bf isotopic} (denoted by $\rm T  \equiv \rm T'$) if there exists a homeomorphism $ f: \rm{T} \rightarrow \rm{T'}$ such that:
\begin{enumerate}
\item $f$ is the identity on the boundary $\partial \Delta$. 
\item $f(\tau_{\Delta})= \tau'_{\Delta}$.
\end{enumerate}

\end{definition}

\subsection{Splitting of the invariant Haseman circle}
Let $K$ be a prime achiral alternating arborescent knot and $\Pi$ a minimal projection of $K$ and let $\widetilde {\phi} : S^2  \longrightarrow S^2$ be an achirality isomorphism. Then, by Theorem 3.2 and Definition 8.10, there exists a unique Haseman circle $\gamma$ invariant by $\widetilde {\phi}$. Since $\Pi$ is arborescent, the two diagrams $D_1$ and $D_2$ adjacent to $\gamma$ which are swapped by the achirality isomorphism, are twisted band diagrams.
 
\begin{figure}[h!]
\includegraphics[scale=0.60]{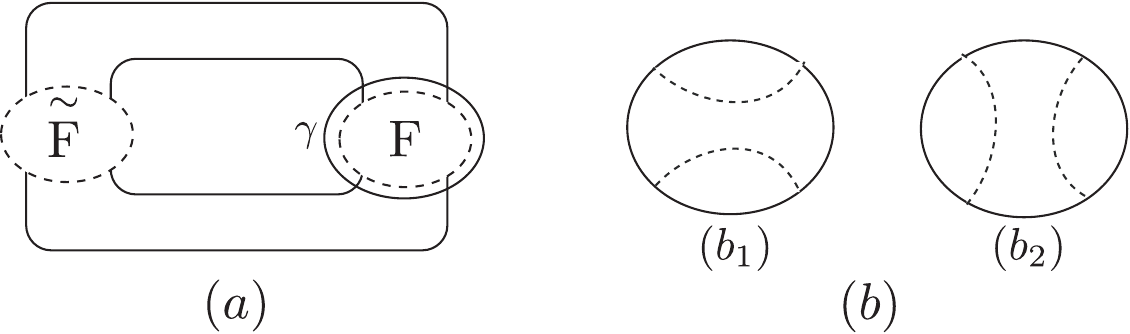}
\caption{(a) Splitting of the Haseman circle $\gamma$. 
(b) Horizontal and vertical Seifert arcs.}
\end{figure}

The circle $\gamma$ can be split into two circles and this gives rise to a partition of $\Pi$ into two tangles denoted by $\rm F$ and $\widetilde{\rm F}$ (Fig. 3). Next, our goal is to show that if $K$ is +achiral then $\Pi$ is isomorphic to a projection of type I (shown in Fig. 6 and Fig. 7). In (\cite{erquwe}, Section 6), this statement is proved by assuming that $K$ is a knot and not a link. In the following, we present another approach that is not only restricted to knots. It is based on the Murasugi decomposition of a minimal projection of an alternating link in atoms (recall from \cite{quwe2} that an {\bf atom} is a prime reduced alternating projection whose crossing number is equal to its writhe up to sign). The clue is to understand how Seifert circles behave in the Murasugi decomposition of the alternating projection $\Pi$. We have proven in (\cite{quwe2}), Theorem 5.2) that there is a Seifert circle $\Omega$ which passes through the two tangles $\rm F$ and $\widetilde{\rm F}$ such that $K$ is a diagrammatic Murasugi sum along $\Omega$ of a link $L$ and its mirror link $\pm \widehat{L}$; this is roughly denoted by $K= L \, * \, \pm \widehat{L}$. We call $\Omega$ the {\bf central Seifert circle} of $\Pi$.

(Recall that the mirror image of a link $L$ is obtained practically by changing all the crossings of a projection $\Pi_L$ of $L$. If the orientation of $\Pi_L$ is preserved by the achirality isomorphism $\widetilde {\phi}$, we denote the mirror image by $+ \widehat{L}$ or simply by $\widehat{L}$ and we say that $\widehat{L}$ and $L$ have the same orientation. If the orientation of $\Pi_L$ is reversed, we denote the mirror image by $- \widehat{L}$ and we say that $-\widehat{L}$ and $L$ have opposite orientations.)

\begin{lemma} 
The total number of Seifert circles of $\Pi$ is odd.
\end{lemma}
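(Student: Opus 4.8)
The plan is to combine the Murasugi decomposition of Theorem 1.1 with elementary Seifert-circle bookkeeping. By Theorem 1.1 (equivalently \cite{quwe2}, Theorem 5.2), splitting the invariant Haseman circle $\gamma$ as in Fig. 3 realizes $\Pi$ as a diagrammatic Murasugi sum of the alternating diagrams carried by the two tangles $F$ and $\widetilde{F}$, these being the pieces of $L$ and $\pm\widehat{L}$, glued along the square bounded by the four connecting strands; and $\omega$ is the \emph{unique} Seifert circle running through that square. First I would observe that every Seifert circle of $\Pi$ other than $\omega$ is a closed curve lying strictly inside $F$ or strictly inside $\widetilde{F}$: only the four strands cross $\gamma$, no closed Seifert circle lives in the region they sweep out, and Seifert's algorithm merges every ``long'' arc through that region into the single circle $\omega$. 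Hence
$$ s(\Pi) = n_F + n_{\widetilde{F}} + 1, $$
where $n_F$ and $n_{\widetilde{F}}$ count the Seifert circles lying strictly inside $F$, resp. $\widetilde{F}$, and $s(\Pi)$ denotes the total.

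Next I would use that the achirality isomorphism $\psi$ carries $F$ onto $\widetilde{F}$ and that, up to one of the motions $r_k$, $R_k$, the tangle $\widetilde{F}$ is the mirror $\widehat{F}$ of $F$. All of these operations leave the number of Seifert circles of a tangle unchanged (rotations and reflections of the plane act by homeomorphisms; the mirror $\widehat{F}$ has the same oriented shadow as $F$; and reversing all strand orientations merely permutes Seifert circles). Therefore $n_F = n_{\widetilde{F}}$, and
$$ s(\Pi) = 2\,n_F + 1 $$
is odd, which is the assertion.

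A more slick alternative argues through crossing number and genus: a diagrammatic Murasugi sum of two alternating diagrams is again alternating, hence minimal (an alternating projection is minimal, as recalled in the Introduction), so $c(K) = c(L) + c(\pm\widehat{L}) = 2\,c(L)$; and for a minimal alternating diagram Seifert's algorithm yields a minimal-genus surface, whence the Euler-characteristic count gives $s(\Pi) = c(K) - 2 g(K) + 1$ (\cite{crow}, \cite{mu}), again odd because $c(K)$ is even.

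The step that really needs care is the additivity used in either version: that a diagrammatic Murasugi sum of two minimal alternating diagrams has \emph{exactly} the union of their crossings and no fewer, equivalently that $\omega$ is the \emph{only} Seifert circle of $\Pi$ passing through the four-strand square, so that the Seifert circles of the two summands overlap in $\omega$ alone. This is built into the content of Theorem 1.1 and the description of $\omega$ as the central Seifert circle; once it is invoked, the parity conclusion is immediate from the symmetry $F \leftrightarrow \widetilde{F}$.
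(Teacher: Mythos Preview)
Your first approach has a circularity problem. You assume that the four strands in Fig.~3 close up into a \emph{single} Seifert circle $\omega$, giving $s(\Pi)=n_F+n_{\widetilde F}+1$. But whether those four arcs close into one Seifert circle or two depends on how the Seifert arcs inside $F$ and $\widetilde F$ connect their boundary points, and this is exactly what Lemma~4.2 establishes \emph{using} Lemma~4.1. (If they closed into two circles you would get $s(\Pi)=n_F+n_{\widetilde F}+2$, which with $n_F=n_{\widetilde F}$ is even.) Theorem~1.1 provides a Murasugi decomposition $K=L*\pm\widehat L$ with a central Seifert circle, but it does not assert that this decomposition coincides with the $\gamma$-splitting of Fig.~3; that identification appears only in Remark~4.3(2), after Lemmas~4.1 and~4.2. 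The idea can be salvaged by bypassing $\gamma$ altogether and using the Murasugi sum of Theorem~1.1 directly: then $s=s(L)+s(\widehat L)-1=2s(L)-1$ is odd, and since $s=c-h+1$ with $c$ and $h$ invariants of minimal alternating projections, the conclusion transfers to $\Pi$.

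Your second approach is correct and in fact shorter than the paper's. Both routes use equation~(4.1) and the evenness of $c(K)$; but where the paper invokes the Murasugi-atom decomposition together with $\sigma(K)=0$ to conclude that $h(K)$ is even, you simply observe that $h(K)=2g(K)$ is automatically even for a knot. The paper's longer argument buys exactly what Remark~4.3(1) advertises: it never uses that $K$ has one component, so it extends verbatim to achiral alternating links, for which $h$ need not be twice a genus.
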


\begin{proof}
We prove the lemma by using the property that for an achiral alternating link, the number of positive atoms is equal to the number of negative atoms in its Murasugi decomposition (\cite{quwe2}).

Denote by $h( L)$ the first Betti number of the orientable surface obtained by the Seifert algorithm on any minimal projection of $L$. It is well known that  the Betti number $h(K)$, the crossing number $c(K)$ and the number of Seifert circles $s(K)$ are topological invariants of an alternating knot $K$ which are linked by the equation (see \cite{mu}, \cite{crow}):
\begin{equation}
h(K)= c(K) - s(K) + 1. 
\end{equation}

Let us denote the positive atoms by $L_i$ for $i =1,\dots, q$ and the negative atoms by $L_i$ for $i =q+1,\dots, 2q$. 

We have (see for instance \cite{qu}):
\begin{equation}
h(K) = \sum_{i=1}^{q} h(L_i) + \sum_{i=q+1}^{2q} h(L_i).
\end{equation}
In addition, the signature $ \sigma(K)$ of $K$ is 
\begin{equation}
 \sigma(K)= \sum_{i=1}^{q} h(L_i) - \sum_{i=q+1}^{2q} h(L_i).
 \end{equation}

Whatever the sign of the achirality of $K$, the signature $\sigma(K)$ is $0$. So (2) and (3) imply that $h(K)$ is even. Since $K$ is an achiral alternating knot, the crossing number $c(K)$ is even. So from (1), we can deduce that the number of Seifert circles $c(K)$ is odd.
\end{proof}

In the following, we denote by $\widehat{\rm F}$ the mirror image of $\rm F$ (obtained from $\rm F$ by changing all its crossings).
 
Neglecting orientations, every minimal projection of $K$ is isomorphic by  Key Theorem 3.1 to a projection $\Pi$ which is among the eight possible configurations shown in Fig. 4; the fact that $\rm F$ and $\widetilde{\rm F}$ are exchanged by the achirality isomorphism implies that keeping the right-hand sided tangle $\rm F$ fixed, a priori $\widetilde{\rm F}$ can have eight possible configurations of $\widehat{\rm F}$ as shown in Fig. 4.

We now identify what are the relevant configurations of $\widehat{\rm F}$ for $\pm$achiral alternating links.

The bunch of Seifert circles of $\Pi$ restricted to $\rm F$ contains some inner Seifert circles and two Seifert arcs. The two Seifert arcs shown in dotted lines in Fig. 3 are either horizontal $(b_1)$ or vertical $(b_2)$. These arcs belong either to one or two distinct Seifert circles of the projection $\Pi$.

\begin{figure}[h]
\includegraphics[scale=0.35]{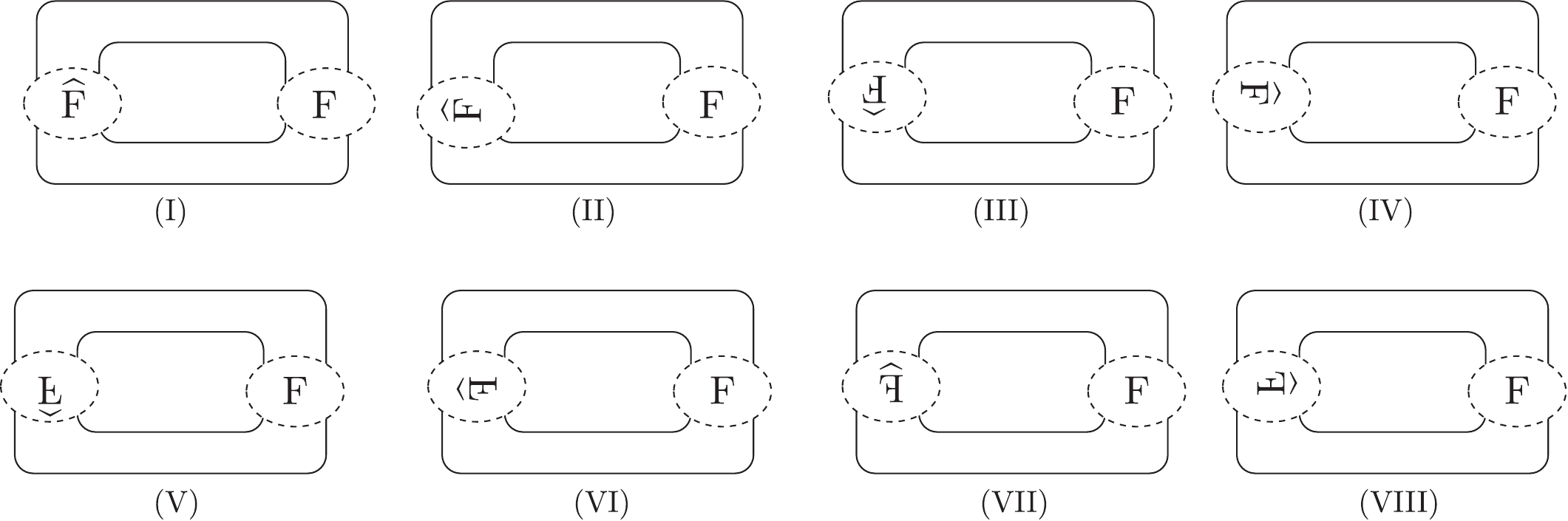}
\caption{The eight possible configurations.} 
\end{figure}

So we have:  
 \begin{lemma}$\,$ 
Assume without loss of generality that the two Seifert arcs of $\rm F$ are vertical. Then:
 \begin{enumerate}
 \item The Seifert arcs of \,$\widetilde {\rm F}$ are horizontal and the four strands connecting $\rm F$ to $\widetilde {\rm F}$  belong to the central Seifert circle.
  \item The configurations (I), (III), (V), and (VII) cannot occur.

  \end{enumerate}
\end{lemma}

\begin{proof}
{\it 1.} As $\rm F$ and $\widetilde {\rm F}$ are ``similar" by $\widetilde{\phi}$, these tangles have the same number of inner Seifert circles. If the Seifert arcs of $\widetilde {\rm F}$ were vertical, the total number of Seifert circles of $\Pi$ would be even, contradicting Lemma 4.1. Therefore, the Seifert arcs of $\widetilde {\rm F}$ are horizontal and the four strands connecting $\rm F$ to $\widetilde {\rm F}$ belong to the central Seifert circle $\Omega$ (Fig. 6).

{\it 2.} By {\it 1}, the configurations (I), (III), (V), and (VII) cannot occur because they correspond to the case where $\widetilde {\rm F}$ has its vertical Seifert arcs.
\end{proof}

\begin{figure}[h!]
\includegraphics[scale=0.4]{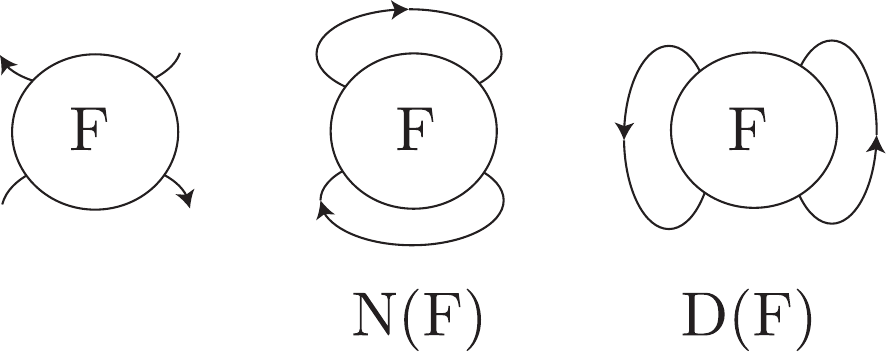}
\caption{Numerator $\rm N(\rm F)$ and Denominator $\rm D(\rm F)$.}
\end{figure}

\begin{remark}
Without loss of generality, suppose that the central Seifert circle $\Omega$ appears as in Fig. 6. Following this Seifert circle, we see that the orientations alternate on the boundary of $\rm F$.

The closures of $\rm F$ called as {\bf numerator} $\rm N(\rm F)$ and  {\bf denominator} $\rm D(\rm F)$ (\cite{crom}) are oriented according to the orientation of $\rm F$ (Fig. 5). So $\rm D(\rm F)$ and $ \rm N(\widetilde{\rm F})$ are respectively the link $L$ and its mirror link $\pm \widehat{L}$ in the Murasugi decomposition of $K= L \, * \, \pm \widehat{L}$ as shown in \cite{quwe2} .
\end{remark}

\subsection{Projections of Type I and of Type II}
\begin{figure}[h!]
\includegraphics[scale=0.55]{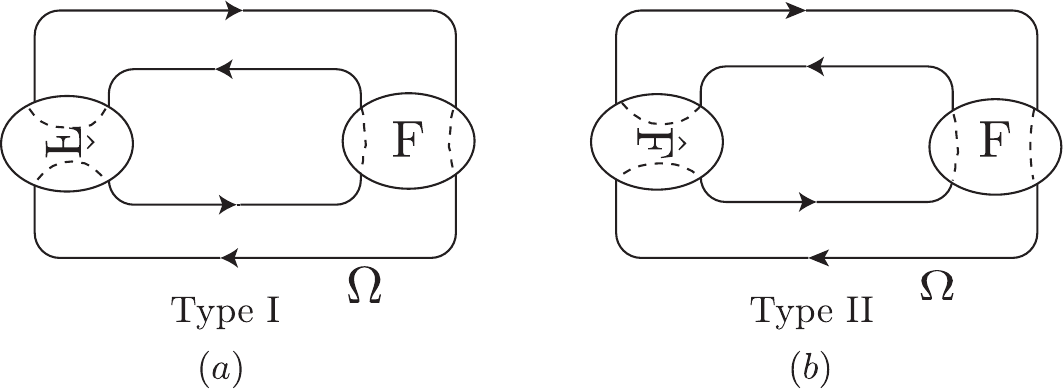}
\caption{The two types of projections.}
\end{figure}

According to Lemma 4.2, we only have 4 cases to take into consideration: (II), (IV), (VI) and (VIII). By a conjugation of the $\pi$-rotation along a horizontal line, the cases (II) and (IV) are equivalent. Likewise (VI) and (VIII) are equivalent by conjugation of the $\pi$-rotation along a vertical line. There are therefore only two cases left to deal with:
 
(VIII) called {\bf projection of type I} and  (VI) called {\bf projection of type II} (Fig. 6).
 
A minimal projection of an achiral alternating prime knot is isomorphic, up to a global change of orientation, to a minimal projection of type I or II (see Theorem 6.1 \cite{erquwe}).

\begin{proposition}$\,$
Let $K$ be a prime alternating knot and $\Pi$ a minimal projection with an invariant Haseman circle. Then:
\begin{enumerate} 
\item
If $K$ is $+$achiral, $\Pi$ is isomorphic to a projection of type I.

\item
If $\Pi$ is isomorphic to a projection of type II, $K$ is a $-$achiral knot with a Tait involution (i.e. an orientation preserving involution $\Phi$ of $S^2$ such that $\Phi(\Pi) = -\widehat {\Pi}$).

\end{enumerate}
\end{proposition}

\begin{figure}[h!]
\includegraphics[scale=0.5]{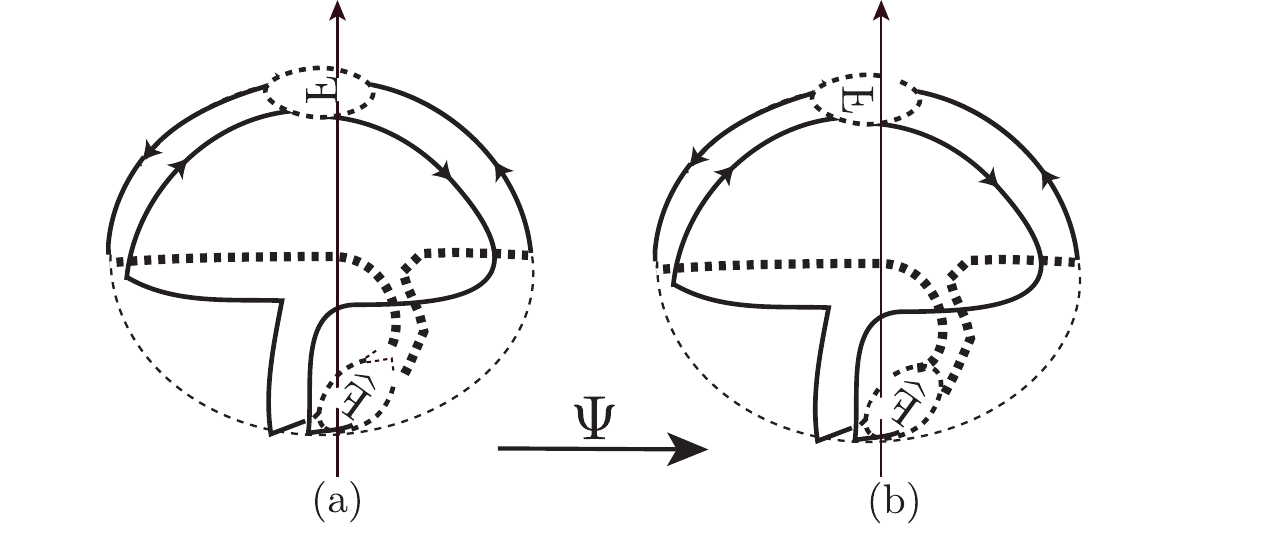}
\caption{(a) A projection of type I and (b) its image under a twisted rotation $\Psi$ of order 4.}
\end{figure}

\begin{proof}
It is easy to realise that in a projection of type I, the links $\rm D(\rm F)=L$ and $ \rm N(\widetilde{\rm F})=D(\widehat {F})=\widehat{L}$ have the same orientation and in a projection of type II, $ \rm D(\rm F)=L$ and  $\rm N(\widetilde{\rm F})=D(\widehat {F})= -\widehat{L}$ have opposite orientations.

$\it 1.$ By the Murasugi decomposition, since $K$ is +achiral , we have $K= L* \widehat{L}$ (\cite{quwe2}) for an alternating link $L$. So $\Pi$ is isomorphic to a projection of type I since $L=\rm D(\rm F)$ and its mirror image $ \widehat{L}=\rm N(\widetilde{\rm F})$ have the same orientation. 

$\it 2.$ A projection of type II corresponds to a $-$achiral knot. We have $ \rm D(\rm F)=L$ and $ \rm N(\widetilde{\rm F})=D(\widehat {F})= -\widehat{L}$ with opposite orientations; a Tait involution is visible.
\end{proof}

We introduce the following notation:
\begin{notation}
Let $\rm F$ be a tangle. 
\begin{enumerate}
\item $\rm F^*$ is  $\rm R^*(\rm F)$ where $\rm R^*$ is 
the half-turn rotation with center the ``middle" of $\rm F$.

\item ${\rm F}^v$ is the image of $\rm F$ under rotation of $\pi$ in the projection plane around its vertical axis. 
Similarly, ${\rm F}^h$ is the image of  ${\rm F}$ under rotation of $\pi$ around its horizontal axis .

\item ${\rm F} \sim {\rm G}$ if ${\rm F}$ and ${\rm G}$ are related by a homeomorphism which leaves the boundary circle fixed and by a finite sequence of flypes.
\item ${\rm F}$ is $*$-equivalent if $\rm F  \sim {\rm F}^*$.
\item ${\rm F}$ is $h$-equivalent (respectively $v$-equivalent) if ${\rm F}\sim {\rm F}^h$ (respectively ${\rm F}\sim {\rm F}^v$).
\item ${\rm F} = \rm F^*$ if ${\rm F}$ is invariant under $\rm R^*$ and we say that ${\rm F}$ is \bf{$*$-visible}.
\end{enumerate}
\end{notation}

\begin{definition}
Let  $\rm F$ be an alternating tangle such that $\rm F$ is $*$-equivalent. Then:
\begin{enumerate}
\item 
If there exists $\rm F'$ such that $\rm F' \sim \rm F$ (so $\rm F'$ is also alternating) and ${\rm F'}$ is $*$-visible, 
$\rm F$ is {\bf strictly $*$-visible}. 
\item
If there is no alternating tangle $\rm F' \sim \rm F$ such that ${\rm F'} \equiv {\rm F'}^*$, ${\rm F}$ is {\bf slightly $*$-visible}. 
\item ${\rm F}$ is a {\bf $*$-tangle} if it can be isotoped into a $*$-visible tangle.
\end{enumerate}
\end{definition}
\begin{example}  The tangle $\rm F$ shown in Fig. 8 is a $*$-tangle but it is only slightly $*$-visible (see Fig. 25).
\end{example}

\begin{figure}[h!]
\includegraphics[scale=0.45]{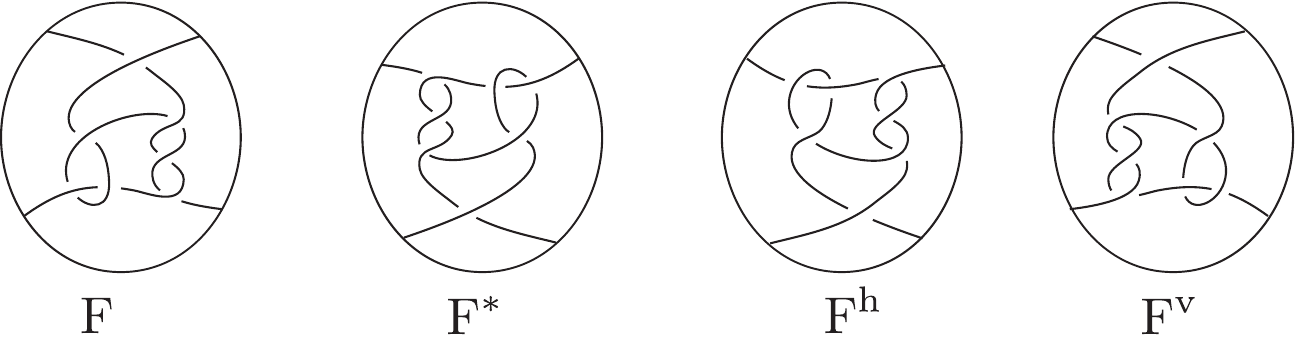}
\caption{ ${\rm F}$, ${\rm F}^*$, ${\rm F}^h$and ${\rm F}^v$.}
\end{figure}

\begin{theorem}[Theorem 6.5 \cite{erquwe}] 
Let $K$ be a $+$achiral alternating knot which is in case 2 of Theorem 3.2 and let $\Pi$ be a minimal alternating projection of $K$. Then $\Pi$ is isomorphic to a projection as shown in Fig.~6({\it a}) where its invariant Haseman circle $\gamma$ decomposes $\Pi$ into two tangles $\rm F$ and $\widehat {\rm F}$ and the order of $+$achirality of $K$ is $4$. In addition, ${\rm F} \sim \rm F^*$. 
\end{theorem}

\begin{proof}(\cite{erquwe} Proposition 6.3).
The analysis of the possible homeomorphisms of $S^2$ implies that the mirror homeomorphism $\Psi$ is a twisted rotation of order 4 (up to conjugation) whose reflection $R_e$ is by a large circle of $S^2$. The $+$achirality of $K$ implies that the condition $\rm F \sim F^*$ must be satisfied.
 \end{proof}

For a +AAA knot $K$ and $\Pi$ a minimal projection of $K$ described by Fig.~6({\it a}), we call the tangle $\rm F$ a {\bf primary tangle} of $\Pi$. 

\clearpage

\begin{remark}\ 
\begin{enumerate}
\item In the case where flypes are not needed, we have the invariance of $\rm F$ and ${\rm F}^*$ under $\rm R^*$ that is ${\rm F} = {\rm F}^*$ and $\widehat{\rm F} = \widehat {\rm F}^*$. Thus, the projection of type I as shown in Fig. 7 is invariant under a twisted rotation of order 4.

As an example, consider Figure-eight knot. A minimal alternating projection of Figure-eight knot is a projection of type I such that one of the tangles $\rm F$ and $\widehat{\rm F}$ contains two positive crossings and the other two negative crossings. This is a +achiral projection in the sense of Definition~2.6.

\item The condition ${\rm F}  \sim {\rm F}^*$ implies that flypes may be involved. This means that we do not have necessarily a minimal achiral projection. However, we shall give in \S5 a constructive proof for the existence of an achiral projection for the +AAA knots. More precisely, we describe a construction producing a non-alternating $*$-visible tangle $\mathcal F$ isotopic to ${\rm F}$.
\end{enumerate}
\end{remark}

\section{+AAA visibility Theorem 5.1}

In this section, we assume that {\bf projections and tangles are arborescent}.
Let us state +AAA Visibility Theorem 5.1 which deals with the existence of an achiral projection for +AAA knots.
\begin{theorem}[+ AAA Visibility Theorem]
\label{AAA}
Let $K \subset S^3$ be a prime, alternating, arborescent knot. Suppose further that $K$ is +achiral. Then there exists a projection $\Pi_K \subset S^2$ of $K$ (non necessarily minimal) and a diffeomeorphism $\Phi: S^2 \longrightarrow S^2$ of order 4 such that:
\begin{itemize}
\item[1.] $\Phi$ preserves the orientation of $S^2$.
\item[2.] $\Phi$ preserves the orientation of the projection.
\item[3.] $\Phi(\Pi_K) = \widehat {\Pi}_K$ where $\widehat{\Pi}_K$ denotes the image of $\Pi_K$ by reflection through the projection plane.
\end{itemize}
\end{theorem}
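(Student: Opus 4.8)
## Proof Proposal for the $+AAA$ Visibility Theorem

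The plan is to run an induction on the Bonahon--Siebenmann structure tree $\mathcal{A}(K)$, using the analysis of the fixed point of the induced automorphism $\phi$ that was already set up in Sections 3--5. Since $K$ is arborescent, every vertex of $\mathcal{A}(K)$ is a $B$-vertex (twisted band diagram), so Theorem 3.2 tells us the invariant set of the achirality isomorphism $\psi$ is either an arborescent invariant Haseman circle $\gamma$ (case C) or — after passing through the tree — a configuration we can reduce to that case. First I would treat the base case: when an invariant Haseman circle $\gamma$ is present, the Structure Theorem (Theorem 4.5/5.6) gives $F \sim F^{*}$, i.e. $F$ is flype-equivalent to $F^{*}$, and the order of $+$achirality is $4$. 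The point of the theorem is that flype-equivalence is not good enough to give a \emph{minimal} achiral projection, but it \emph{is} enough to give \emph{some} achiral projection once we are willing to enlarge the diagram.

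The key device is the $\alpha$-move and the resulting cross-plumbings. Concretely, given $F \sim F^{*}$, I would build a new (non-minimal) tangle $\widetilde{F}$ by plumbing $F$ and $F^{*}$ together along the central Seifert circle $\omega$ in a symmetric way: the $\alpha$-move replaces the flype-adjustment (which is only an isotopy of $S^3$, invisible on $S^2$) by an honest enlargement of the planar diagram in which the two copies sit in mirror-symmetric discs. Performing this simultaneously at every flype site dictated by the chain of flype-equivalences $\psi^j(\Gamma_i) \sim \Gamma_{k(j)}$ produces a projection $\Pi_K$ on which the abstract tree automorphism $\phi$ is realized by an actual diffeomorphism $\Phi$ of $S^2$ with no residual flypes. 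Because the invariant circle is arborescent and the relevant short orbit has length $2$ (Lemma 5.1, $\psi^{2}(F) = R^{*}(F) = F^{*}$), the diffeomorphism $\Phi$ obtained this way is a twisted rotation of order $4$: its square is the half-turn $R^{*}$ centered on $F$, and the reflection is through the equatorial circle carrying $\omega$. One then checks directly that $\Phi$ preserves the orientation of $S^2$, preserves the orientation of $\Pi_K$ (this is exactly the $+$achirality case, as in Proposition 4.4(a)), and sends $\Pi_K$ to $\widehat{\Pi}_K$.

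For the inductive step, if the invariant subtree is larger than a single edge, I would cut $\mathcal{A}(K)$ at the unique $\phi$-fixed point and group the remaining branches into $\phi$-orbits. Generic orbits contribute copies of a subtangle that are permuted freely and carry no obstruction — we simply place the $n$ copies in a rotationally symmetric fan of discs. Short orbits are handled by the Filling Construction recalled in Section 5: fill each boundary Haseman circle with a singleton or a $2$-spire so that flypes are killed, then apply the induction hypothesis to the filled-in jewel-free piece, which is again a $+AAA$ knot of strictly smaller tree, and finally re-insert the original subtangles using the $\alpha$-move to absorb the flype-equivalences. Threading these local symmetric models together around the fixed point yields the global order-$4$ diffeomorphism $\Phi$.

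The main obstacle I expect is \emph{compatibility at the fixed point}: the $\alpha$-moves performed in different branches must glue to a single globally defined $\Phi$ of order exactly $4$ (not merely order $4$ up to isotopy in each piece), and one must verify that the enlargements introduced in one orbit do not destroy the symmetry already arranged in another. Controlling this requires knowing that all the flype-equivalences $F \sim F^{*}$ can be realized \emph{simultaneously and equivariantly} — essentially that the flype-equivalence relation on subtangles is compatible with the $\mathbb{Z}/4$-action on the tree. This is where the self-contained bookkeeping of the structure tree from Section 9 does the real work, and it is the step I would write out most carefully; everything else is a matter of drawing the symmetric pictures (the cross-plumbings) correctly.
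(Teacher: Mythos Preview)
Your proposal has the right ingredients ($\alpha$-moves, cross-plumbings, the Structure Theorem giving $F\sim F^{*}$) but the architecture of the argument is off in a way that creates a real gap. Once the Structure Theorem hands you the Type~I projection with the two halves $F$ and $\widehat{F}$, the problem is entirely \emph{inside the single tangle $F$}: you must replace $F$ by an isotopic (non-minimal) tangle on which the half-turn $R^{*}$ is an honest symmetry, i.e.\ promote $F\sim F^{*}$ to $F=F^{*}$. You instead describe the $\alpha$-move as ``plumbing $F$ and $F^{*}$ together along $\omega$''; that is not what it does. The $\alpha$-move is a local operation on a tangle of the shape $\,^{+}G$ (a tangle $G$ with one extra crossing in the North) sitting \emph{inside} $F$: it introduces two Reidemeister~II pairs and re-routes strands to exhibit $\,^{+}G$ as a cross-plumbing of $\,^{+}G_1$ and $\,_{+}G_2$, visibly $R^{*}$-symmetric. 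The relevant reduction is Proposition~7.11: $\,^{+}G$ is $\star$-equivalent iff each $\,^{+}G_i$, $\,_{+}G_i$ is $\star$-equivalent, and these have strictly smaller complexity.

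Accordingly, the induction is not on the structure tree $\mathcal{A}(K)$ or on orbits of $\phi$, but on the \emph{complexity} of $F$ (the number of essential Conway circles, Definition~7.4). The induction step is a concrete case split on the outermost twisted band diagram of $F$ (Fig.~11): with $n$ inner tangles and total twist $a\in\{0,1\}$, the cases $n$ even (any $a$) and $n$ odd with $a=0$ pair subtangles or reduce to the middle one, while the only obstruction case is $n$ odd with $a=1$, where the stray crossing attaches to $F_{k+1}$ and forces an $\alpha$-move. Rational tangles and rational annuli are handled separately via even continued fraction expansions (\S7.5). Your appeal to the Filling Construction and to generic/short orbits imports machinery from the \emph{polyhedral} (jewel-invariant) case of \S5; in the arborescent setting there are no jewels, and that apparatus plays no role. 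Likewise Lemma~5.1 is about the polyhedral $F$-property, not the present situation. Rewriting the argument around the complexity induction and the parity case split is what is actually needed; the ``compatibility at the fixed point'' worry you flag then disappears, because the symmetry is built from the inside of $F$ outward, one twisted band diagram at a time.
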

 In \S5.1, we define the depth of an arborescent tangle. The general form of a primary tangle $\rm F$ of a projection of type I is described in \S5.2.  By the analysis in \S5.2, we deduce that the obstacles to an achiral projection are essentially concentrated in the minimal central tangle of the primary tangle (Definition 5.10). Consequently, our proof of +AAA Visibility Theorem (outlined in \S5.4) is done essentially by induction on the depth of irreducible minimal central tangles; the $\alpha$-move (described in \S 5.3) provides the induction step while revealing the symmetry under $\rm R^*$.

\subsection{Essential Conway circles and Depth of tangles}

In this subsection, we introduce the notion of {\bf essential Conway circle} of a link projection $\Pi$ and we define the {\bf depth of a tangle} on which the proof by induction of +AAA Visibility Theorem 5.1 is based.

\begin{definition}
A {\bf rational tangle} is a tangle in which all the {\bf canonical Conway circles} (see Definition 8.10) are concentric and are such that the innermost circle bounds a disc containing only a spire (see the definition in the Appendix). A rational tangle of a link projection $\Pi$ is {\bf maximal} if it is not strictly contained in a larger rational tangle of $\Pi$.
\end{definition}

\begin{definition}
{\bf An essential Conway circle} of a projection is a canonical Conway circle which is not properly contained in a maximal rational tangle. 
\end{definition}

This means that for a maximal rational tangle of a link projection, only its boundary is essential. 


\begin{example}
Consider the tangle $\tau$ described in Fig. 9; the canonical Conway circles $\gamma_1, \dots ,\gamma_5$ are essential while $(\delta)$  is not an essential Conway circle. The circle $\gamma_0$ is not necessarily a canonical Conway circle of $\Pi$.
\end{example}

\begin{figure}[h!]
\includegraphics[scale=0.35]{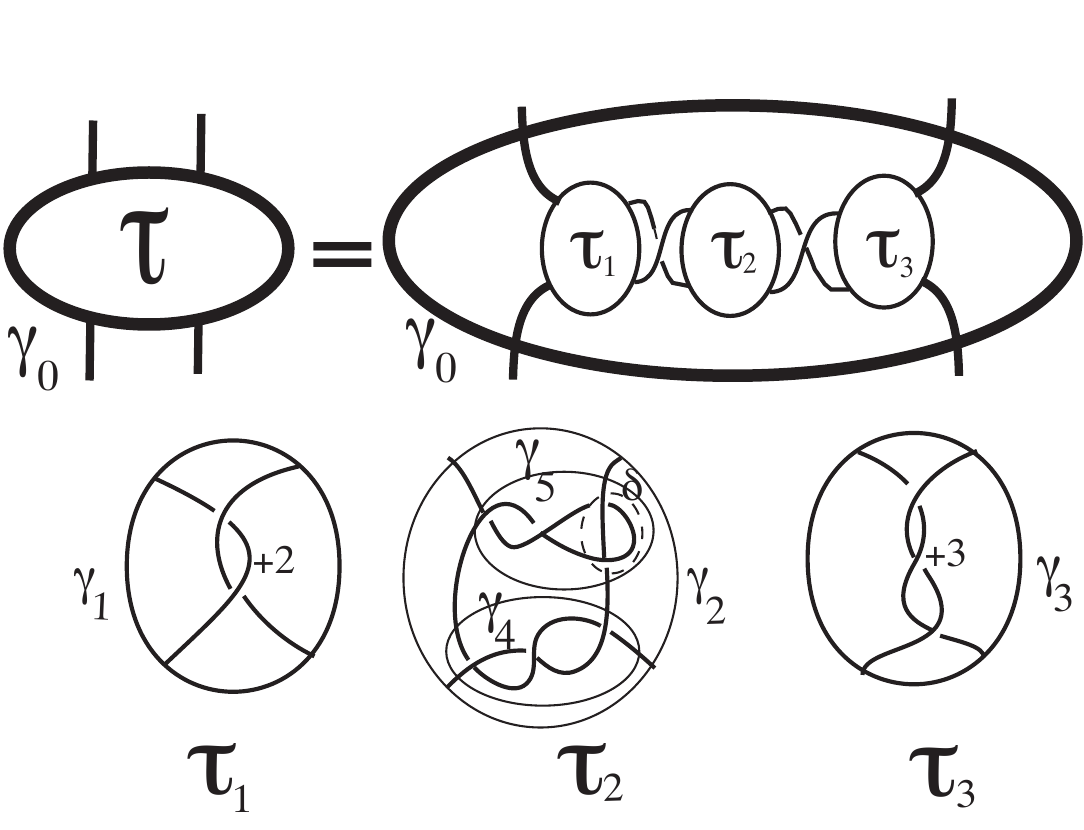}
\caption{A tangle with its essential Conway circles.}
\end{figure}

Let $\rm H$ be an (arborescent) tangle with its underlying disc $\Delta$.

Denote by $\mathcal{C}_{\Delta}^{ess}$ the set of essential Conway circles contained in $\Delta$. These circles divide $\Delta$ into connected planar surfaces. Let $\Sigma$ be such a connected planar surface which is not a disc. Denote its boundary by $\partial \Sigma  = \{\gamma, \gamma_1, \dots ,\gamma_n\}$ where inside $\Delta$, $\gamma$ bounds a disc $\Delta_{\Sigma}$ which contains all the other components  $\{ \gamma_1, \dots , \gamma_n\}$ of $\partial \Sigma$. Each $\gamma_i$ for $i=1, \dots, n$ bounds a disc in $\Delta$ which is called an {\bf inner disc} of the planar surface of $\Sigma$.

We denote by $\mathcal{P}_{\Delta}$ the set of planar surfaces determined by $\mathcal{C}_{\Delta}^{ess}$. We introduce an order relation on $\mathcal{P}_{\Delta}$ as follows.

\begin{definition}
Let $\Sigma^1$ and $\Sigma^2$ be two elements of $\mathcal{P}_{\Delta}$. We write $\Sigma^1 > \Sigma^2$ if $\Sigma^2$ is contained in an inner disc of $\Sigma^1$.
\end{definition}
Consider the  diagrams $(\Sigma_i , \Gamma_i)$ with $\Gamma_i = \Sigma_i \cap \Pi$. Since $\rm H$ is arborescent, each of these diagrams is either a maximal rational tangle or a twisted band diagram. Let us denote the set of these elements by $ \mathcal S_{\Delta}$. For $ \mathcal S_{\Delta}$, the order relation can be derived as follows.

\begin{definition}
 Let $(\Sigma_1 , \Gamma_1 )$ and $(\Sigma_2 , \Gamma_2 )$ be two elements of $\mathcal S_{\Delta}$ associated with $\Sigma_1$ and $\Sigma_2$. Define the order relation:
$$(\Sigma_1 , \Gamma_1 ) > (\Sigma_2 , \Gamma_2 )  \Longleftrightarrow \Sigma_1 >\Sigma_2.$$
\end{definition}

\begin{remark} 
A minimal element of $\mathcal{P}_{\Delta}$ is the underlying disc of a maximal rational tangle and a minimal element in $ \mathcal S_{\Delta}$ is a maximal rational tangle.
\end{remark} 

\begin{definition}
Let $\rm H$ be a tangle in an arborescent link projection. Let $l_\eta$ be the maximum of the lengths $l$ of descending chains $\Sigma_1 > \Sigma_2 > \cdots > \Sigma_{l+1}$ for the order relation $>$ on the set $\mathcal{P}_{\Delta}$. (Note that $l_\eta$ is the number of symbols $>$ in the longest descending chain).

The {\bf depth} $\mu (\rm H)$ of $\rm H$ is defined as: 
$$\begin{cases}
  \mu(\rm H)= l_\eta & \text{if }\partial \rm H \,\text{is an essential Conway circle of $\Pi$}. \\
   \mu(\rm H)= l_\eta +1& \text{if }\partial \rm H \,\text{is not an essential Conway circle of $\Pi$ and}\,\,   \rm H \,\\
& \text{is not a rational tangle}.
   \end{cases}$$
\end{definition}

\begin{figure}[h!]
\includegraphics[scale=0.4]{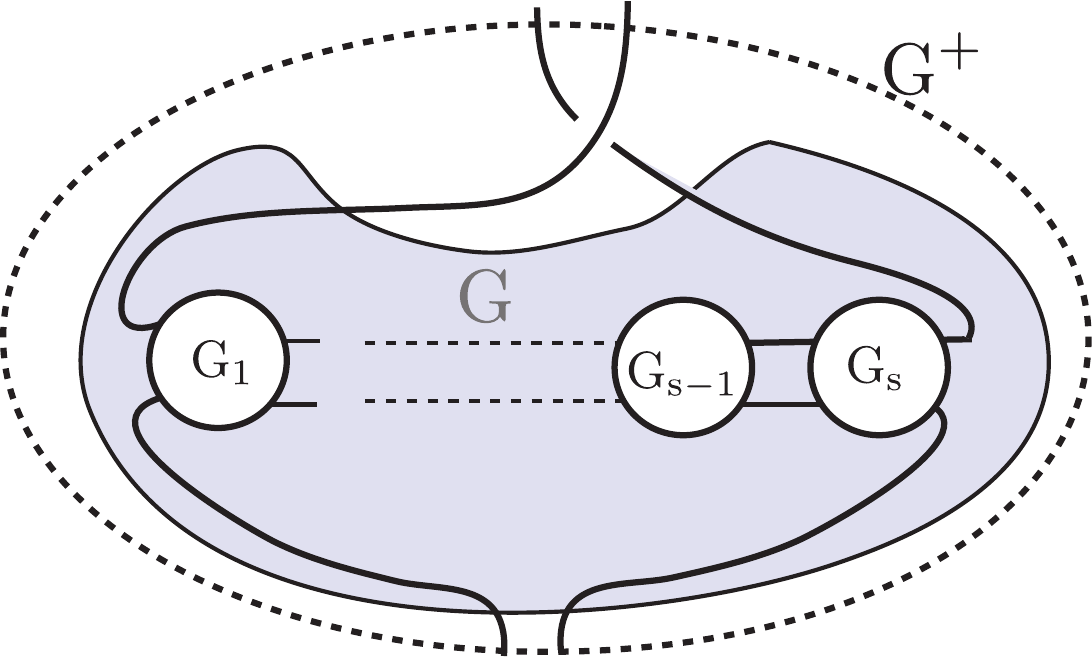}  
\caption{An irreducible tangle.}
\end{figure}

\begin{notation}
$^+\rm G$ is the tangle $\rm G$ enriched by an additional crossing north of $\rm G$ (Fig. 10).

$_+\rm G$ is the tangle $\rm G$ enriched by an additional crossing south of $\rm G$ (Fig. 16).
\end{notation}

\begin{definition} Let $\rm G$ be an arborescent sum tangle $ \rm G_1 \# \rm G_2 \cdots \#\rm G_s$ with $\rm s \geq 2$. The tangle $^+\rm G$ is {\bf irreducible} if the additional crossing to form $^+\rm G$ does not belong to the support band of the sum tangle $\rm G$ (as shown in Fig. 10).
\end{definition}

\begin{example}\ 
\begin{itemize}
\item[1)] Rational tangles are of depth $0$. 
\item[2)] Consider the tangle $^+\rm G$ of Fig. 20. Since $\mu(\rm G_1)= \mu(\rm G_2)=0$, we have $\mu(\rm G) = 1$ and $ \mu(^+\rm G)=2$. 
\end{itemize}
\end{example}

\subsection{Primary tangle of the +AAA knots and\\ its minimal central tangle}
\subsubsection{General form of a $*$-equivalent tangle}
As noted at the beginning of \S 5, in this section we only consider the knot projections and the tangles that are arborescent.
We first define the principal decomposition of an arborescent tangle.

\begin{figure}[h!]
\includegraphics[scale=0.60]{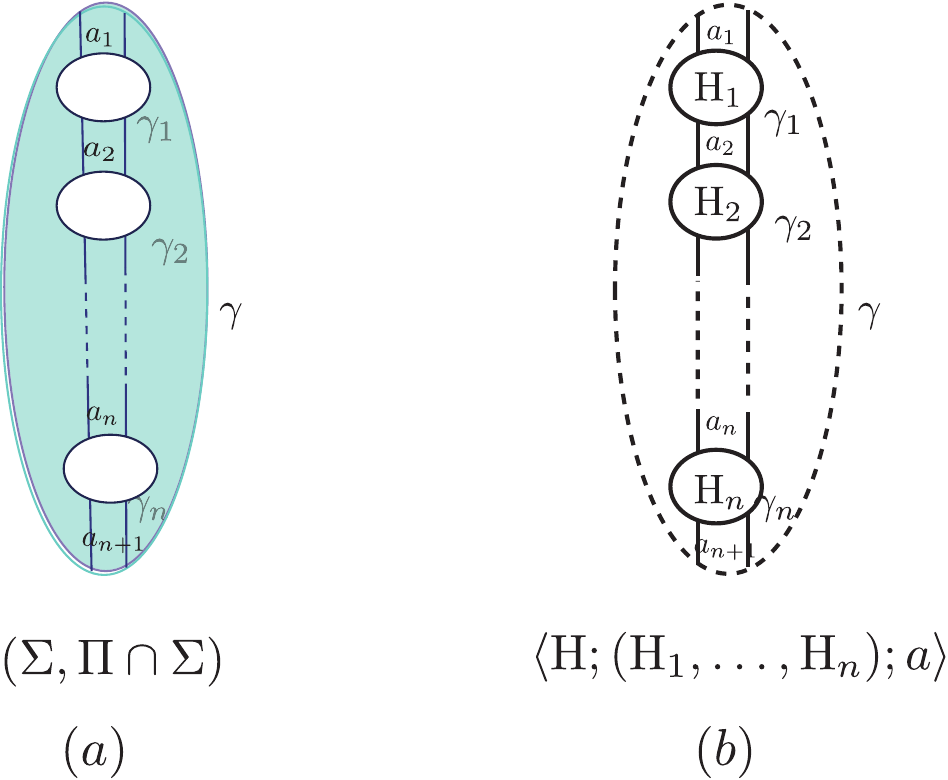}
\caption{$(a)$ Principal twisted band diagram $(\Sigma, \Gamma=\Pi \cap \Sigma)$ of $\rm H$; $(b)$ Principal decomposition of $\rm H$.}
\end{figure}

Let $\rm{H}=(\Delta, \Pi \cap \Delta)$ be a non-rational tangle. Denote its boundary by $\gamma$.

Consider the planar connected surface $\Sigma$ in $\Delta$ (Fig. 11) which has $\gamma$ in its boundary. Denote the other boundary components of $\Sigma$ by $\gamma_1, \dots, \gamma_n$; these are essential Conway circles. With $ \Gamma= \Pi \cap \Sigma$, the twisted band diagram $(\Sigma, \Gamma)$ is called the {\bf principal twisted band diagram} of $\rm H$ with weight $a =\sum a_i$. 
\begin{definition}
The principal twisted band diagram $(\Sigma, \Gamma)$ defines the {\bf principal decomposition} $\langle \rm H ;( \rm H_1, \dots, {\rm H}_n); a  \rangle$ of $\rm H$ in $n$ tangles $\rm H_i$ with $ \partial \rm H_i = \gamma_i$ for $i=1, \dots ,n$.

The {\bf breadth} $ l(\rm H)=n$ is the number of tangles which are the constituents of the principal decomposition of $\rm H$. (Fig. 11).
\end{definition}
Let $\rm G$ be an arborescent tangle. Using the arborescent structure of $\rm G$ and therefore of $^+\rm G$, we have the following definition of an irreducible tangle $^+\rm G$.
\begin{definition}
If the depth $ \mu(^+\rm G)$ satisfies $\mu(^+\rm G)= \mu(\rm G) +1$, the arborescent tangle $^+\rm G$ is {\bf irreducible}. So the principal twisted band diagram of $^+\rm G$ is a twisted annulus (\S 8.1), that is, $^+\rm G$ has the principal decomposition $\langle {^+\rm G}; {(\rm G)}; \pm1 \rangle$.
\end{definition} 
 
Let ${\rm H}$ be an arborescent $*$-equivalent tangle such that $ l{(\rm H)}=n \geq 2$ and its principal decomposition is $\langle {\rm H} ;({\rm H}_1, \dots, {\rm H}_n);a  \rangle$.

We have 4 possible cases for $\rm H$ according  the parity of $n$ and of $a$:

Without loss of generality, we can assume that $a=0$ or $a=1$.

If $ l({\rm H})=n=2k+1$ with $k \geq 1$, we define the {\bf central tangle} ${\cal K}(\rm H)$ of ${\rm H}$  as: 
\[ 
{\cal K}({\rm H})\ =\begin{cases}
  {\rm H}_{j+1} &{\rm if} \hspace{.15cm} l({\rm H})=n=2k+1 \hspace{.15cm} {\rm and}  \hspace{.15cm} a=0 .\\
  {^+\rm H}_{k+1}  &{\rm if} \hspace{.15cm}  l({\rm H})=n=2k+1 \hspace{.15cm} {\rm and}  \hspace{.15cm} a=1.
 \end{cases} 
 \]

The canonical decomposition of an alternating projection (see Appendix) and Key Theorem 3.1 imply the following properties:
\begin{figure}
\includegraphics[scale=0.5]{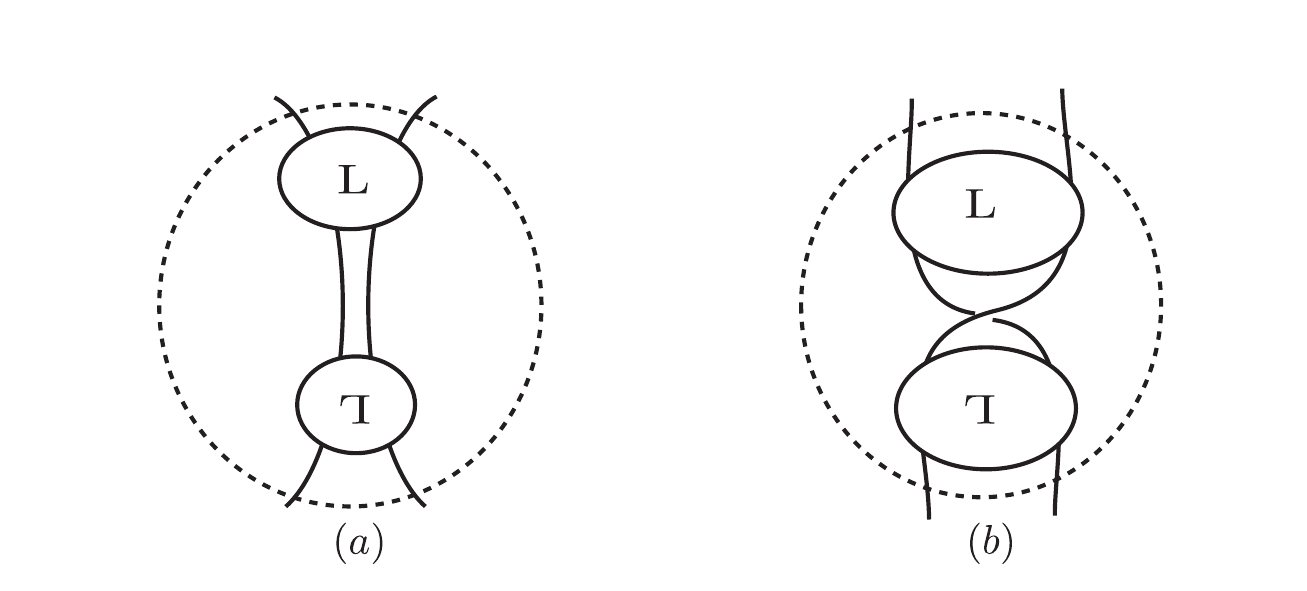}
\caption{Case with $l({\rm H})=n=2k$.}
\end{figure}

\begin{enumerate}
\item Suppose that {$n = 2k$ and $a=0$} (Fig. 12($a$)). The center of half-turn rotation $\rm R^*$ is between ${\rm H}_k$ and ${\rm H}_{k+1}$. Since  ${\rm H}$ is  $*$-equivalent, we have for $i = 1 , \dots , k$
$${\rm H}_{n+1-i} \sim {\rm H}_{i}^*.$$ 
Since the tangles ${\rm H}_{i}$ and ${\rm H}_{n+1-i}$ are disjoint, we can do flypes to get the {\bf pairing property} of ${\rm H}_i$, i.e.,
$${\rm H}_{n+1-i} \equiv {\rm H}_{i}^*.$$
and ${\rm H}_{n+1-i}$ is called the {\bf pairing} of ${\rm H}_{i}$.
By grouping the first $k$ tangles ${\rm H}_{i}$ in tangle $\rm L$, we get the pairing of $\rm L$ and therefore the $*$-visibility of ${\rm H}$.

 \item Suppose that {$n = 2k$ and $a=1$} (Fig. 12($b$)). 
The visible crossing can be moved by flypes to the center of the principal band and as in the previous case, we get the pairing for each $i = 1 , \dots , k$
 $${\rm H}_{n+1-i} \equiv {\rm H}_{i}^*$$ 
and therefore the $*$-visibility of $\rm H$.

 \begin{figure}[h!]
\includegraphics[scale=0.58]{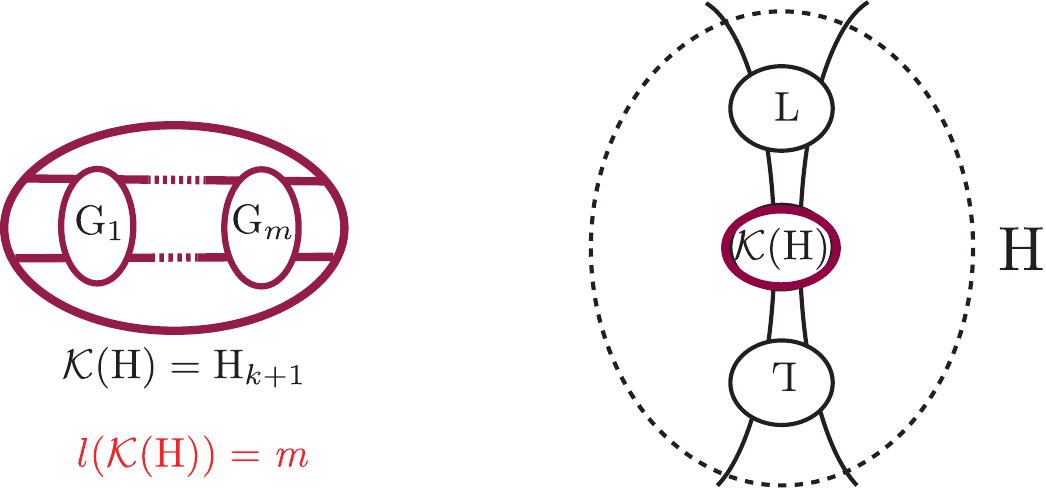}
\caption{Case with $l({\rm H})=n=2k+1$ and $a=0$.}
\end{figure}

 \begin{figure}[h!]
\includegraphics[scale=0.43]{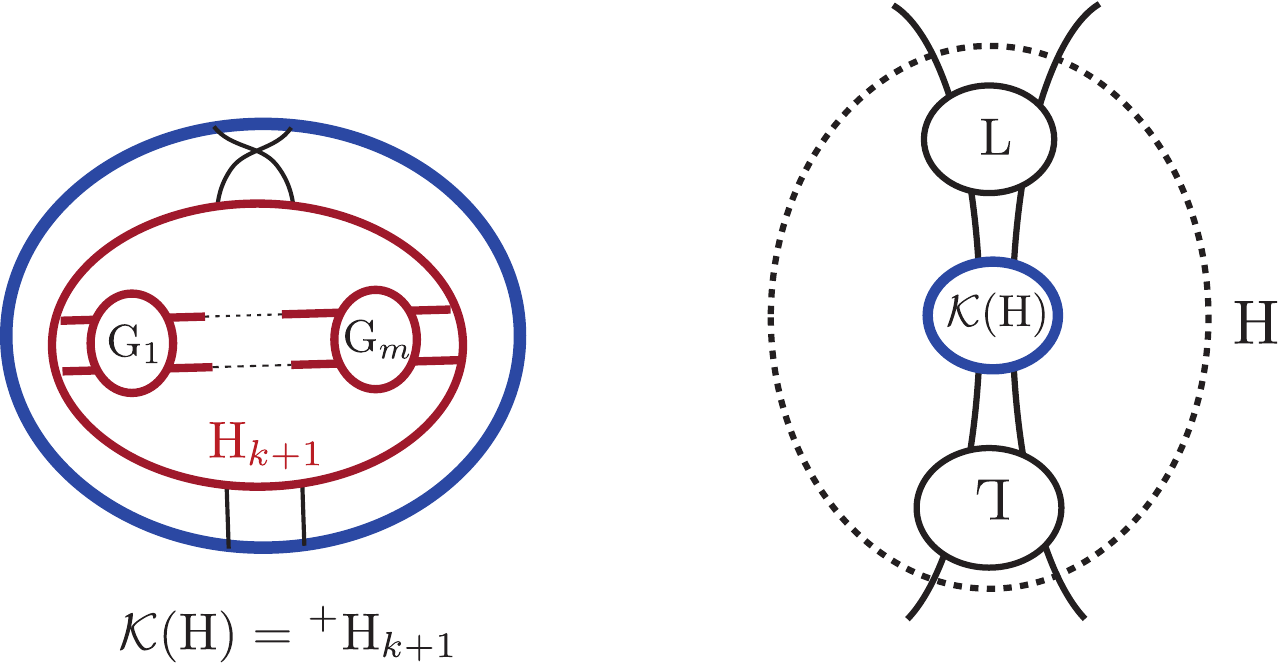}
\caption{Case with $l({\rm H})=n=2k+1$ and weight $a=1$.}
\end{figure}

$n=2k+1$ with $a=0$ or $a=1$. the pairing property is fulfilled for $i = 1 , \dots , k$:
$${\rm H}_{n+1-i} \equiv {\rm H}_{i}^*.$$
The center of $\rm R^*$ is the ``middle" of ${\cal K}(\rm H)$.
 
\item Suppose that {$n = 2k+1$ and $a=0$} (Fig. 13). 
Since $\rm H$ is $*$-equivalent, ${\cal K}(\rm H)$ is $*$-equivalent. Furthermore 
 ${\rm H}$ is slightly $*$-visible if ${\cal K}(\rm H)=\rm H_{k+1}$ is slightly $*$-visible.

\item Suppose that {$n = 2k+1$ and $a=1$} (Fig. 14).
Again with the flypes, the crossing can be moved to north of ${\rm H}_{k+1}$. In the case that ${\rm H}_{k+1}$ is not rational, the tangle  ${\cal K}(\rm H)= {^+{\rm H}_{k+1}}$ is irreducible.

Since $\rm H$ is $*$-equivalent, the pairing property implies that $^+\rm H_{k+1}$ is also $*$-equivalent. 
In addition, $\rm H$ is slightly $*$-visible if ${\cal K}(\rm H)=^+\rm H_{k+1}$ is slightly $*$-visible.
\end{enumerate}
\noindent {\bf Summary.} 
Let $\rm H$ be an arborescent alternating tangle with $ l(\rm H)=n \geq 2$. 
\begin{itemize}
\item[(a)] If $\rm H$ is $*$-equivalent, then $\rm H$ has the pairing property.
\item[(b)] $\rm H$ is $*$-equivalent if and only if $\rm H$ has the pairing property and ${\cal K}(\rm H)$ is $*$-equivalent. Furthermore,
\item[(c)] $\rm H$ is slightly $*$-visible if and only if $\rm H$ has the pairing property and ${\cal K}(\rm H)$ is slightly $*$-visible.
\end{itemize}

\subsubsection{Minimal central tangle of a primary tangle}
Let $\rm H$ be an alternating arborescent tangle with the principal decomposition:
$$\langle {\rm H}; ({\rm H_1},{\rm H_2}, \dots,{\rm H}_n); a \rangle.$$

We now extend the notion of central tangle to the set of tangles that are either of even breadth, or rational, or irreducible. If $\rm H$ is such a tangle, we define its central tangle ${\cal K(\rm H)}$ as a trivial horizontal or vertical tangle depending on whether the Seifert arcs of $\rm H$ are horizontal or vertical.

Let us denote the trivial (horizontal or vertical) tangle by ${\cal T_\emptyset}$.

\begin{definition}
The {\bf central tangle} ${\cal K}(\rm H)$ is:
\[ {\cal K}({\rm H})\ =\begin{cases}
            {\cal T_\emptyset} &
             \text{\rm{if $H$ is either rational or irreducible or}} 
             \hspace{.15cm} l({\rm H}) =n=2k.\\
           {\rm H}_{1} &{\rm if} \hspace{.15cm} l({\rm H})=1 \hspace{.15cm} {\rm and}  \hspace{.15cm}  a \equiv 0 (\rm mod \ 2) \hspace{.15cm} {\rm with}  \hspace{.15cm}  a \neq 0.\\
              {^+\rm H}_{1}  &{\rm if} \hspace{.15cm}  l({\rm H})=1 \hspace{.15cm} {\rm and}  \hspace{.15cm}  a \equiv 1 (\rm mod \ 2) \hspace{.15cm} {\rm with}  \hspace{.15cm}  a \neq 1. \\
              {\rm H}_{k+1}  &{\rm if} \hspace{.15cm} l({\rm H})=n=2k+1\,  (\geq 3)\hspace{.15cm} {\rm and}  \hspace{.15cm}  a \equiv 0 (\rm mod \ 2).\\
         {^+\rm H}_{k+1} &{\rm if} \hspace{.15cm}  l({\rm H})=n=2k+1\, (\geq 3) \hspace{.15cm} {\rm and}  \hspace{.15cm}  a \equiv 1 (\rm mod \ 2).\\
 \end{cases} 
 \]
 \end{definition}

\begin{remark} 
If ${\cal K}({\rm H})={^+\rm H}_{j+1}$ and  ${\cal K}({\rm H})$ is not a rational tangle, ${\cal K}({\rm H})$ is irreducible.
\end{remark}
We recursively define the central tangle ${\cal K}^{r}(\rm H)$ by
$${\cal K}^{r}(\rm H)= {\cal K} ({\cal K}^{r-1}(\rm H)).$$

Let $K$ be a $+$AAA knot, $\Pi$ a minimal projection of $K$ and $\rm F$ a primary tangle of $\Pi$.

Consider now the descending sequence (4) of central tangles of the primary tangle ${\rm F}$ which begins with $\rm F$, ends with
$ {\cal K}_{min} (\rm F) $:
\begin{equation} 
{\rm F} \supset {\cal K}(\rm F)\supset {\cal K}^{2}(\rm F)\supset \cdots \supset {\cal K}_{min} (\rm F)
\end{equation}

and which is such that each term ${\cal K}^{r-1}(\rm F)$  is followed by ${\cal K}^{r}(\rm F)$. The last term of (4) is the {\bf minimal central tangle} $ {\cal K}_{min} (\rm F) $.

\begin{definition}
The {\bf minimal central tangle}  ${\cal K}_{min}(\rm F)$ of $\rm F$ is defined by the condition that ${\cal K}_{min}(\rm F )\neq {\cal T_\emptyset}$ and ${\cal K}({\cal K}_{min}(\rm F)) ={\cal T_\emptyset}$. 
\end{definition}
Consequence of the definition: ${\cal K}_{min}(\rm F)$ is either rational, or irreducible, or of even breadth.

From the analysis in \S 5.2.1, we can deduce the following important properties of $\rm F$:
\begin{itemize}
\item[(a)] Since $\rm F$ is $*$-equivalent, $\rm F$ has the pairing property.
\item[(b)] All the central tangles of the descending sequence (4) and in particular ${\cal K}_{min} (\rm F)$, inherit the $*$-equivalence and the pairing property of $\rm F$. 
\item[(c)] $\rm F$ is slightly $*$-visible if and only ${\cal K}_{min} (\rm F)$ is slightly $*$-visible.
\item[(d)] $\rm F$ is a $*$-tangle if and only if ${\cal K}_{min} (\rm F)$ is a $*$-tangle.
\end{itemize}

Therefore, the minimal central tangle of a primary tangle plays an essential role in the proof of +AAA Theorem 5.1.

 \begin{figure}[h!]
\includegraphics[scale=0.6]{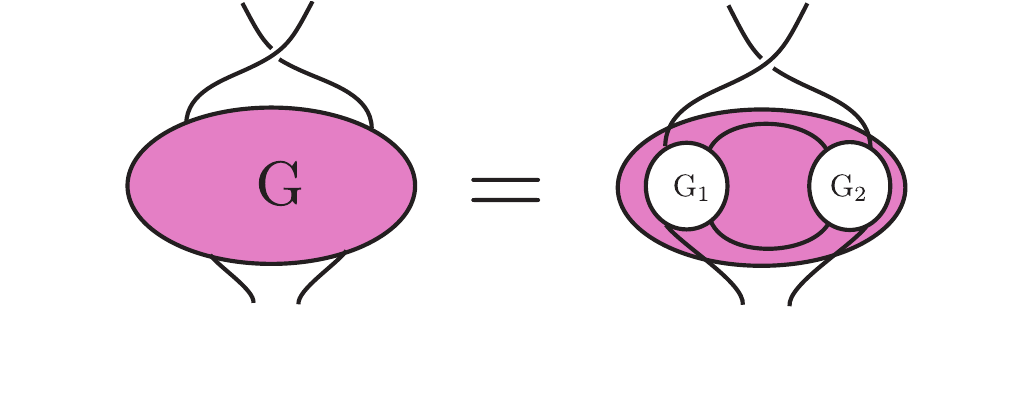}\vspace{-1em}
\caption{The tangle $\,^+{\rm G}$.}
\end{figure}
\subsection{Cross-plumbing and multiple cross-plumbing}
In this subsection, we introduce an $\alpha$-move on an irreducible tangle; this results in ``cross-plumbing" described as below. By its symmetry under $\rm R^*$, a cross-plumbing is a fundamental step towards the desired $*$-visibility stated in +AAA Theorem 5.1. If the irreducible tangle is $*$-equivalent, the tangles that make up the cross-plumbing are also $*$-equivalent (Proposition~5.1).

\subsubsection {$\alpha-$move and cross-plumbing}

Consider the irreducible tangle $^+{\rm G}$ where $\rm G$ is a sum tangle  $\rm G_1 \, \#  \, \rm G_2$ (Fig. 15); $\partial \rm G_1$ and $\partial \rm G_2$ are not necessarily essential Conway circles.
 \begin{figure}[h!]
\includegraphics[scale=0.5]{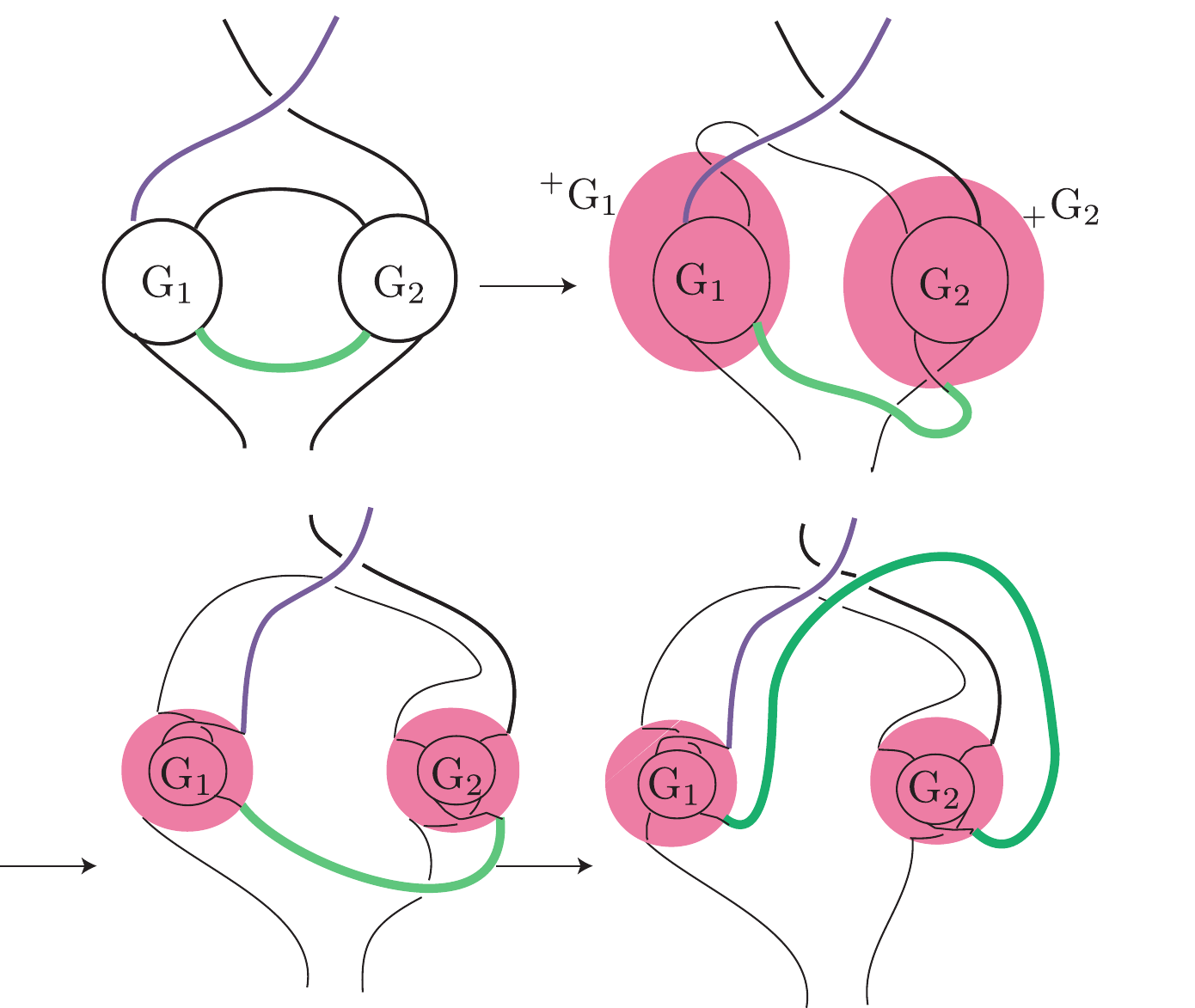}
\caption{An $\alpha-$move.}
\end{figure}

We now define an operation called {\bf $\alpha-$move} on the irreducible tangle $^+{\rm G}$ which consists 
\begin{itemize}
\item[(i)] first of creating four more crossings in performing two Reidemeister moves of type II, one north of $\rm G_1$ and the other south of $\rm G_2$, 
\item[(ii)] then by performing the isotopy of the strands as shown in Fig. 16. 
\end{itemize}

Finally, we get an untwisted main band that appears vertically on the figure and two bands plumbed on it, forming a $\bf X$-shape as shown in Fig. 17; one of the two band supports $^+\rm G_1$ and the other $_+\rm  G_ 2$. We say that $^+\rm G$ is a {\bf cross-plumbing} of $^+\rm G_1$ and $_+\rm  G_ 2$. 

\begin{figure}[h!]
\includegraphics[scale=0.45]{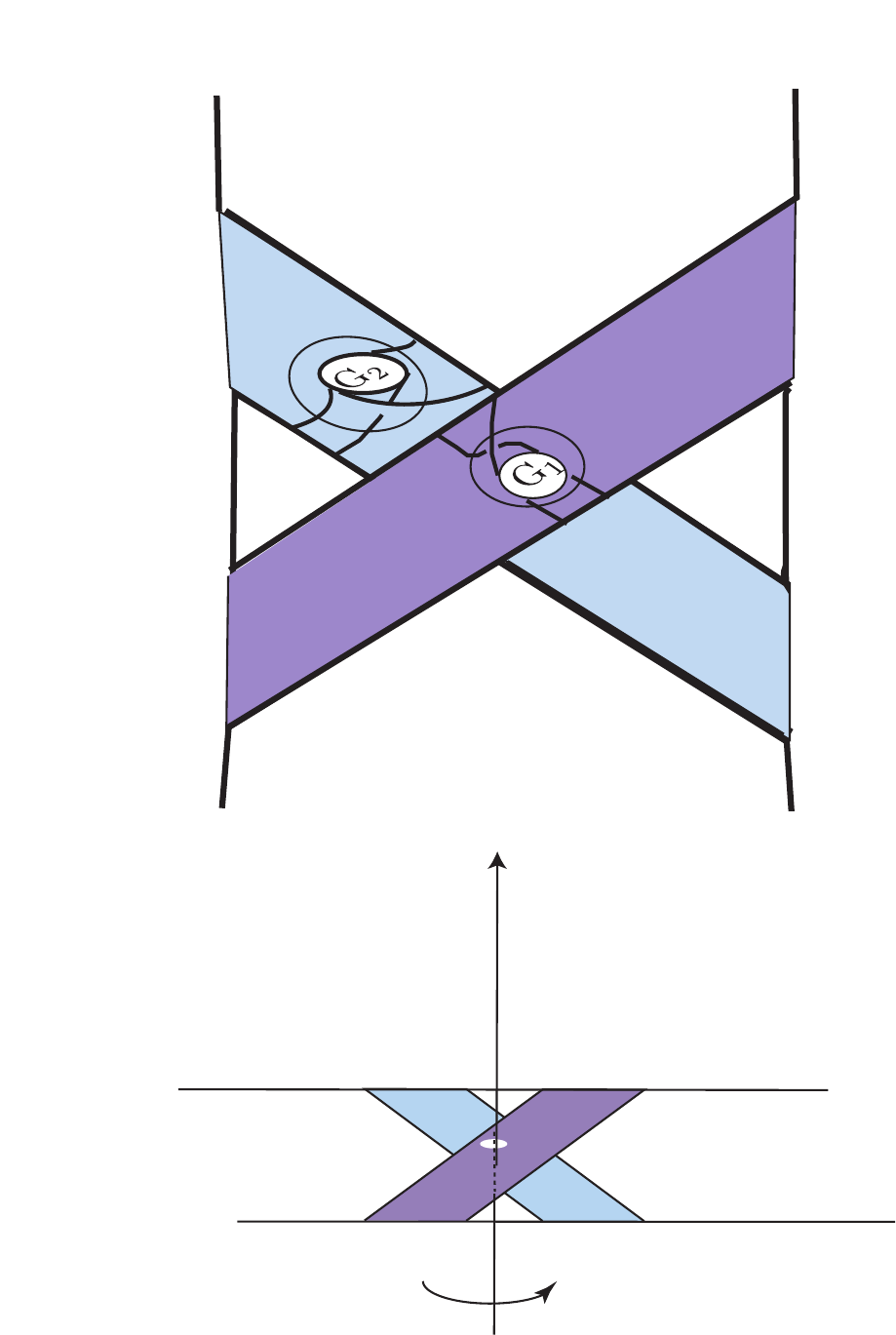}
\caption{A cross-plumbing.}
\end{figure}

\begin{figure}[h!]
\includegraphics[scale=0.352]{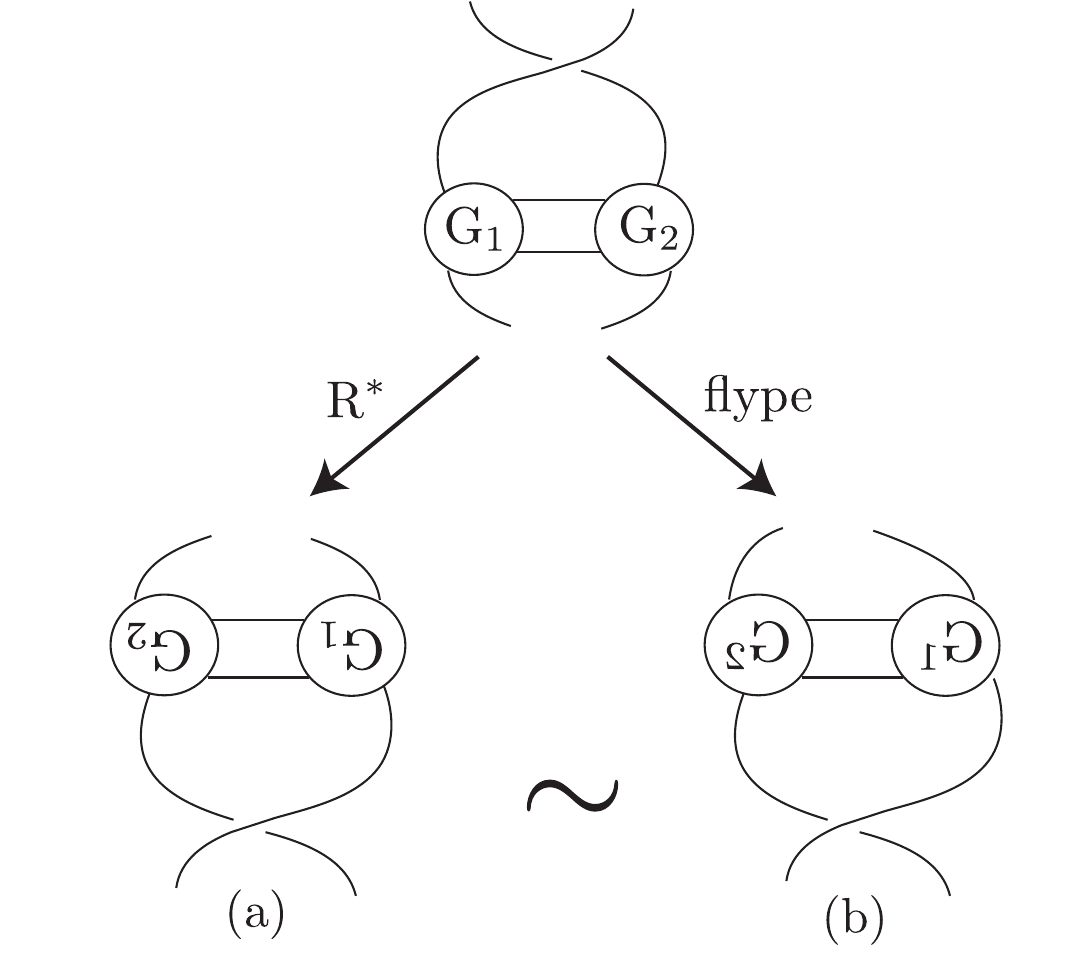}
\caption{$_+\rm G \sim _+\rm G^*$.}
\end{figure}

We have a simple but essential lemma:
\begin{lemma}
$\,^+\rm G$ is $*$-equivalent if and only if $\rm G$ is h-equivalent; analogously for  $\,_+\rm G$.
\end{lemma}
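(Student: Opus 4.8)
The claim to establish is Lemma~7.7: $\,^+G$ is $\star$-equivalent if and only if $G$ is $h$-equivalent (and the symmetric statement for $\,_+G$, which follows by reflecting the whole picture). The natural route is to make the geometric content of the $\alpha$-move precise and read off the equivalence from it, rather than to argue with flype-sequences abstractly. First I would recall the setup: $G = G_1 \# G_2$ is a connected sum of two subtangles plumbed on a vertical twisted central band, and $\,^+G$ adjoins a single crossing point at the North of $G$, as in Fig.~13 and Fig.~14. The key structural fact, already proved in the excerpt via the $\alpha$-move (Fig.~15), is that $\,^+G$ is isotopic (indeed flype-equivalent, since the Reidemeister~II moves and the strand isotopy used in the $\alpha$-move are ambient isotopies that do not change the tangle type) to the cross-plumbing of $G_1$ and $G_2$, i.e.\ to a non-twisted vertical band carrying $\,^+G_1$ on one X-branch and $\,_+G_2$ on the other.

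\textbf{Step 1: the forward direction.} Assume $\,^+G \sim (\,^+G)^*$. Apply the half-turn rotation $R^*$ to the cross-plumbing picture of Fig.~16. Because the central band of the cross-plumbing is \emph{non-twisted} and the two plumbed branches sit in the symmetric X-shape, $R^*$ interchanges the two branches: it carries the branch supporting $\,^+G_1$ to the branch supporting $\,_+G_2$, but with the extra crossing point relocated from the top to the bottom and the subtangle reflected about its horizontal axis. Concretely, $R^*$ of the $\,^+G_1$-branch is the $\,_+(G_1^h)$-branch. Matching this against the $\alpha$-move picture of $(\,^+G)^*$ and using that the cross-plumbing is determined (up to flypes) by its two decorated branches, $\,^+G \sim (\,^+G)^*$ forces $\,^+G_1 \sim \,^+(G_1^h)$, and hence $G_1 \sim G_1^h$; likewise $G_2 \sim G_2^h$. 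Since $h$-equivalence is compatible with connected sum along the band (the horizontal axis of $G = G_1 \# G_2$ restricts to the horizontal axis of each $G_i$, and $(G_1 \# G_2)^h = G_1^h \# G_2^h$ up to flypes), we conclude $G \sim G^h$, i.e.\ $G$ is $h$-equivalent.

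\textbf{Step 2: the converse.} Conversely, suppose $G$ is $h$-equivalent, so $G_1 \sim G_1^h$ and $G_2 \sim G_2^h$ after distributing the $h$-symmetry over the connected sum. Build the cross-plumbing of $G_1$ and $G_2$; by the $\alpha$-move it is flype-equivalent to $\,^+G$. Now perform $R^*$: as computed in Step~1, $R^*$ sends the cross-plumbing of $(G_1, G_2)$ to the cross-plumbing of $(G_2^h, G_1^h)$ with the roles of top/bottom crossings swapped. Using $G_i \sim G_i^h$ to replace each branch, and using that the X-shape is symmetric under interchanging the two branches, $R^*$ of the cross-plumbing is flype-equivalent back to the cross-plumbing of $(G_1, G_2)$, hence to $\,^+G$. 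Therefore $(\,^+G)^* \sim \,^+G$, which is exactly $\star$-equivalence of $\,^+G$. The statement for $\,_+G$ follows by applying the vertical reflection $R^v$ (or equivalently $\rho$) to the entire argument, which exchanges North and South and hence $\,^+(-)$ with $\,_+(-)$ while preserving $\star$- and $h$-equivalence.

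\textbf{Main obstacle.} The delicate point is bookkeeping the action of $R^*$ on the cross-plumbing: one must verify carefully that the half-turn does not merely permute the two X-branches but simultaneously moves the adjoined crossing point from top to bottom \emph{and} reflects each subtangle about its horizontal axis — i.e.\ that the net effect on a branch decorated by $\,^+G_i$ is $\,_+(G_i^h)$ and not, say, $\,_+(G_i^v)$ or $\,_+(G_i)$. This is a purely planar check on Fig.~16, but it is where sign/orientation conventions must be pinned down, and it is the only place the non-twistedness of the central band of the cross-plumbing (as opposed to the twisted band in $\,^+G$ itself) is genuinely used; everything else is routine manipulation of flype-equivalence and the already-established $\alpha$-move.
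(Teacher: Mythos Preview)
Your argument carries a genuine gap in scope, and it takes a much longer route than the paper intends. The lemma is stated for an arbitrary tangle $G$: the very next proposition applies it with $G$ replaced by each of the subtangles $G_i$, and those need not themselves split as a connected sum. By basing your proof on the decomposition $G=G_1\#G_2$ and on the cross-plumbing picture, you are essentially reproving the \emph{next} lemma (the statement about $G_1$ and $G_2$) rather than the present one, and the case of an indecomposable $G$ is left untreated. There is also a subtler issue: the cross-plumbing diagram produced by the $\alpha$-move is not a minimal alternating diagram of $\,^+G$ (it carries the extra Reidemeister~II crossings), so comparing it with its $R^*$-image ``up to flypes'' is not literally the relation $\,^+G\sim(\,^+G)^*$ you are trying to characterise.

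The paper calls the lemma ``straightforward'' because it needs only one flype and no decomposition of $G$. Applying $R^*$ to $\,^+G$ yields $\,_+(G^*)$: the northern crossing lands in the south and $G$ becomes $G^*$. Now perform the single flype pushing this crossing back to the north; such a flype rotates the inner tangle by $R^v$, giving $\,^+\bigl((G^*)^v\bigr)=\,^+(G^h)$ since $R^vR^*=R^h$. Hence $(\,^+G)^*\sim\,^+(G^h)$, and $\,^+G$ is $\star$-equivalent precisely when $G\sim G^h$. (This is exactly the computation the paper carries out explicitly later, in the proof of Proposition~7.14 for the DH-tangle.) The $\alpha$-move and the cross-plumbing enter only afterwards, to \emph{exhibit} a symmetric projection once $\star$-equivalence is already known, not to establish the present lemma.
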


Let $\,^+\rm G$ be given as in Fig.15. We have:
\begin{lemma}
If $\,^+\rm G$ is $*$-equivalent, $\rm G_1$ and $\rm G_2$ are h-equivalent.
\end{lemma}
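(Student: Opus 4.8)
The plan is to deduce Lemma 7.6 from Lemma 7.5 by analyzing how the connected-sum structure of $G = G_1 \ast G_2$ interacts with the $h$-involution $R^h$. By Lemma 7.5, $\,^+G$ is $\star$-equivalent if and only if $G$ is $h$-equivalent, i.e. $G \sim G^h$. So the whole content of Lemma 7.6 reduces to proving the equivalence:
\[
G \sim G^h \quad \Longleftrightarrow \quad G_1 \sim G_1^h \text{ and } G_2 \sim G_2^h.
\]

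First I would set up the picture from Fig. 17: $G$ is a connected sum of $G_1$ and $G_2$ plumbed along the (vertical) twisted central band, with $G_1$ sitting in the top slot and $G_2$ in the bottom slot. The horizontal axis of $G$ about which $R^h$ rotates by $\pi$ passes between the two subtangles (or through the central band). Applying $R^h$ to $G$ therefore swaps the top and bottom positions and simultaneously flips each subtangle about its own horizontal axis — but since $R^h$ also reflects the plumbing band, the image $G^h$ is the connected sum of $G_2^h$ in the top slot and $G_1^h$ in the bottom slot; i.e. as tangles $G^h \sim G_2^h \ast G_1^h$ (with the connected-sum orientation appropriately tracked). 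The key structural input is that flype-equivalence of a connected sum of tangles is governed componentwise by flype-equivalence of the pieces: $G_1 \ast G_2 \sim G_1' \ast G_2'$ (with matching cyclic/linear order of the factors along the band) if and only if $G_i \sim G_i'$ for each $i$. This is where the arborescent hypothesis and the canonical Bonahon–Siebenmann decomposition enter — flypes act within the twisted band diagram and within the rational/jewel pieces, and the structure tree records the factors as an ordered list of vertices, so an automorphism realizing the flype-equivalence must match up corresponding subtrees.

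The ``if'' direction is then immediate: if $G_1 \sim G_1^h$ and $G_2 \sim G_2^h$, then since the center of the half-turn lies between $G_1$ and $G_2$, performing $R^h$ and re-flyping each piece back into place gives $G \sim G^h$; one must only check that the extra half-turn of the central band and the swap of the two slots can be absorbed (the central band is a single twisted band and its half-turn is realized by a flype, so this is routine). For the ``only if'' direction: from $G \sim G^h \sim G_2^h \ast G_1^h$, compare with $G \sim G_1 \ast G_2$; by the componentwise principle the ordered factor list $(G_1, G_2)$ must be flype-matched to $(G_2^h, G_1^h)$. Because $R^h$ preserves which end of the central band a slot is attached to (it flips but does not reverse the band's two ``sides'' in the way that would interchange the roles), the matching is forced to be $G_1 \sim G_2^h$ and $G_2 \sim G_1^h$; applying $R^h$ to the first of these gives $G_1^h \sim G_2$, consistent, and composing yields $G_1 \sim G_2^h \sim (G_1^h)^h = G_1$ — no contradiction, but also no conclusion yet, so I must instead exploit that $R^h$ is an involution together with the fixed central band to pin down that each slot is matched to \emph{itself} after the half-turn, giving $G_1 \sim G_1^h$ and $G_2 \sim G_2^h$ directly.

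The main obstacle I anticipate is exactly this last bookkeeping step: making rigorous the claim that the half-turn $R^h$ of the connected sum, once we pass to flype-equivalence classes, decouples into the two independent conditions $G_i \sim G_i^h$ rather than the weaker ``crossed'' condition $G_1 \sim G_2^h$. The resolution should come from the precise geometry of Fig. 17 — the horizontal symmetry axis of $\,^+G$ is \emph{not} the axis exchanging $G_1$ and $G_2$ but the one fixing each slot while flipping it — so that $\star$-equivalence of $\,^+G$ translates, via Lemma 7.5 and the cross-plumbing picture of Fig. 16, into a symmetry that fixes each band of the $X$-shape setwise. I would make this explicit by drawing the cross-plumbed form: there $G_1$ sits on one crossing band and $G_2$ on the other, the order-4 rotation of the Structure Theorem squares to $R^h$ on each band, and $\star$-equivalence of $\,^+G$ is equivalent to each band being invariant up to flypes, i.e. $G_1 \sim G_1^h$ and $G_2 \sim G_2^h$. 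Once the figure is read correctly this is short; the danger is purely in orientation- and position-tracking, which I would handle by an explicit check on the boundary points NW, SW, SE, NE of each tangle.
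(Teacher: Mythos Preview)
Your reduction via the previous lemma to the equivalence ``$G \sim G^h$ iff $G_i \sim G_i^h$ for $i=1,2$'' is the right move and is exactly what the paper's pictorial proof (Fig.~17) encodes. The gap is purely geometric: you place $G_1$ in a top slot and $G_2$ in a bottom slot, so that $R^h$ swaps them and you are forced into the spurious ``crossed'' condition $G_1 \sim G_2^h$, which you then cannot resolve. That is not the configuration of Fig.~14. The central band of $\,^+G$ is vertical; the band inside $G$ carrying $G_1$ and $G_2$ is therefore horizontal, and the two subtangles sit side by side, each centred on the horizontal axis of $G$. Consequently $R^h$ fixes each slot setwise and sends $G_i \mapsto G_i^h$ in place, so $G^h = G_1^h \ast G_2^h$ with the \emph{same} linear order along the band. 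Since flypes along a twisted band diagram do not permute the inner subtangles, $G \sim G^h$ decomposes at once into $G_1 \sim G_1^h$ and $G_2 \sim G_2^h$. This is precisely what Fig.~17 displays and why the paper's proof is a single line.

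Your proposed fallback through the cross-plumbed form is unnecessary and slightly circular: the cross-plumbing is introduced \emph{after} this lemma, as the device that makes the symmetry visible on a projection; it is not needed to prove the flype-equivalence statement itself. Once you correct the left/right versus top/bottom reading of Fig.~14, the argument you outline in your final paragraph (``fixing each slot while flipping it'') is already the whole proof.
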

\begin{proof}
The proof is given by Fig. 18.
\end{proof}

From these two lemmas, one can deduce the following proposition which reduces to the case of $*$-equivalent tangles of smaller depth.
\begin{proposition}
If $^+\rm G$ is $*$-equivalent, $\,^+{\rm G}_i$ (and $\,_+{\rm G}_i$) is $*$-equivalent for $i=1,2.$
\end{proposition}
Proposition 5.1 can be generalised as follows:

\begin{figure}[h]
\includegraphics[scale=0.3]{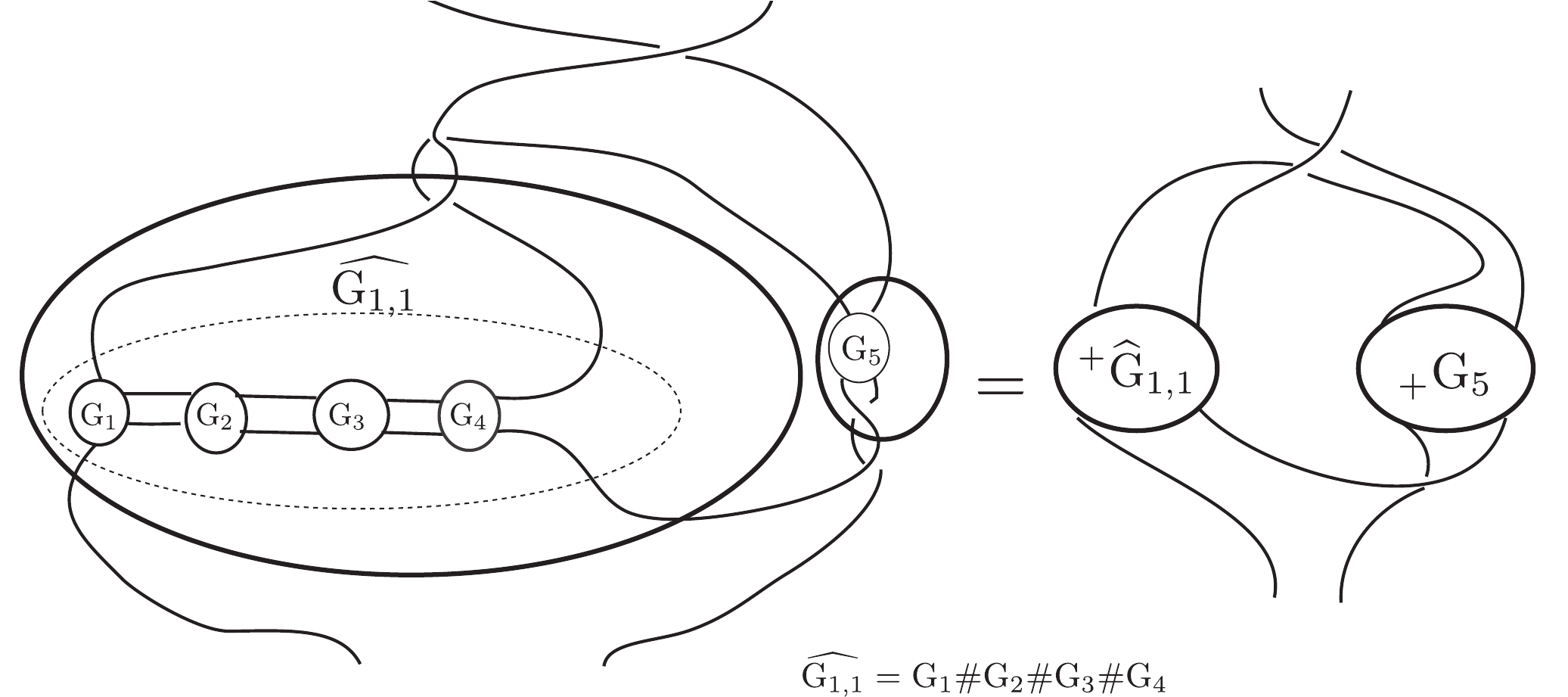}  
\caption{$^+G$ with a coarse sum on $\rm G={\rm G_1}\, \# \cdots \# \, {\rm G_5}$.}
\end{figure}
\begin{proposition}
Let $\rm G$ be a sum tangle with m ($\geq 2$) components, $\rm G={\rm G_1}\, \#.\cdots \#\,  {\rm G}_m$. Then
if $^+\rm G$ is $*$-equivalent, $^+{\rm G}_i$ is $*$-equivalent for each $i=1, \dots, m$ (idem for $_+{\rm G}_i$).
\end{proposition}
\subsubsection {Multiple cross-plumbing}
We generalise the notion of a cross-plumbing on an irreducible tangle $^+\rm G$ where $\rm G$ has two summands, that is $\rm G = \rm G_1 \, \# \rm  \, G_2$ to a multiple crossing-plumbing of  $^+\rm G$ where $\rm G$ has ${\rm s} (\geq 3)$ summands, that is $\rm G={\rm G_1}\#\cdots \# {\rm G_s}$.  

Consider the {\bf coarse sum} of $\rm G$ defined by the sum tangle $\rm G =\widehat{\rm G_{1,1}}\, \# \,{\rm G_s}$ where
$\widehat{\rm G_{1,1}}=\rm {G_1} \,\# \cdots  \# \,\rm{G_{s-1}}$ and 
$\rm G_s$ is the rightmost tangle of $\rm G=\linebreak \rm {G_1}\, \# \cdots \# \,  \rm G_s$.

We denote by $\widehat{\rm G_{1,j}}$ the sum tangle :
$$\widehat{\rm G_{1,j}}=\rm {G_1}\, \# \cdots  \#\, \rm G_{s-j}.$$ 

As shown in Fig.19, $\widehat{\rm G_{1,1}}$ and $\rm G_s$ now play the role of $\rm {G_1}$ and $\rm {G_2}$ of $^+\rm G$ described in Fig. 15. By a {\bf coarse $\alpha$-move}, we realise $^+\rm G$ as cross-plumbing of $^+\widehat{\rm G_{1,1}}$ with ${_+\rm G_s}$. In some sense, we ``extract " ${_+\rm G_s}$ from $^+\rm G$.

On $^+\widehat{\rm G_{1,1}}$, we again perform a coarse $\alpha$-move by considering $\widehat{\rm G_{1,1}}$ as coarse sum $\widehat{\rm G_{1,1}}=\widehat{\rm G_{1,2}} \# \rm G_{s-1}$. The tangle  $^+\widehat{\rm G_{1,1}}$ is then realised as {\bf coarse cross-plumbing} of $^+\widehat{\rm G_{1,2}}$ with $\rm_+{G_{s-1}}$.

And so on, we perform on each $^+\widehat{\rm G_{1,j}}$ for $\rm j=1, \dots, s-3$, a coarse $\alpha$-move which ``extracts " $\rm_+{G_{s-j}}$ until we get to $^+\widehat{\rm G_{1,s-2}}$.  At the end on $\widehat {\rm G_{1,s-2}}= G_1 \#  G_2$, we perform an $\alpha$-move which reveals this tangle as cross-plumbing of $^+{\rm G}_1$ with $_+{\rm G}_2$.

Therefore, by performing a sequence of $\alpha$-moves on  $^+\rm G$ , we realise $^+\rm G$ as a {\bf multiple cross-plumbing} of $^+{\rm G}_1$ with the $\rm {s-1}$ tangles  $_+{\rm G}_i$ where $i=2, \dots, \rm s$. 

\begin{remark}
By Proposition 5.2, if $^+\rm G$ is $*$-equivalent, $^+{\rm G}_i$ is $*$-equivalent for eachs $i=1, \dots, \rm s$ with $\rm s \geq 2$ (idem for $_+{\rm G}_i$). Therefore, if we can isotope every $^+{\rm G}_i$ into a $*$-visible tangle, a multiple cross-plumbing of these tangles gives rise to a $*$-visible tangle isotopic to $^+\rm G$.
\end{remark}

\subsection{Outline of the proof of Theorem 5.1}
Let $K$  be a +AAA knot. By Theorem 4.1, $K$ has a minimal projection $\Pi$ of type I (Fig. 7) with an invariant Haseman circle which decomposes $\Pi$ into two tangles $\rm F$ and $\widehat{\rm F}$ fulfilling the condition of being $*$-equivalent. Being $*$-equivalent tangles, $\rm F$ and $\widehat{\rm F}$ are both strictly or both slightly $*$-visible.

The goal is to show that $\rm F$ can be isotoped into a $*$-visible tangle $\cal F$. If so, $\widehat{\rm F}$ can also be isotoped into the $*$-visible tangle $\widehat{\cal F}$ and we are done: the resulting projection $ \Pi'$ isotopic to $\Pi$ is obtained from the ``plumbing" of $\cal F$ with $\widehat{\cal F}$ which is invariant under a twisted rotation of order 4 (as shown in Fig. 7).

Moreover, since the strict or slight $*$-visibility of $\rm F$ is completely determined by ${\cal K}_{min}(\rm F)$ (\S 5.2.2), it suffices to focus on $*$-equivalent minimal central tangles.

If ${\cal K}_{min}(\rm F)$ is a $*$-tangle, by using the pairing property on each term of the sequence (4), we can get a $*$-visible tangle isotopic to $\rm F$ and we are done. Therefore, our constructive proof of +AAA Visibility Theorem should only deal with $*$-equivalent minimal central tangles.

By \S 5.2.2, the $*$-equivalent minimal central tangles are either (1) of even breadth or (2) rational or (3) irreducible. 

(1) If ${\cal K}_{min}(\rm F)$ has its even breadth, ${\cal K}_{min}(\rm F)$ is strictly$*$-visible. By using the pairing property on each term of the sequence (4), we realise a $*$-visible primary tangle $\rm F_0$ which is alternating. We are done with $\cal F = \rm F_0$; as expected, the ``plumbing"  of $\rm F_0$ with $\widehat{\rm F_0}$ shown in Fig. 7 results in a minimal achiral projection via Definition 2.5.

(2) If ${\cal K}_{min}(\rm F)$ is rational, ${\cal K}_{min}(\rm F)$ is a $*$-tangle (in an appropriated plumbing form (see \S 5.6) or in its pillow-form (\cite{crom} Theorem 8.2), a rational tangle has many visible symmetries; by rotating it by an angle $\pi$ aroundt any principal axis (North-South, East-West) or any axis orthogonal to the projection plane produces the same isotopy of tangle). 

By using the pairing property on each term of the sequence(4), we can thus present a $*$-visible tangle isotopic to $\rm F$ which is not alternating in general. Therefore, the resulting achiral projection is not minimal in general.

(3) Our proof of +AAA Theorem 5.1 is  focused on $*$-equivalent minimal central tangles and is done by induction on their depth such that the induction hypothesis  $ ({\bf{{\rm P}_ n}})$ of +AAA Visibility Theorem 5.1 is
\begin{center}
 $ ({\bf{{\rm P}_ n}})$ \quad \quad
{\bf $*$-equivalent alternating arborescent tangles of depth $ \leq n$ are $*$-tangles.}
\end{center}

By (1) and (2), the only case remaining to be treated is the one where the $*$-equivalent minimal central tangles are irreducible.

 As we will see in \S5.5, to prove that an irreducible $*$-equivalent tangle is a $*$-tangle, we will perform multiple cross-plumbing which involves $*$-equivalent tangles of smaller depth and therefore, gives the induction step.

\subsection{Case of irreducible $*$-equivalent tangles}
The aim is to prove that the irreducible $*$-equivalent minimal central tangles ${\cal K}_{min}(\rm F)$ are $*$-tangles.

From now on, we note an irreducible $*$-equivalent tangle ${\cal K}_{min}(\rm F)$ by $^+\rm G$ of breadth $l(\rm G)=s \geq 2$.

\begin{figure}[h]
\includegraphics[scale=0.350]{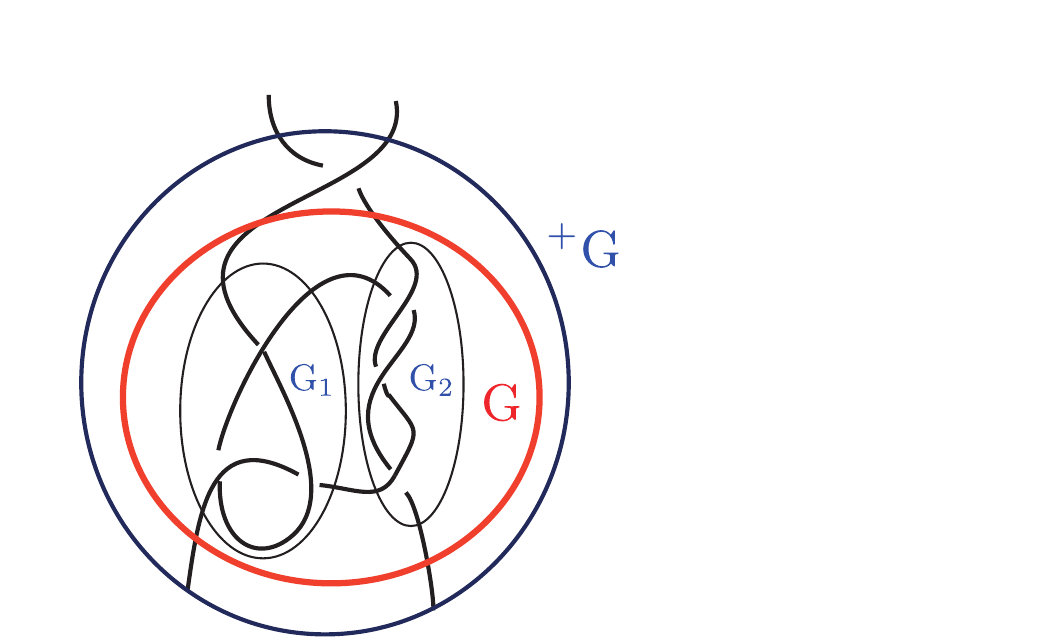}
\caption{A tangle $^+\rm G$ with  $ \mu(^+\rm G)=2$.}
\end{figure}
\noindent {\bf Claim.}
Let $^+\rm G$ be an irreducible tangle, i.e.  $^+\rm G$ has principal decomposition
$$\langle  ^+\rm G; (\rm G): \pm1\rangle.$$
Suppose that $\rm G$ has a principal decomposition
 $$\langle \rm G; (\rm G_1, \dots \rm G_s),0 \rangle.$$
We write $\mu_i=\mu (\rm G_i )$ for each $i=1,\dots, s$ and $\mu_{\rm M}=\max \{\mu (\rm G_1), \dots ,\mu (\rm G_s )\}$.

Since $\partial \rm G_i$ is an essential Conway circle of $\Pi$, we have
$ \mu (\rm G)= \mu_{\rm M} +1$ and
$$ \mu( ^+\rm G)= \mu( \rm G)+1=\mu_{\rm M}+2. $$

\noindent {\bf Consequence.} The minimal depth of an irreducible tangle is 2.
\vspace{.4cm}

$\bullet$  $\bf (\rm P_n)$ is true for $n=2$:

$(*)$ Case where $\mu (^+\rm G)=2$ and $\rm G$ is of breadth 2. 

The tangle $\rm G$ then has a principal decomposition:
$$\langle {\rm G};({ \rm G_1}, {\rm G_2});0 \rangle$$
where $\rm G_1$ and $\rm G_2$ are both rational. By performing an $\alpha$-move on $^+\rm G$, we get a cross-plumbing of $\rm ^+G_1$ and $\rm _+G_2$ which are both rational and therefore all two of the $*$-tangles. Via the cross-plumbing, we obtain an isotopic tangle to $^+\rm G$ which is invariant under $\rm R^*$.
 
$(**)$ Case where $\mu (^+\rm G)=2$ and $\rm G$ has breadth $l(\rm G)= \rm s \geq 3$. 

Since $\mu (^+\rm G)=2$, each $\rm G_j$ of the principal decomposition 
$$ \langle \rm G;( \rm G_1, \dots ,\rm G_s);0\rangle.$$
is rational for $\rm j=1, \dots, s$. Thus, just like $\rm G_j$, the tangles $_+\rm G _j$ and $^+\rm G _j$ being equally rational, are $*$-tangles. Via the multiple cross-plumbing described in \S 5.3.2, we can produce a $*$-visible tangle isotopic to $^+\rm G$.

Note that $\mu (^+\rm G)=2$ corresponds to the case where $\rm G$ is a Montesinos tangle (Definition 6.3).

$\bf (\rm P_2)$ being true, we can proceed to the induction step.

\begin{figure}[h!]
\includegraphics[scale=0.4]{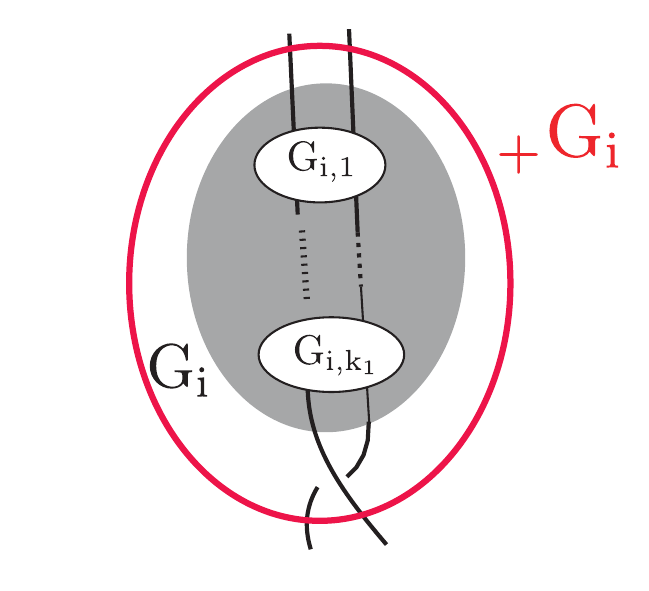}  
\caption{ $\mu (_+\rm G_i )= \mu (\rm G_i)$.}
\end{figure}

How are $\mu(_+ \rm G_i)$ and $ \mu( ^+\rm G_i)$ related to $\mu_i$?

\medskip
\noindent {\bf Claim.} For $i \neq 0$,
$$\mu(_+ \rm G_i) = \mu( \rm G_i)= \mu_i.$$
\begin{proof}
By definition of the principal decomposition of $\rm G$, the boundary $\partial \rm G_i$ for $i \neq 0$ is an essential Conway circle of $\Pi$ and the additional crossing south of $\rm G_i$ can be assimilated to the principal twisted band of $\rm G_i$ (see Fig. 21). So $_+ \rm G_i$ is not irreducible and $$\mu(_+ \rm G_i) = \mu_i.\vspace{-2em}$$
\end{proof}

$\bullet$ By induction hypothesis, $(\bf \rm P_{\it n})$ is true: each $*$-equivalent arborescent alternating tangle with depth $ \leq n$ is a $*$-tangle.

Now consider the case where $ ^+\rm G$ is a $*$-equivalent irreducible arborescent alternating tangle with $\mu {(^+\rm G)}= n+1$.

Let $^+\rm G$ be with $\mu {(^+\rm G)}= n+1$. 

As $\mu (^+\rm G)= 2 + \mu_{\rm M}$ where $\mu_{\rm M}=\max \{\mu (\rm G_1), \dots ,\mu (\rm G_s )\}$, we have 
$$ n+1= 2 + u_{\rm M}.$$
Hence $\mu_{\rm M}=n-1$. This implies that each $\mu_i \leq n-1$

Let $\rm G$ be with principal decomposition:
$$ \langle{ \rm G;( \rm G_1, \dots ,\rm G_s)};b \rangle.$$ 

By gathering the $b$ crossings into a tangle $\rm G_0$, we can consider $\rm G$ as a sum tangle with tangles of depth $\leq n-1$
$$\rm G= \rm G_0 \,\, \# \rm G_1 \, \#\cdots \, \# \,  \rm G_s.$$

Being rational, $^+\rm G_0$ is a $*$-tangle.

Proposition 5.2 implies that  $_+ \rm G_i$ and $^+ \rm G_i$ are $*$-equivalent for each $i=1, \dots, \rm s$.

As described in \S5.3.2, we consider a sequence of coarse sums and $\alpha$-moves which results in a multiple cross-plumbing on ${^+\rm G}$ with $*$-equivalent tangles.

Since $_+\rm G_i $ of depth $\mu_i \leq n-1$ is $*$-equivalent, by $(\bf \rm P_{\it n})$, $_+\rm G_i $ is also a $*$-tangle. Therefore, by multiple cross-plumbing of $*$-tangles, we can exhibit $*$-visibility on a non-alternating tangle $\cal{G}$ isotopic to $^+\rm G$.

So the statement $(\bf \rm P_{{\it n}+1})$ is true for all $n \geq1$.. 

Therefore, we have shown that $*$-equivalent irreducible tangles are $*$-tangles.

\medskip
\noindent {\bf Summary.} 
It follows that whenever ${\cal K}_{min}(\rm F)$ is rational or irreducible or of even breadth and endowed with the pairing property, we can isotope the primary tangle $\rm F$ in a $*$-visible tangle $\cal F$ (which is not necessarily alternating) , giving rise to an achiral projection for the +AAA knot $K$ via the projection of Fig. 7. See examples in \S 6.

\subsection{Rational tangles}
Let $T$ be a rational tangle with its underlying disc $\Delta$. Consider the canonical Conway circles which are concentric, delimiting twisted annuli except the most interior twisted band diagram which is a spire. We denote these twisted band diagrams by $P(x_i)$ with $i = 1 , \dots , u$; $P(x_u)$ is a spire and the other ones are twisted annuli.

As in Definition 5.3, we define an order relation between these twisted annuli and the spire:

$P(x_k) > P(x_j)$ if and only if $P(x_j)$ is contained in the inner disc of $P(x_k)$.

Let us consider the maximal chain $P(x_1) > \cdots  > P(x_u)$ of the rational tangle $T$. We denote this chain by $P(x_1 , \dots , x_u)$.  Odd weights prevent to exhibit a symmetric form. We could use $\alpha-$moves. However there is a better way to proceed. This argument is present more or less explicitly several times in the literature (see (\cite{kala}, {\cite{ haka}).

Since the projection is alternating, the signs of the weights alternate. Without any real loss of generality, we assume that $x_1 > 0$. Let $a_i = (-1)^{i+1}x_i$. As usual, we define the rational number $p/q$  by the continued fraction: 

$${p \over q} = {[a_1, a_2, \dots, a_n]}=a_1+ \dfrac{1}{a_2 +\dfrac{1}{{\ddots} +\dfrac{1}{a_n}}}$$
 \begin{figure}[h!]
\includegraphics[scale=0.4]{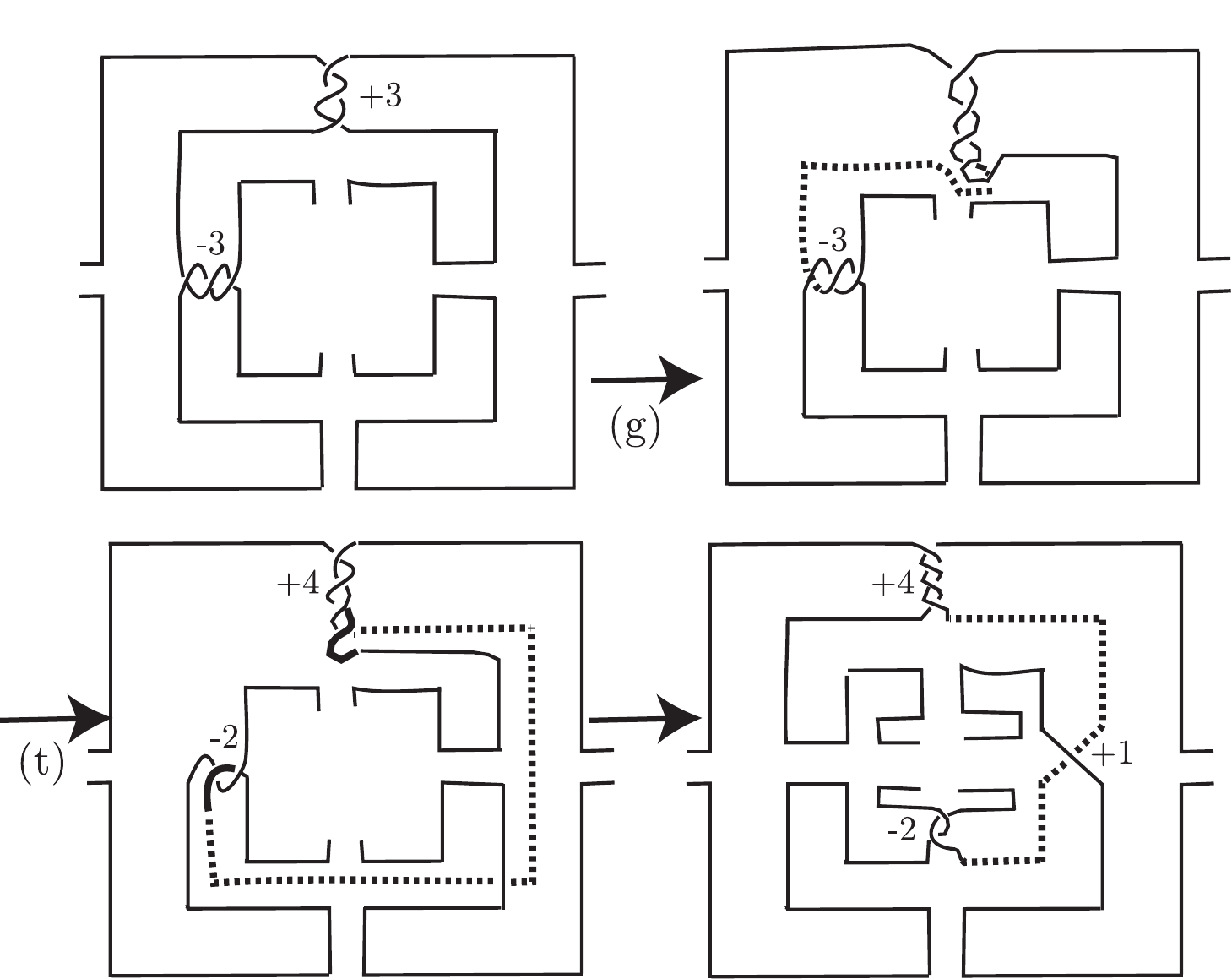} 
\caption{A gimmick (g) followed by a transfer move (t).}
\end{figure}

We denote this continued fraction expansion by $C(a_1 , \dots , a_u)$. Let $p/q$ be such that $p$ and $q$ are coprime and both odd. Consider $C(b_1 , \dots , b_v)$ the continued fraction expansion of $p/q$ with each $b_j$ even except for $b_v$ if $p$. Let $y_j = (-1)^{j+1}b_j$. It is possible to modify the plumbing $P(x_1 , \dots , x_u)$ in plumbing $P(y_1 , \dots , y_v)$ by a sequence  of operations which correspond to a $\pm$ blow-up in plumbing calculus (see Walter Neumann \cite{neum}) and also to Lagrange Formula (see Cromwell's book p.204 \cite{crom}) in continued fraction expansions. The operation modifies the plumbing 
$$P(z_1 , \dots , z_i , z_{i+1} , \dots , z_w)$$
 to the plumbing
 $$P(z_1 , \dots , z_{i-1} , z_i\pm1 , \pm1 , z_{i+1}\pm1 , z_{i+2} , \dots , z_w)$$

The corresponding tangles are isotopic (see Definition 4.1). Note that the plumbing notation takes care the signs elegantly. The  {\bf $\pm$ blow-up} operation is defined as follows. It is the combination of a {\bf gimmick} and of a {\bf transfer move} in the sense of Kauffman-Lambropoulou (\cite{kala}) including a rotation of angle $\pi /2$ for the inner tangle. The gimmick introduces two crossings of opposite sign by a Reidemeister move of type II at the end of a twist. The transfer move pushes a non-alternating arc. Its new position creates the $\pm 1$ between the $i$th and the $(i+1)$th entry in the plumbing. See Fig. 23.

At some point in the sequence of operations,  it is possible to encounter a weight equal to zero. In this case the following plumbings are isotopic.
\begin{align*}
P(z_1 , \dots , z_{i-1} , 0 , z_{i+1} , \dots , z_w) &\equiv P(z_1 , \dots , z_{i-1} + z_{i+1} , \dots , z_w)\\
P(z_1 , \dots , z_{w-2} , z_{w-1} , 0) &\equiv P(z_1 , \dots , z_{w-2})
\end{align*}

The isotopies are easily visible on the corresponding diagrams. Of course, this can also be checked on continued fractions. These last equivalences are called 0-absorptions by Walter Neumann.

\begin{remark}\ 
\begin{enumerate}
\item We assume that the first weight $x_1$ is $>0$, but there is no real difference in the arguments if we have $x_1 < 0$. Just use $-$blow- ups instead.
\item The last weight $y_v$ is odd if $p$ and $q$ are both odd. This fact does not prevent $P(y_1 , \dots , y_v)$ from being invariant by a half turn since the innermost circle contains a spire.
\end{enumerate}
\end{remark}

\begin{example}
Consider the rational tangle  $P(1 , -4)$. It is isotopic to the tangle $P(2 ,2 ,2 ,2)$ which exhibits $*$-visibility.
\end{example}
\begin{figure}[h!]
\includegraphics[scale=0.30]{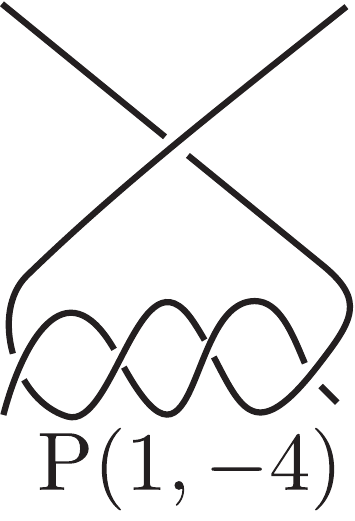}
\caption{The rational tangle $P(1, -4)$.}
\end{figure}

\section{The knots of Dasbach-Hougardy and Stoimenow}

Knots (not necessarily alternating) that are +achiral but not $-$achiral are rather rare among achiral knots. Among the 20 achiral knots (all alternating) with crossing number $c <12$, none is only +achiral. 

According to Hoste-Thistlethwaite-Weeks (\cite{hothwe}) we have:
\begin{itemize}
\item[1)] For $c = 12$ there are 54 alternating achiral knots. Exactly one of them is only +achiral. It was recognized by Haseman (it is her knot 59=60) and also earlier by Tait, with a vocabulary different from what is used  today. See \cite{quwe}. There are also 4 non-alternating achiral knots. No one is only +achiral.
\item[2)] For $c = 14$ there are 223 alternating  achiral knots. Among them 5 are only +achiral. There are also 51 non-alternating achiral knots. Exactly one of them is only +achiral.
\item[3)] For $c = 16$ there are 1049 alternating  achiral knots. Among them 40 are only +achiral. There are also 490 non-alternating achiral knots, with  25 only +achiral. 
\item[4)] All in all there are 1'701'935 non-trivial knots with $c \leq 16$. There are 491'327 alternating knots and 1'201'608 non-alternating ones. There are 1'892 achiral knots (including a surprising one with $15$ crossings). Among them 1'346 are alternating and 546 non-alternating. There are 82 knots which are only +achiral; 56 are alternating and 26 non-alternating.
\item[5)] There are therefore 1'290 alternating achiral knots with $c \leq 16$ which are $-$achiral. For all, the $-$achirality is visible on a minimal projection, according to our result \cite{erquwe}.
\end{itemize}
As alternating knots which are $-$achiral always have minimal achiral projections, we now only consider alternating knots that are only $+$achiral. In particular those for which there exists no achiral minimal projection. The first example of this class was given by Dasbach-Hougardy in \cite{daho}.  Since it is arborescent, according to \cite{erquwe}, it has a minimal projection of Type I as shown in Fig. 6. Its tangle $\rm F$ called {\bf Dasbach-Hougardy tangle} has the form shown in Fig. 24.

\begin{figure}
\includegraphics[scale=.5]{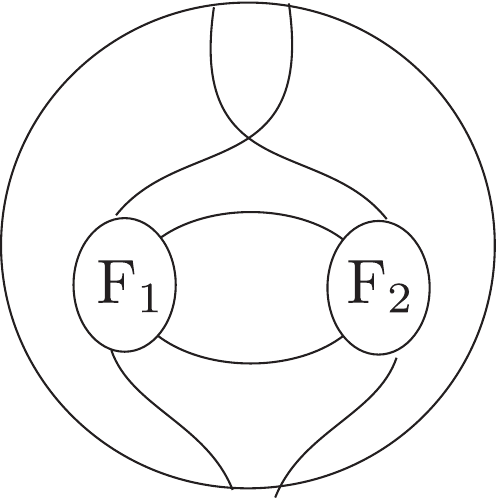} 
\caption{The shape of Dasbach-Hougardy tangle.}
\end{figure}

\begin{definition}
Let $K$ be an arborescent alternating knot such that $K$ has an alternating projection of Type I with its primary tangle $\rm F$ formed of two tangles $\rm F_1$ and $\rm F_2$ as described in Fig. 24. If $K$ satisfies the following conditions:
\begin{enumerate}
\item $\rm F \sim \rm F^*$ 
\item $\rm F_1$ is not flype-equivalent to one of the tangles  $\rm F_2$, $\rm F_2 ^v$, $\rm F_2 ^h$ and $\rm F_2 ^3$.
\end{enumerate}
then $K$ is called a {\bf DH-knot}.
\end{definition}

\begin{proposition}\hspace{-1ex}
Let $K$ be a DH-knot. Then $K$ is +achiral but not $-$achiral.
\end{proposition}
\begin{proof}
Condition 1) implies that $K$ is +achiral and
Condition 2) implies that $\rm F$ is not flype-equivalent to $\rm F^v$ or $\rm F^h$. Therefore, by Proposition 6.3 in \cite{erquwe}, $K$ is not $-$achiral.
\end{proof}

\begin{remark}\ 
\begin{enumerate}
\item   The north crossing on the irreducible tangle in Fig. 25 prevents the minimal  projection from being achiral. In other words, such a knot has no minimal projection which is +achiral.  However, the $\alpha$-move provides a non-minimal achiral projection. 
\item If the tangles $\rm F_i$ are not arborescent. there is not always a minimal projection where the +achirality is visible. But we do not always have a method to present an achiral projection.

Denote by $c_i$ the number of crossings of $F_i$. Hence the number $c$ of crossings of the minimal projection of the knot is $2 (c_1 + c_2 + 1)$. To avoid degeneracy we must have $c_i \geq 3$. 

Let us restrict $F_i$ to rational tangles. 
Let $C(a_1 , \dots , a_u)$ be the Conway word for a rational tangle.

There are two rational tangles with crossing number equal to 3: $C(1 ,2)$ and $C(3)$. Hence there is exactly one knot with $c = 14$. It is the original Dasbach-Hougardy knot with $\rm F_1 = C(1 , 2)$ and $\rm F_2 = C(3)$. Its Hoste-Thistlethwaite-Weeks notation is $14-10435 (a)$. 
\end{enumerate}
\end{remark}

We now consider DH-knots with 16 crossings. There are 4 rational tangles with crossing number equal to 4: $C(1 , 1 , 2)$ , $C(1 , 3)$ , $C(2 , 2)$ and $C(4)$. We now consider DH-knots with 16 crossings. There are 4 rational tangles with crossing numbers equal to 4: $C(1 , 1 , 2)$ , $C(1 , 3)$ , $C(2 , 2)$ and $C(4)$. But $C(1 , 3)$ and $C(4)$ should be discarded because we would  get a link. Therefore, we can construct  DH-knots as follows.

\begin{itemize}
\item[1)] $\rm F_1 = C(1 ,2)$ and $\rm F_2 = C(1 , 1 , 2)$; this is the knot $16-178893 (a)$.
\item[2)] $\rm F_1 = C(1 ,2)$ and $\rm F_2 = C(2 , 2)$; this is the knot $16-125918 (a)$.
\item[3)] $\rm F_1 = C(3)$ and $\rm F_2 = C(1 , 1 , 2)$; this is the knot $16-223267 (a)$.
\item[4)] $\rm F_1 = C(3)$ and $\rm F_2 = C(2 , 2)$; this is the knot $16-223382 (a)$.
\item[5)] $\rm F_1 = C(1 , 2)$ , $\rm F_2 = C(3)$: By adding one supplementary crossing in the central band connecting $\rm F_1$ and $\rm F_2$, we get the knot $16-220003 (a)$.
\end{itemize}

So there are six DH-knots with $c \leq 16$. These knots were listed by Alexander Stoimenow in \cite{stoi} as knots for which no projection is known to be achiral. The method we present provides for each of these six knots a non-minimal achiral projection.
\begin{figure}[h]
\includegraphics[scale=0.5]{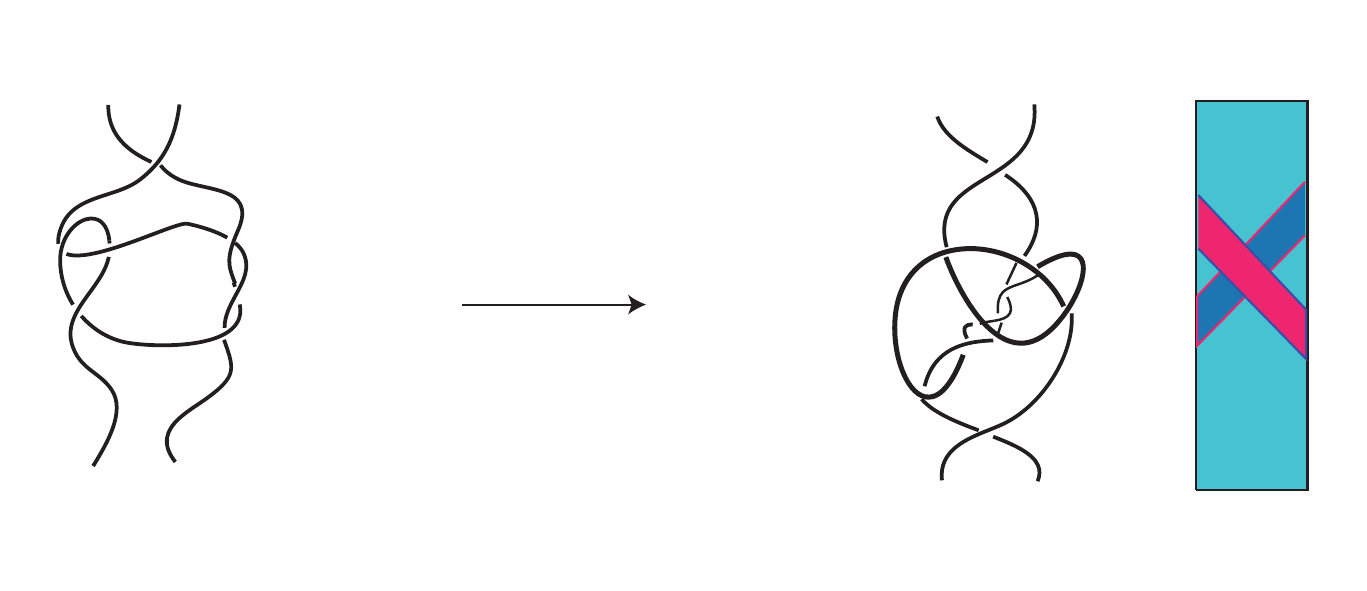} 
\caption{An example of a Dasbach-Hougardy tangle under a symmetrized form.}
\end{figure}
Fig. 25 illustrates our procedure applied to the original Dasbach-Hougardy knot with a cross-plumbing of 3 twisted bands. 

It is clear that there are  DH-knots for  every even crossing number $\geq 14$. Moreover if one requires that the tangles $F_i$ be rational, an exhaustive list can be obtained.
Alexander Stoimenow also listed in \cite{stoi} four +achiral knots for which no achiral projection is known. However they are not alternating and therefore our method cannot be used. Apparently, the existence pf an achiral projection for these knots is still unknown.
By Knotscape, these knots are only +achiral with order equal to 4.

\begin{definition}
Let $K$ be a +AAA knot. The depth of $K$ is the depth of the primary tangle of a minimal projection of $K$.
\end{definition} 

Here is a small analysis of knots with small depth.

{\it Depth 0.}  The +AAA knots with depth 0 are rational knots. The structure tree for any rational knot is an interval with $N$ vertices. The sign of the weights alternates along the interval. If the knot is achiral, the automorphism $\widetilde{\Phi}$ of the tree is the reflection through the middle of the interval. So $N = 2k$. The primary tangle is one half of the interval and  properties of the +achirality automorphism of the structure tree implies that the weights satisfy the equality $a_i = a_{2k-i+1}$ for $1 \leq i \leq k$.

On the other hand, rational knots are generally classified  by an integer $p \geq 2$ and an integer $q$ prime to $p$ such that $1 \leq q \leq (p-1)$. The existence of the equality
$a_i = a_{2k-i+1}$ for $1 \leq i \leq k$ is equivalent to $q^2 \equiv -1~\bmod p$. This condition on the integers $p$ and $q$, is a classical condition necessary and sufficient for a rational knot to be +achiral.

In conclusion, we see that Key Theorem 3.1 enables us to recover without difficulty this well-known condition of the achirality of rational knots. As a bonus, we obtain that the order of +achirality is equal to 4.

Achirality is generally not visible on a minimal projection. The condition for this is that the weights are even. A projection that does not satisfy this condition can be changed to a projection with even weights by the systematic use of gimmicks, thereby adding new crossings.

\begin{definition}
A {\bf Montesinos tangle} is an arborescent tangle with the principal decomposition $\langle {\rm F ;( \rm F_1, \dots, \rm F}_{n});0 \rangle$ where every tangle $\rm F_i$ for $i=1, \dots ,n$, is a rational tangle.
\end{definition}
{\it Depth 1.} The +AAA knots of depth 1 have a Montesinos tangle for the primary tangle. But these knots are not Montesinos knots.

Montesinos knots are never achiral. For those which are alternating, one can easily prove this fact since the structure tree of Montesinos knots does not admit automorphism which satisfies the conditions of achirality.

{\it Depth 2.} Typical examples are Dasbach-Hougardy knots. The smallest depth that may require the use of the $\alpha$-move is depth 2.

\section{Order 4 Theorem 7.1}
\subsection{Statement and proof of order 4 Theorem 7.1}

\begin{theorem}[Order 4 Theorem]
Let $K$ be an alternating +achiral knot without minimal  $+$achiral projection. Then the order of +achirality of $K$ is equal to 4.
\end{theorem}

Note that we do not assume that $K$ is arborescent. The theorem is valid in the class of +achiral alternating knots.

\begin{proof}

If a Haseman circle is invariant, there is nothing to prove since in this case the order of + achirality is equal to 4. See \S 4.

Suppose therefore that there is a jewel $D$ which is invariant. We perform on $\Pi$ the construction we have already done in \cite{erquwe} called the Filling Construction. For the convenience of the reader, we remind you. 

We start with a $\pm$achiral knot and by a minimal projection $\Pi$ of $K$ on $S^2$ with a jewel $D$ invariant by the automorphism $\widetilde{\Phi}$  which acts on the structure tree ${\cal A}(K)$. Let $\gamma_1 , \dots , \gamma_k$ be the Haseman circles which are the boundary components of $D$. Each $\gamma_i$ bounds in $S^2$ a disc $\Delta_i$ which does not meet the interior of $D$. The projection $\Pi$ intersects $\gamma_i$ at four points. Inside $\Delta_i$, the projection $\Pi$ joins either opposite points or adjacent points. In the first case, we replace $\Pi \cap \Delta_i$ by a singleton and in the second case by a 2-spire (see definition in \S8) appropriately placed. We thus obtain a projection $\Pi^*$ of a new knot $K^*$. With an appropriate choice of crossings, we obtain a minimal alternating projection $\Pi^*$  where flypes cannot take place.

Suppose that $K$ is +achiral. Then $K^*$ inherits the property of +achirality of $K$. We are therefore exactly in the situation dealt with in \S 3.2 concerning the interpretation of Key Theorem 3.1. The +achirality is visible on $\Pi^*$. There is a finite order rotation  acting on $S^2$ which leaves $D$ invariant and this rotation followed by the reflection through $S^2$ realises the +achirality of $K^*$.

We now return to the knot $K$. The same twisted rotation with the flypes realizes the +achirality of $K$. We know from Corollary 2.1 that the order, say $n$  of the rotation is equal to the order of the twisted rotation. So we just have to focus on the rotation. As such, it has two fixed points in $S^2$. Away from these fixed points the action is free.
Let us first observe that the fixed points cannot be on $\Pi$ since this implies that we would be dealing with $-$achirality. There are now two possibilities. 

\subsubsection*{First possibility} The two fixed points are in $D$. Then, the finite order rotation freely permutes the boundary components of $D$. There are no short orbits. We can modify the projection inside the discs $\Delta_i$ to obtain a new minimal projection invariant by the twisted rotation. 

\subsubsection*{Second possibility} The two fixed points are outside of $D$, and therefore are in two discs $\Delta_i$ and $\Delta_j$ which are swapped by reflection. Consider one disc, say $\Delta_i$, with its boundary $\gamma_i$.  Along $\gamma_i$, in an enough small neighbourhood, there are 4 arcs of $\Pi$, cutting transversely $\gamma_i$, and nothing else of $\Pi$. The checkerboard surface  of $\Pi$ implies that the black and white colours in the regions determined by $\Pi$ alternate as we move along $\gamma_i$. Since $\Pi$ is a projection of an achiral knot, the colours are exchanged by  symmetry. So the order of the rotation is equal to 4. Since the order of the twisted rotation is equal to the order of the rotation, the order of +achirality is equal to 4.

We conclude by observing that the two discs $\Delta_i$ and $\Delta_j$ are exchanged by reflection and form a short orbit.

\medskip
\noindent{\bf Summary.}  There are two cases where short orbits appear. Both have the order of +achirality equal to 4. One case is that a Haseman circle is invariant. The second case is when a jewel is invariant and when the centres of rotation are outside the jewel, as we have just seen. In these two cases it is easy to find examples where the +achirality is not visible on a minimal projection. In all other cases, there is no short orbit, for instance when the order of +achirality is not equal to 4. There is therefore always a minimal $+$achiral projection for them.
\end{proof}

\subsection {Kauffman-Jablan Conjecture}

Let us recall some relevant facts about the checkerboard graphs $G(\Pi)$ and $G^{\star}(\Pi)$ dual to each other. Suppose $K$ is an alternating knot. If $K$ has a minimal achiral projection $\Pi$, the graphs $G(\Pi)$ and $G^{\star}(\Pi)$ are isomorphic (see Proposition 7.4 in \cite{erquwe}). Recall that the two graphs are isomorphic for a certain minimal projection if the knot is $-$achiral since these knots satisfy Tait's conjecture. However in the case of $+$achirality,  the Dasbach-Hougardy knot is an example which has no minimal projection $\Pi$ but $G(\Pi)$ is isomorphic to $G^{\star}(\Pi)$ (see \cite{daho}). 

The Dasbach-Hougardy knot being arborescent, it motivates the Kauffman-Jablan Conjecture \cite{kaja} which we state here in the following form:

\medskip
\noindent {\bf Kauffman-Jablan Conjecture}.
{\it Let $K$ be an alternating knot which is +achiral but not $-$achiral. If $K$ has no minimal achiral projection, then $K$ is arborescent.}
\medskip

The knot $K_2$ shown in Fig. 26 is a counter-example to the conjecture. See details in Theorem 7.3.

\begin{remark}
In \cite{erquwe} Section 7, we incorrectly announced that there are knots which are counter-examples to the conjecture for each order $2^{\lambda}$ and $\lambda \geq 2$. 

In fact, the knots which are counter-examples to the Kauffman-Jablan conjecture only exist for order 4. 
\end{remark}

\subsection {+Achirality for every order $2^\lambda$}

Using the symmetry of the jewel,  constructive proof can be given but not presented here for the following result:

\begin{theorem}
For each $\lambda \!\geq\! 1$, there is an alternating (non-arborescent) +achiral knot 
$K_  \lambda$ such that:
\begin{enumerate}
\item the order of +achirality of $K_  \lambda$ is equal to $2 ^ \lambda$;
\item $K_  \lambda$ is not $-$achiral.
\item there exists a minimal  $+$achiral alternating projection of $K_\lambda$.
\end{enumerate}
\end{theorem}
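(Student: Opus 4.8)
The plan is to construct the knots $K_\lambda$ explicitly as closures of tangles built by an iterated symmetric amalgamation, and to make the twisted rotation of order $2^\lambda$ manifest from the very construction. First I would build a ``seed'' tangle $F_1$ which is $\star$-equivalent (in the sense of Notation 7.2), for instance a Montesinos or bretzel tangle of the kind analysed in Subsection 7.2, chosen so that its closure is alternating, prime, non-arborescent, and so that its Conway/HTW invariants are asymmetric enough to force non-$(-)$achirality (arguing as in Proposition 7.14, via the ``very distinct'' condition on the constituent subtangles). The non-arborescent character is arranged by requiring that the invariant piece of the Bonahon--Siebenmann decomposition be a jewel $D$ rather than a twisted band diagram; one then appeals to Section 5, where it is shown that for a jewel-invariant projection the achirality is visible on a minimal projection exactly when the short orbit sits inside $D$ and all the discs $\Delta_i$ have generic orbits. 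So the construction must be engineered precisely to put the short orbit in the jewel.

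The inductive step is the heart of the matter. Given $K_\lambda$ with its visible twisted rotation $\Psi_\lambda$ of order $2^\lambda$ on a minimal projection $\Pi_\lambda$, I would produce $K_{\lambda+1}$ by a ``doubling'' of the symmetric picture: take two copies of the essential data and amalgamate them across a new jewel whose symmetry group is cyclic of order $2^{\lambda+1}$, the new generator $\Psi_{\lambda+1}$ restricting to $\Psi_\lambda^2$ on the relevant sub-projection and cyclically permuting the $2^{\lambda+1}$ generic orbits of discs attached to $D$. Concretely this is the Filling Construction of Section 5 run in reverse: start from a jewel $D$ carrying a rotation of order $2^{\lambda+1}$ fixing no boundary component, then fill the $2^{\lambda+1}$ discs $\Delta_i$ with copies of a fixed $\star$-equivalent tangle $F$ (the same $F$ for a whole generic orbit, so no flypes are needed at the last step, exactly as in case 1) of the ``Question: Where is the short orbit?'' discussion). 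Since the orbit of each $\Delta_i$ is generic, $\Psi_{\lambda+1}^{2^{\lambda+1}}\vert\Delta_i$ is the identity and the projection is genuinely invariant; choosing over/under crossings alternating makes $\Pi_{\lambda+1}$ an alternating, hence minimal, achiral projection. One checks the order of $+$achirality is exactly $2^{\lambda+1}$ (not a proper divisor) by locating the two fixed points of $\Psi_{\lambda+1}$ inside the jewel and verifying the local rotation number there is primitive; and one preserves non-$(-)$achirality by keeping the constituent tangles ``very distinct'' so that no orientation-reversing involution of the diagram can exist.

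The main obstacle I expect is twofold. First, realizing a cyclic group of order $2^{\lambda+1}$ as a group of automorphisms of an actual jewel diagram (a polyhedral, hence non-arborescent, piece with no invariant Conway circle) requires exhibiting a concrete infinite family of jewels with the prescribed symmetry and with enough room to attach the right number of discs in one free orbit; producing such a family and verifying that the resulting closure is prime and that the piece really is a jewel in the Bonahon--Siebenmann sense (not accidentally arborescent, and with the invariant set of $\psi$ being the jewel as in case A) of Theorem 3.2) is the delicate combinatorial/topological point. Second, one must certify that the order of $+$achirality does not drop: the twisted rotation one writes down has order $2^{\lambda+1}$ on $S^2$, but one needs that no smaller power already reverses the orientation of $S^3$ while fixing the knot, which comes down to the primitivity of the rotation number at the two fixed points and is where the explicit nature of the construction must be used. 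Once these are in place, conditions 1), 2) and the visibility statement all follow from the constructions already set up in Sections 4 and 5, together with Kerékjártó's theorem identifying the diffeomorphism with a standard twisted rotation.
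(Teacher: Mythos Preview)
Your proposal has the right architecture---build a jewel with a large cyclic symmetry, fill its boundary discs along generic orbits with a fixed tangle, and let the twisted rotation of the jewel be the visible achirality---and this is exactly the scheme the paper follows. But you leave the decisive step as an acknowledged obstacle rather than a construction: you say that ``exhibiting a concrete infinite family of jewels with the prescribed symmetry \dots\ is the delicate combinatorial/topological point'' and stop there. Without that family the argument does not close, and your inductive ``doubling'' device does not supply it: you never explain how to pass from a jewel with a $C_{2^\lambda}$ action to one with a $C_{2^{\lambda+1}}$ action, and there is no obvious functorial doubling of jewels that does this while keeping the result a jewel in the Bonahon--Siebenmann sense.

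The paper dispenses with induction entirely and writes the family down in one stroke. Take the standard closed-braid projection of the torus link $K(3,2^{\lambda-1})$, surround each crossing by a small circle, and delete the interiors; the resulting planar surface $J(3,2^{\lambda-1})$ is a jewel (for $\lambda\ge 3$) whose full symmetry group is the dihedral group $D_{2^{\lambda}}$, with the cyclic subgroup $C_{2^{\lambda}}$ generated by a rotation of angle $2\pi/2^{\lambda}$ composed with a reflection across an auxiliary $2$-sphere $\Sigma^2$ meeting the projection sphere in a circle $\sigma$. One then fills each boundary disc with a tangle $G$ (outside $\sigma$) or $\widehat{G}^h$ (inside $\sigma$), where $G$ is alternating, has singleton-type boundary connections, and satisfies $G\not\sim G^v$. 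The generator of $C_{2^{\lambda}}$ is then a visible $+$achirality diffeomorphism of the resulting minimal alternating projection, while the condition $G\not\sim G^v$ kills every element of $D_{2^{\lambda}}$ that could realise $-$achirality (these are precisely the $\pi$-rotations about axes $\Delta$, which send $G$ to $G^v$). The cases $\lambda=1,2$ are handled by explicit small examples. Note that the non-$(-)$achirality criterion here is $G\not\sim G^v$, derived from the explicit list of symmetries of the turban, not the ``very distinct'' condition of Proposition~7.14, which is tailored to the arborescent suitable-knot setup and does not transfer to this polyhedral situation.
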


\begin{figure}[h]
\includegraphics[scale=0.17]{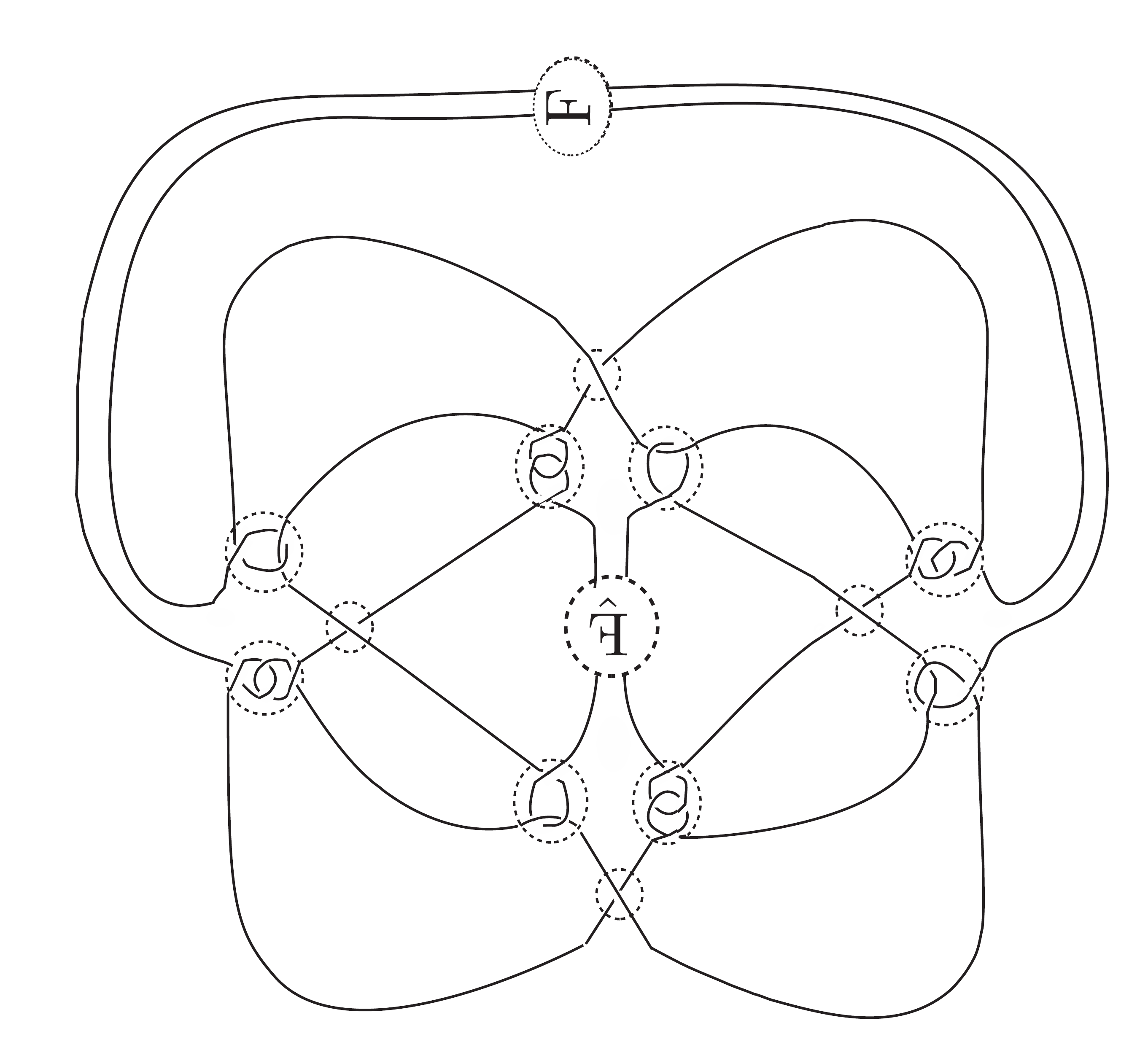}
\caption{The knot $K_2$.}
\end{figure}

\begin{figure}[h!]
\includegraphics[scale=.33]{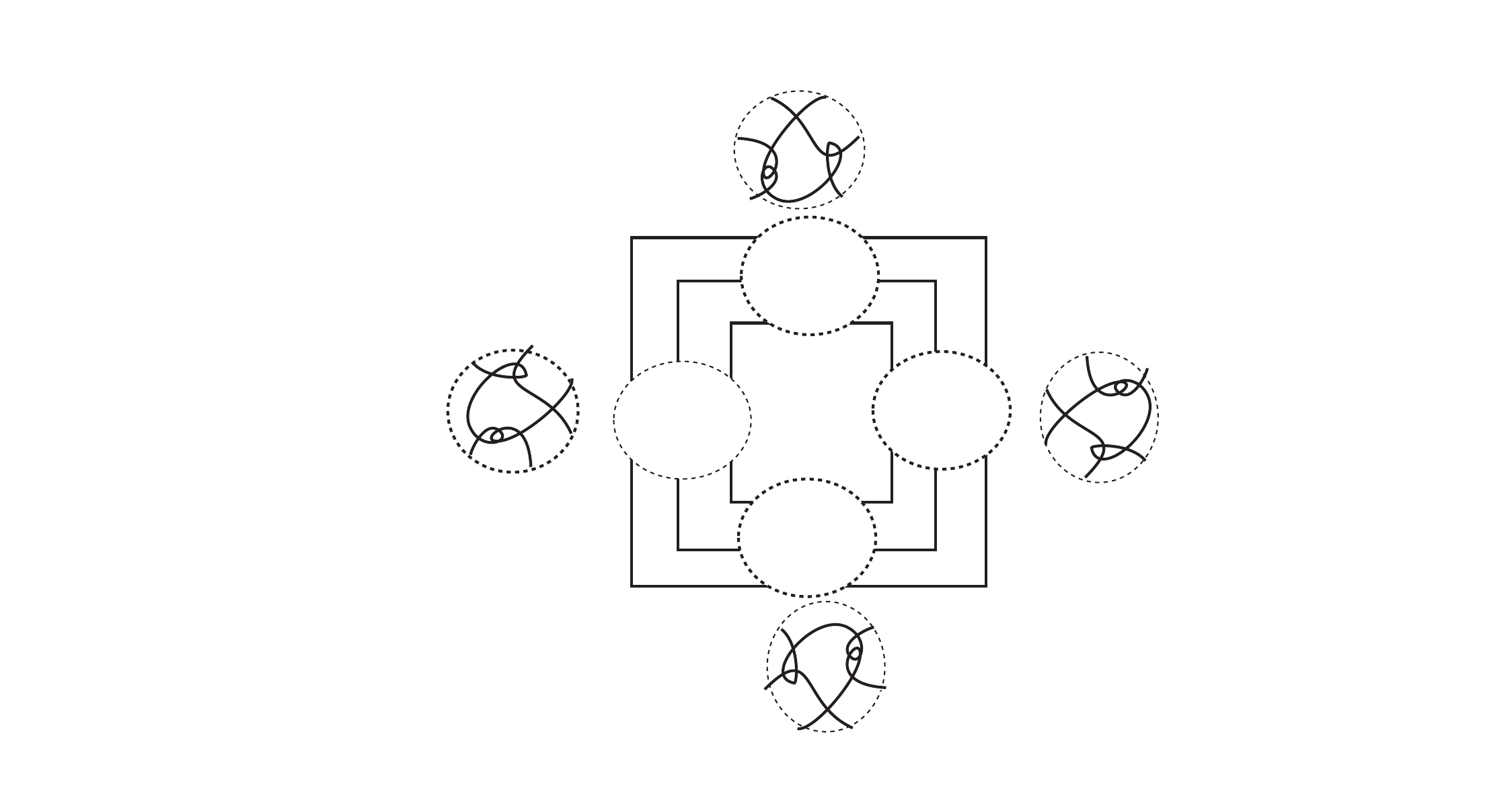}
\caption{The knot $K_1$.}
\end{figure}

The $+$achirality can also be not visible on a minimal projection with non- arborescent knots.

\begin{theorem}
There is an alternating +achiral knot $L_2$ which is not $-$achiral and is such that:
\begin{enumerate}
\item  the order of +achirality of $L_2$ is equal to  $4$,
\item the knot $L_2$ is not arborescent.
\item there is  no achiral minimal  projection of $L_2$.
\end{enumerate}
\end{theorem}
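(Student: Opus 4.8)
The plan is to exhibit an explicit non-arborescent alternating knot $L_2$ with the required properties by the turban construction of Section 8, but arranged so that the short orbit sits on a pair of exchanged discs carrying subtangles $F$ and $\tilde F$ that cannot simultaneously be straightened by flypes. First I would take the jewel $J(3,2)$ (i.e.\ $\mathrm{6}^{*}$, or more precisely a turban with $u=3$ and enough boundary circles to accommodate the tangles) together with the meandering circle $\sigma$ of Fig.~25 which is the trace of the reflection $2$-sphere $\Sigma^2$; the relevant symmetry is the twisted rotation $\varphi_C$ of order $4$ built from a quarter-turn about the axis perpendicular to the projection plane followed by the reflection through $\Sigma^2$. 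By Proposition~8.3 (the fact that the visible order on such a jewel-filling is the order of $+$achirality), once the fillings are chosen compatibly with $\varphi_C$ the knot will be $+$achiral of order exactly $4$, and non-arborescent because a turban is a jewel.

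Next I would choose the subtangles so as to force two things: (1) the knot is not $-$achiral, and (2) no minimal projection is achiral. For (1) I would reuse the mechanism of the end of Section 8: fill the holes outside the disc bounded by $\sigma$ with a tangle $G$ satisfying conditions 1) and 2) there ($G\not\sim G^v$, arcs joining opposite boundary points), and the holes inside with $\widehat G^{\,h}$, so that by Tait's Conjecture (\cite{erquwe}) plus the explicit list of jewel diffeomorphisms capable of producing $-$achirality, $L_2$ is not $-$achiral. For (2) I would invoke the dichotomy of Section 5: since $L_2$ is polyhedral and, by the placement of $\varphi_C$, the short orbit of $\varphi_C$ is carried by two discs $\Delta,\tilde\Delta$ containing tangles $F,\tilde F$ exchanged by the achirality diffeomorphism, $L_2$ satisfies the \textbf{F-property}. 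Then one engineers $F$ so that $\psi^2(F)=F^{*}$ forces a genuine flype (as in the Dasbach--Hougardy phenomenon, Remark~4.7 and the Order~4 Theorem), i.e.\ $F$ is $\star$-equivalent but not equal to $F^{*}$, so that the quarter-turn cannot be realized on \emph{any} minimal projection; combined with the non-$-$achirality this means $L_2$ has no achiral minimal projection at all. Concretely this is the knot $K_2$ pictured in Fig.~28 alluded to at the end of Section~5.

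The verification then splits into routine diagrammatic checks: that the chosen fillings yield an alternating diagram (analysis of black/white regions near $\partial G$ and $\partial\widehat G^{\,h}$, as at the end of Section~8), that the result is a knot and not a link (condition 1) on $G$, read off from the singleton model), that $\varphi_C$ indeed realizes $+$achirality of order $4$ (symmetry analysis of the turban as in the $\lambda\geq 3$ case with $\lambda$ specialized to $2$), and that the chosen $F$ is $\star$-equivalent but that its minimal form is not $\star$-invariant (a finite check on the flype graph of $F$, analogous to the Dasbach--Hougardy tangle of Fig.~23). The main obstacle is this last point: one must exhibit a concrete polyhedral tangle $F$ for which $F\sim F^{*}$ yet no flype-representative of $F$ equals $F^{*}$, and simultaneously ensure the global diagram stays alternating, prime, a knot, and not $-$achiral --- balancing all these constraints at once is the delicate part, and I expect it to be handled, as in the paper's style, by displaying the explicit diagram of Fig.~28 and checking the properties on it rather than by an abstract argument.
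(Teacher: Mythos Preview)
Your approach is essentially the paper's own: build $K_2$ on a turban-type jewel so the knot is polyhedral, fill with tangles $G\not\sim G^v$ to kill $-$achirality, and insert a pair $F,\widehat F$ on the short orbit with $F\sim F^*$ but no minimal representative satisfying $F'=(F')^*$. The paper organizes it slightly differently---first defining $K_1$ (the knot with $F,\widehat F$ removed) as a polyhedral knot built from four ``large'' $6$-tangles on the base $A(3,2)$, checking its symmetries, and then reinserting $F,\widehat F$---but the content is the same.

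Two small corrections. First, $J(3,2)$ is not $6^*$; the paper identifies $6^*$ with $J(3,3)$. Second, and more substantively, the tangle $F$ you need is \emph{not} polyhedral, and finding it is not the delicate part: the paper simply takes $F=P(1,2)$, a rational tangle, which trivially satisfies $F\sim F^*$ (any rational tangle does) while having no minimal flype-representative equal to its own $*$. The genuine work is entirely in the surrounding jewel-with-$6$-tangles structure that makes the knot polyhedral, $+$achiral of order $4$, and not $-$achiral; once that scaffold is in place, the obstruction to a minimal achiral projection comes for free from the smallest asymmetric rational insert.
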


\begin{proof}

First, consider the alternating knot $K_2$ shown in Fig. 26.

\medskip
\noindent{\bf Claim.} $K_2$ satisfies Theorem  7.1. 
Define $K_1$ the knot from $K_2$ by ``deleting" the tangles  $F$ and $\widehat {F}$. Then $K_1$ is a polyhedral alternating knot with four 6-tangles as described in Fig. 27 (a {\bf 6-tangle} is a pair $(\Sigma,\Pi \cap \Sigma )$ where $\Sigma$ is a disc and $\Pi$ is the knot projection such that  $\Pi \cap  \partial\Sigma$ is $6$ points).

It is easy to realise that $K_1$ is $+$achiral  with an homeomorphism $g$ of order 4. Let us denote one of these four 6-tangles by $G$. Since $G \nsim G^v$, $K_1$ is not $-$achiral.

Let us return to the knot $K_2$. For the tangles $\rm F$ and $\rm F^*$  of $K_2$, $g$ acts as follows: $g(\rm F) = \widehat{\rm F}$ and $g^2(\rm F) = {\rm F^*}$. So $g$ induces a $+$achirality of $K_2$ if and only if $\rm F$ satisfies the $*$-condition.
The choice of $\rm F$ under the conditions
\begin{enumerate}
\item
 $ \rm F \sim  \rm F^*$ and
\item
there is no minimal projection $ \rm F' \sim F$ such that $\rm F' \equiv \rm F'^*$.
\end{enumerate}
gives rise to a knot $K_2$ which illustrates Theorem 7.3.
The simplest tangle that satisfies the above conditions is $P(1, 2)$.
Then the knot $K_2$ is non-arborescent, +achiral of order 4 without minimal $+$achiral projection.
\end{proof}

\section{Appendix: Canonical decomposition of a projection}

In the first three subsections, we do not assume that the link projections are alternating.

\subsection{Diagrams}

\begin{definition}
A {\bf planar surface} $\Sigma$ is a compact connected surface embedded in the 2-sphere $S^2$. We denote by $k+1$ the number of connected components of the boundary $\partial \Sigma$ of $\Sigma$. 
\end{definition}
We consider finite graphs $\Gamma$ embedded in $\Sigma$ which satisfy the following four conditions:
\begin{enumerate}
\item vertices of $\Gamma$ have valency 1 or 4,
\item let $\partial\Gamma$ be the set of vertices of $\Gamma$ of valency 1. Then $\Gamma$ is properly embedded in $\Sigma$, i.e. $\partial \Sigma \cap \Gamma = \partial \Gamma$,
\item the number of vertices of $\Gamma$ contained in each connected component of $\partial \Sigma$ is equal to 4,
\item a vertex of $\Gamma$ of valency 4 is called a {\bf crossing}. We require that at each crossing an upper thread and a lower thread be chosen. We denote by $c$ the number of crossings.
\end{enumerate}

\begin{definition}
The pair $D = (\Sigma , \Gamma)$ is called a {\bf diagram}.
\end{definition}

\begin{definition}
 A {\bf singleton} is a diagram homeomorphic to Fig. 28. 
\end{definition}

\begin{figure}[h]
\includegraphics[scale=0.20]{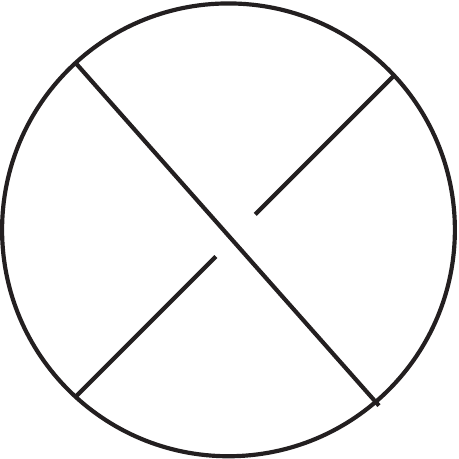}
\caption{A singleton.}
\end{figure}

The (signed) weight  of a crossing on a band is defined according to Fig. 29.

\begin{figure}[h]
\includegraphics[scale=0.40]{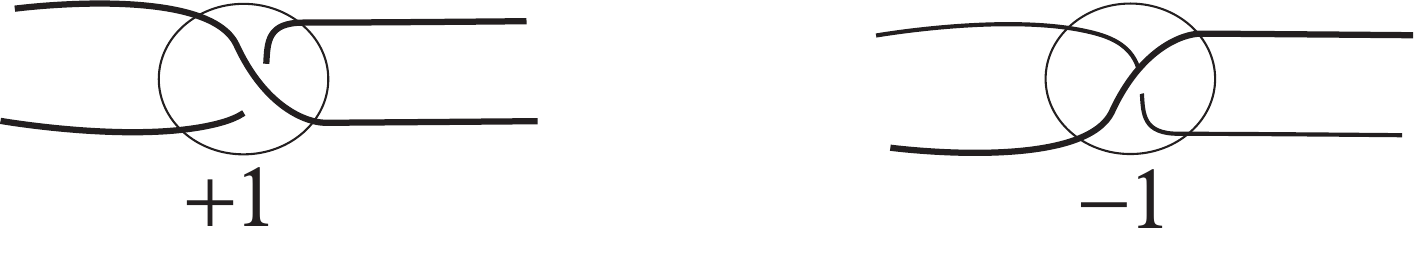}
\caption{The weight of a crossing on a band.}
\end{figure}
{\it First hypothesis.} Crossings along the same band have the same sign. In other words we assume that a Reidemeister move of type II cannot be applied to reduce the number of crossings along a band.

\begin{figure}
\includegraphics[scale=0.30]{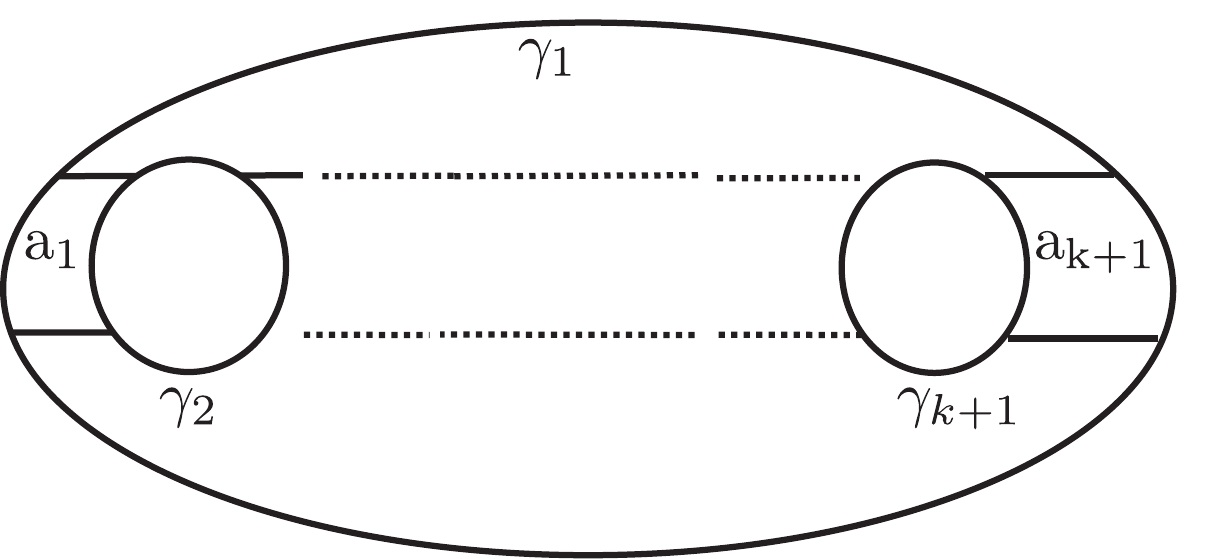}
\caption{A twisted band diagram.}
\end{figure}

\begin{definition}
 A {\bf  twisted band diagram} is a diagram homeomorphic to Fig. 30.
\end{definition}

In  Fig. 30 the boundary components of $\Sigma$ are denoted by $\gamma_1 , \dots , \gamma_{k+1}$ where $k+1 \geq 1$.  $\vert a_i \vert$ denotes the number of crossings between $\gamma_{i-1}$ and $\gamma_i$. The sign of $a_i$ is the sign of the crossings. The integer $a_i$ will be called an {\bf intermediate weight}.

If $k+1 = 1$, the twisted band diagram $ (\Sigma , \Gamma)$ (i.e $\Sigma$ is a disc) is called a {\bf spire}. 

If $k+1 = 2$, the twisted band diagram is a {\bf twisted annulus}. 

{\it Second hypothesis.} 
For a spire, we require that $\vert a_1 \vert \geq 2$. 
For a twisted annulus, we require that $a_1+a_2 \neq 0$.

\begin{remark} Using flypes and Reidemeister II moves, we can reduce the number of crossings of a twisted band diagram in such a way that either $a_i \geq 0$ for all $i = 1 , \dots , k+1$ or $a_i \leq 0$ for all $i = 1 , \dots , k+1$.
This reduction process is not quite canonical, but any two diagrams thus reduced are equivalent by flypes. This is sufficient for our purposes.
\end{remark}

{\it Third hypothesis.} We assume that in a twisted band diagram, all non-zero $a_i$  have the same sign.

{\bf Notation.} The sum of the $a_i$ is called the {\bf weight} of the twisted band diagram and is denoted by $a$. If $k+1\geq 3$, we can have $a = 0$.

\subsection{Haseman circles}
\begin{definition}
 If $k+1\geq 3$ we can have $a = 0$ if:
\begin{itemize}
\item[i)] $\gamma$ bounds a disc $\Delta$ in $\Sigma$.
\item[ii)] There exists a properly embedded arc $\alpha \subset \Delta$ such that $\alpha \cap \Gamma = \emptyset$ and $\alpha$ is not boundary parallel. The arc $\alpha$ is called a {\bf compressing arc} for $\gamma$.
\end{itemize}
\end{definition}

{\it Fourth hypothesis.} Haseman circles are incompressible.

Two Haseman circles are said to be {\bf parallel} if they bound an annulus $A \subset \Sigma$ such that the pair $(A , A \cap \Gamma)$ is diffeomorphic to Fig. 31.

\begin{figure}[h]
\includegraphics[scale=0.15]{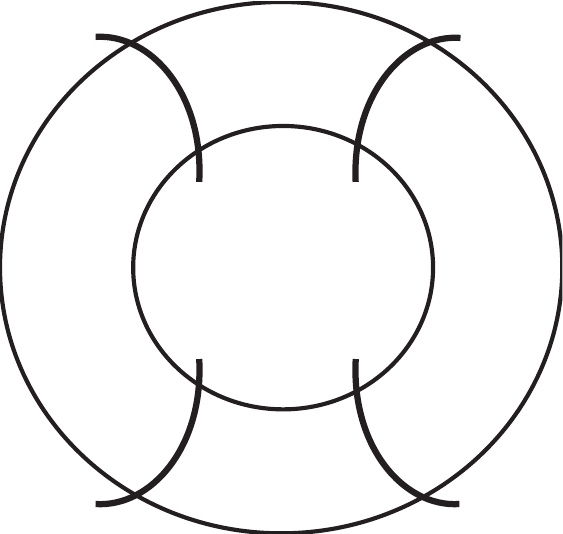}
\caption{Parallel Haseman circles.}
\end{figure}

Analogously, we define a Haseman circle $\gamma$ to be {\bf boundary parallel} if there exists an annulus $A \subset \Sigma$ such that:
\begin{itemize}
\item[1)] the boundary $\partial A$ of $A$ is the disjoint union of $\gamma$ and a boundary component of $\Sigma$;
\item[2)] $(A , A \cap \Gamma)$ is diffeomorphic to Fig. 31.
\end{itemize}
\begin{figure}
\includegraphics[scale=0.3]{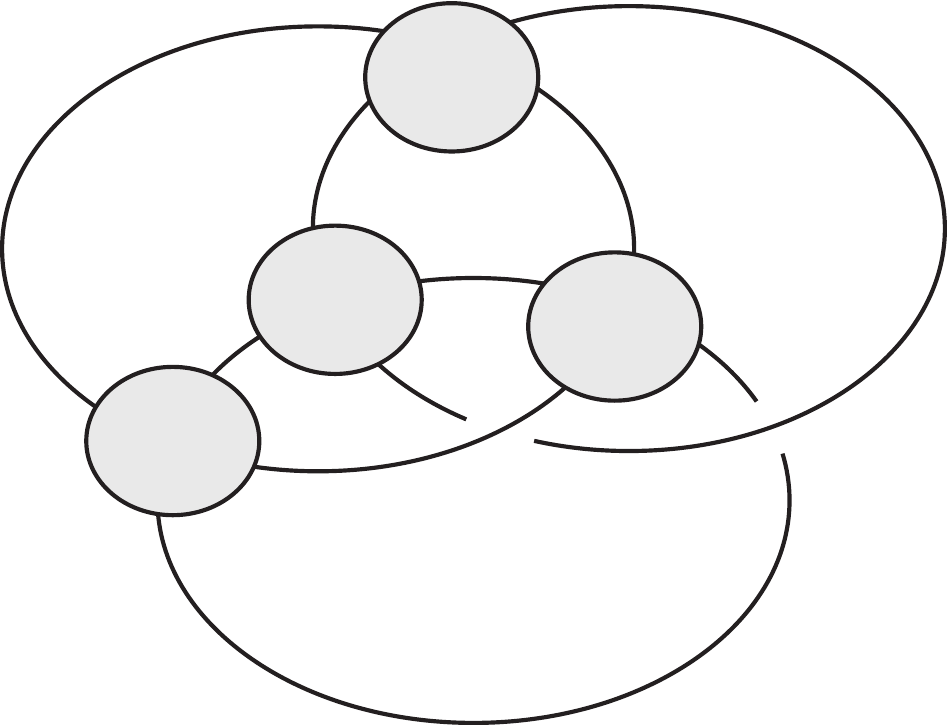}
\caption{A jewel.}
\end{figure}

\begin{definition}
 A {\bf jewel} is a diagram that satisfies the following four conditions:
\begin{itemize}
\item[a)] it is not a singleton.
\item[b)] it is not a twisted band diagram with $k+1 = 2$ and $a = \pm 1$.
\item[c)] it is not a twisted band diagram with $k+1 = 3$ and $a = 0$.
\item[d)] every Haseman circle in $\Sigma$ is either boundary parallel or bounds a singleton.
\end{itemize}
\end{definition}

\noindent {\bf Comments.}
\begin{enumerate}
\item The diagrams listed in a), b) and c) satisfy condition d) but we do not want them to be jewels. Accordingly, a jewel is neither a singleton nor a twisted band diagram. 

\begin{figure}[h]
\includegraphics[scale=0.6]{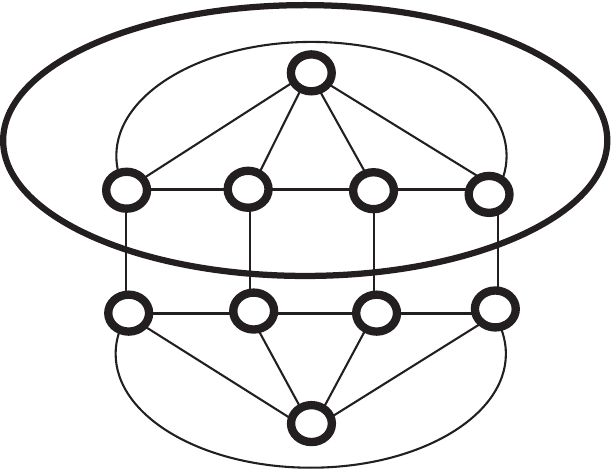}
\caption{10*** is a sum tangle of two 6*.}
\end{figure}

\item

It is necessary to make a comparison between jewels (as defined in Definition 9.7) and the basic Conway polyhedra. Our notion of jewel is more restrictive than the notion of basic polyhedron since a jewel is a diagram with each Haseman circle trivial. For Conway (and others) a basic polyhedron is ``lune-free", where a lune is a portion of a diagram with two edges connecting the same two vertices. In other words, in a basic polyhedron every vertex is connected to four different ones. Therefore a basic polyhedron can be a sum tangle of several jewels. Thus, it can contain non-trivial Haseman circles.  Typically, the basic polyhedron $10^{***}$ is a sum tangle of two $6^*$.
\end{enumerate}

\subsection{Families of Haseman circles of a projection}

\begin{definition}
 A {\bf link projection} $\Pi$ (also called projection for short) is a diagram in $\Sigma = S^2$. 
\end{definition}

{\it Fifth hypothesis.} The projections we consider are connected and prime.

\begin{definition} 
 Let $\Pi$ be a link projection. A {\bf family of Haseman circles} for $\Pi$ is a set of Haseman circles satisfying the following conditions:
\begin{itemize}
\item[1.] any two circles are disjoint.
\item[2.] no two circles are parallel.
\end{itemize}
\end{definition}

Note that a family is always finite, since a projection has a finite number of crossings.

Let $\mathcal{\rm H} = \{ \gamma_1 , \dots , \gamma_n \}$ be a family of Haseman circles for $\Pi$. Let $R$ be the closure of a connected component of $S^2 \setminus  \bigcup_{i=1}^{i=n} \gamma_i$. We call the pair $(R , R \cap \Gamma)$ a {\bf diagram} of $\Pi$ determined by the family $\mathcal{H}$.

\begin{definition}
 A family $\mathcal{C}$ of Haseman circles is an {\bf admissible family} if each diagram determined by it is either a twisted band diagram or a jewel. An admissible family is {\bf minimal} if the deletion of any circle transforms it into a family that is not admissible. 
\end{definition}

The following theorem is the main structure theorem on link projections proved in \cite{quwe}. It is mainly due to Bonahon and Siebenmann. 

\begin{theorem}[Existence and uniqueness theorem of minimal admissible families] Let $\Pi$ be a link projection in $S^2$. Then:
\begin{itemize}
 \item[i)] there exist minimal admissible families for $\Pi$.
\item[ii)] any two minimal admissible families are isotopic, by an isotopy which respects $\Pi$.
\end{itemize}
\end{theorem}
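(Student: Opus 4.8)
The plan is to recognize Theorem~9.13 as the combinatorial form of the Bonahon--Siebenmann (JSJ-type) decomposition and to argue as in \cite{quwe}: get an admissible family by a greedy enlargement that must terminate, prune it to a minimal one, and obtain uniqueness by putting two minimal admissible families in minimal position and then showing that, once disjoint, every circle of one is parallel to a circle of the other.

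\textbf{Existence.} To each finite family of pairwise disjoint, pairwise non‑parallel, incompressible Haseman circles attach a complexity, for instance the multiset of crossing numbers of the diagrams it determines, ordered lexicographically in decreasing order. Start from the empty family. As long as some diagram $R$ of the current decomposition is neither a twisted band diagram nor a jewel, condition (d) of Definition~9.7 must fail for $R$ (the diagrams excluded by (a),(b),(c) are themselves singletons or twisted band diagrams, hence already of the required type), so $R$ contains an incompressible Haseman circle $\gamma$ which is neither boundary parallel nor bounds a singleton. Adding $\gamma$ splits $R$ into two diagrams, each with strictly fewer crossings than $R$: incompressibility (Fourth Hypothesis) forbids $\gamma$ from cutting off a crossingless disc, and the fact that $\gamma$ does not bound a singleton forbids the degenerate splittings, so the complexity strictly decreases. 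The construction therefore terminates at an admissible family; deleting superfluous circles one at a time — the family being finite — yields a minimal admissible family, proving (i).

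\textbf{Uniqueness.} Given two minimal admissible families $\mathcal{C}_1,\mathcal{C}_2$, isotope them, respecting $\Pi$, to meet transversally with the least possible number of intersection points, and suppose they still meet. Choosing an innermost disc $D$ cut off inside a diagram $R$ of the $\mathcal{C}_1$-decomposition by an arc $a$ of some $\gamma\in\mathcal{C}_2$ and an arc of $bR$, the position of the endpoints of $a$ relative to the four points $\Gamma\cap bR$ forces one of two alternatives: either $a$ is parallel into $bR$, so an isotopy of $\gamma$ across $D$ removes at least two intersection points, contradicting minimality of position; or $a$ is a compressing arc for one of the incompressible circles $bR$ or $\gamma$, contradicting the Fourth Hypothesis. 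Hence $\mathcal{C}_1$ and $\mathcal{C}_2$ may be taken disjoint. Now for $\gamma\in\mathcal{C}_2$ let $R$ be the $\mathcal{C}_1$-diagram containing it. If $R$ is a jewel, $\gamma$ is boundary parallel, hence parallel to a circle of $\mathcal{C}_1$, or bounds a singleton; in the latter case the $\mathcal{C}_2$-diagram on the small side of $\gamma$ is itself a singleton (a singleton contains no incompressible Haseman circle), so deleting $\gamma$ from $\mathcal{C}_2$ merges this singleton harmlessly into the adjacent $\mathcal{C}_2$-diagram and keeps the family admissible, contradicting minimality of $\mathcal{C}_2$. If $R$ is a twisted band diagram, the classification of incompressible Haseman circles inside a twist leaves three possibilities: $\gamma$ is boundary parallel (done); $\gamma$ bounds a singleton (same contradiction); or $\gamma$ is parallel to an internal twist circle of $R$, in which case each of the two $\mathcal{C}_2$-diagrams meeting $\gamma$ inherits a twist of $R$ along $\gamma$, hence is a twisted band diagram, and their union along $\gamma$ concatenates these twists — the two signs agreeing by the First Hypothesis on $\Pi$ — into a twisted band diagram, so $\mathcal{C}_2$ is not minimal, a contradiction. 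Thus every circle of $\mathcal{C}_2$ is parallel to a circle of $\mathcal{C}_1$, and by symmetry conversely; matching parallel circles provides the required $\Pi$-respecting isotopy, proving (ii).

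\textbf{Main obstacle.} The existence half is routine once the complexity is chosen; the real work is the uniqueness argument, and within it the two structural facts that a minimal admissible family contains no singleton-bounding circle and no circle separating two twisted band diagrams, together with the classification of incompressible Haseman circles inside a twisted band diagram and inside a jewel. Since all of this is carried out in \cite{quwe} (and is, as noted, essentially due to Bonahon and Siebenmann), in the present paper one simply cites that reference.
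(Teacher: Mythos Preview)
Your proposal is correct and matches the paper's treatment: the paper does not prove this theorem at all but merely states it and refers to \cite{quwe} (attributing it to Bonahon--Siebenmann), exactly as you conclude in your final paragraph. Your sketch of the existence/uniqueness argument is the standard JSJ-type one carried out in that reference, so there is nothing to add.
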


\begin{definition}
The minimal admissible family will be called the {\bf canonical Conway family} for $\Pi$ and denoted by $\mathcal{C}_{can}$. The decomposition of $\Pi$ into twisted band diagrams and jewels determined by $\mathcal{C}_{can}$ will be called the {\bf canonical decomposition} of $\Pi$.
\end{definition}
An element of the canonical Conway family is called a {\bf canonical Conway circle} and it can be of $3$ types:
\begin{itemize}
\item[(1)] a circle that separates two jewels.
\item[(2)] a circle that separates two twisted band diagrams.
\item[(3)] a circle that separates a jewel and a twisted band diagram.
\end{itemize}
\begin{figure}[h!]
\includegraphics[scale=0.6]{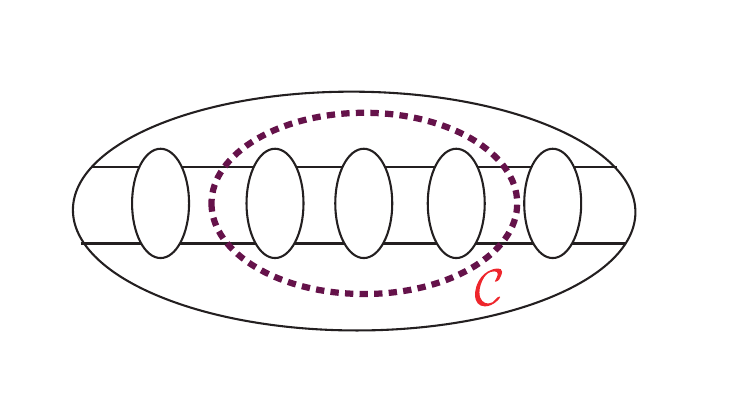}\vspace{-1em}
\caption{$C$ is not a canonical Conway circle.}
\end{figure}
\begin{example} The circle $\cal C$ in Fig. 34 (in dotted lines) is not a canonical Conway circle.
\end{example}

It can happen that $\mathcal{C}_{can}$ is empty. The following proposition tells us when this happens.

\begin{proposition} Let $\Pi$ be a link projection. Then $\mathcal{C}_{can} = \emptyset$ if and only if $\Pi$ is either a jewel with an empty boundary (i.e., $v = 0$) or the minimal projection of the torus knot/link of type $(2 , m)$.
\end{proposition}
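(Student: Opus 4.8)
The plan is to reduce the statement to an unwinding of the definition of an admissible family, using the Existence and Uniqueness Theorem for minimal admissible families as the only real input.

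First, I would observe that $\mathcal{C}_{can} = \emptyset$ if and only if the empty family $\emptyset$ is admissible. One direction is trivial; for the other, note that $\emptyset$ is vacuously minimal among admissible families (there is no circle to delete), so if it is admissible it is a minimal admissible family, and by the uniqueness part of the Existence and Uniqueness Theorem every minimal admissible family is isotopic to $\emptyset$ --- hence equal to $\emptyset$ --- so $\mathcal{C}_{can} = \emptyset$. Then I would note that the family $\emptyset$ determines a single diagram, the closure of the unique component of $S^2 \setminus \emptyset$, namely $(S^2,\Gamma) = \Pi$ itself; thus $\emptyset$ is admissible precisely when $\Pi$, regarded as a diagram on $S^2$, is a twisted band diagram or a jewel.

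Next, I would classify the diagrams on $S^2$ of these two kinds. Since $bS^2 = \emptyset$, such a diagram has $v = 0$ boundary components, so if $\Pi$ is a jewel it is a jewel with empty boundary, one of the two asserted cases; conversely any jewel with $v = 0$ is a diagram on $S^2$, so the empty family is admissible and $\mathcal{C}_{can} = \emptyset$. If instead $\Pi$ is a twisted band diagram with empty boundary, it is the closure of a single twist running once around $S^2$; the first and third hypotheses force all of its crossing points to have one common sign, so no Reidemeister~II reduction applies, and by the fifth hypothesis the projection is prime, so it has at least two crossings. Hence $\Pi$ is the minimal projection of the torus knot or link $T(2,m)$, with $|m|$ its crossing number; and conversely the minimal projection of $T(2,m)$ is exactly such a closed twist, a twisted band diagram with $v = 0$, so once more $\mathcal{C}_{can} = \emptyset$. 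Putting the two steps together yields the equivalence in both directions.

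I expect the only point requiring real care to be the identification, in the second step, of a twisted band diagram with empty boundary with the minimal projection of some $T(2,m)$ --- in particular the verification that such a diagram is already \emph{minimal}, which is precisely where the first and third hypotheses are used (they forbid cancelling adjacent crossings of opposite sign). It is also worth recording, as already noted in the Comments following the definition of a jewel, that a jewel is never a twisted band diagram, so the two cases in the statement are genuine alternatives and no projection is counted twice.
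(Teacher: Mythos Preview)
The paper does not supply a formal proof of this proposition; it states it and follows with a Comment recording the two identifications (a jewel with empty boundary is an indecomposable Conway polyhedron, and the minimal projection of $T(2,m)$ ``can be considered as a twisted band diagram with $v=0$''). Your argument is correct and is exactly the natural unpacking of that Comment: $\mathcal{C}_{can}=\emptyset$ forces the whole of $(S^2,\Gamma)$ to be a single admissible diagram, and you then identify the two possible diagram types on $S^2$ with the two cases in the statement.
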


\noindent {\bf Comment.} A jewel with an empty boundary is nothing other than a basic polyhedron in J the sense of John Conway which is indecomposable in the sense of the sum tangle. The minimal projection of the torus link of type $(2 , m)$, can be considered as a twisted band diagram with $v = 0$.

\begin{definition}
The {\bf arborescent part} of a graph $\Gamma \subset S^2$ is the union of the twisted band diagrams determined by the canonical Conway family.

The {\bf polyhedral part} of $\Gamma \subset S^2$ is the union of the jewels determined by the canonical Conway family. 

A knot is {\bf arborescent} if it has a projection such that all diagrams determined by the canonical Conway family are twisted band diagrams.
\end{definition}

\begin{remark} The adjective ``arborescent" (or equivalently ``algebraic") has several meanings in the literature. We have adopted the more restrictive one based on 2-dimensional diagrams. Due to their 3-dimensional perspective, Bonahon-Siebenmann have a more permissive definition. For example, Conway showed that some diagrams based on his basic polyhedron (= jewel) $6^*$, which are polyhedral in our sense, can be turned into algebraic diagrams in his sense by adding some more crossings. Typically the knot $10_{99}$ in Rolfsen's notations is such a knot. Note that this knot is +achiral, and also $-$achiral. The two symmetries can easily be seen on a $6^*$ projection.
\end{remark}

\subsection{Structure tree  ${\cal{A}} (K)$}

Now we assume that knots and links are alternating.

\medskip
\noindent {\bf Construction of ${\cal{A}} (K)$.}
Let $K$ be an alternating link and let $\Pi$ be a minimal projection of $K$. Let $\mathcal{C}_{can}$ be the canonical Conway family for $\Pi$. We construct the tree ${\cal{A}} (K)$ as follows. Its vertices are in bijection with the diagrams determined by $\mathcal{C}_{can}$. Its edges are in bijection with the Haseman circles of $\mathcal{C}_{can}$. The extremities of an edge (associated to a Haseman circle $\gamma$) are the vertices which represent the two diagrams containing $\gamma$ in their boundary. Since the diagrams are planar surfaces of a decomposition of the 2-sphere $S^2$ and that $S^2$ has genus zero, the constructed graph is a tree. This tree is ``abstract", i.e., it is not embedded in the plane. 

We have two kinds of vertices: $B-vertices$ and $J-vertices$ of  ${\cal{A}} (K)$; if the vertex represents a twisted band diagram, we label it with the letter $B$ and by the weight $a$ and if the vertex represents a jewel, we label it with the letter $J$. 

\begin{remark}\ 
\begin{enumerate}
\item The tree ${\cal{A}} (K)$ is independent of the minimal projection chosen to represent $K$. This is an immediate consequence of the Flyping Theorem. Indeed, as we have seen,  the flypes  modify the decomposition of the weight $a$ of a twisted band diagram as sum of intermediate weights, but the sum remains constant. A flype also modifies  the way diagrams are embedded in $S^2$. Since the tree is abstract, a flype has no effect on it (\cite{quwe} ,Section 6). This is why we call it the {\bf structure tree} of $K$ (and not of $\Pi$). 

\item  ${\cal{A}} (K)$ contains some informations about the decomposition of $S^2$ into diagrams determined by $\mathcal{C}_{can}$ but we cannot reconstruct the decomposition from it. However one can do better if no jewels are present. In this case the link (and its minimal projections) are called {\bf arborescent} by Bonahon-Siebenmann. They produce a planar tree that actually encodes a given arborescent projection. See \cite{bosi} for details.

\item  We do not encode the orientation of $K$ in ${\cal{A}} (K)$.
\end{enumerate}
\end{remark}

\section{Summary}

We summarize some of the main results proved in the paper which are related to the visibility of the +achirality of alternating knots according to the order of +achiraity which is equal to $2^{\lambda}$ with $\lambda \geq 1$.
For any exponent $\lambda \geq 1$, there exist +achiral knots (see \S 7.3). 

\renewcommand{\arraystretch}{1.8}
\begin{center}
\begin{tabular}{ |c|c|c| }

\hline

Order of $+$achirality $2^\lambda$ & Minimal achiral projection  & Achiral projection \\ 


\hline

$\lambda=2$ arborescent & No in general & Yes (Theorem 5.2) \\

\hline

$\lambda=2$ non-arborescent & \multicolumn{2}{ |c| }{The status in not known in the general case} \\ 

\hline

$\lambda \neq 2$ & \multicolumn{2}{ |c| }{Yes (Theorem 7.2)} \\ 

\hline

\end{tabular}
\end{center}

\address{Universit\'e de Gen\`eve, Section de Math\'ematiques\\
2-4, rue du Li\`evre, CH-1211 Geneva 64, Switzerland\\
\email{nicola.ermotti@edu.ge.ch}\\
\email{cam.quach@unige.ch}\\
\email{claude.weber@unige.ch}\\

\end{document}